\def\english{\selectlanguage{english}}
\providecommand\mathbb{\bf}
\newcommand\R{{\mathbb R}}
\newcommand\N{{\mathbb N}}
\newtheorem{thm}{Theorem}[section]
\newtheorem{pro}{Proposition}[section]
\newtheorem{remark}{Remark}[section]
\newcounter{Remark}
\renewcommand\theRemark{\arabic{Remark}}
\newcounter{steps}
\newenvironment{proof}[1][]{%
\par\medbreak\setcounter{steps}{0}
{\noindent\bfseries Proof#1. }} {\hfill\fbox{\ }\medbreak}
\newcounter{substeps}[steps]
\newcommand{\olx}[0]{
\overline{x}}
\newcommand{\olv}[0]{
\overline{v}}
\newcommand{\polv}[0]{
{^\bot \overline{v}}}
\newcommand{\olX}[0]{
\overline{X}}
\newcommand{\olV}[0]{
\overline{V}}
\newcommand{\calr}[0]{
{\mathcal R}}
\newcommand{\cale}[0]{
{\mathcal E}}
\newcommand{\oc}[0]{
\omega_c}
\newcommand{\avepq}[2]{
\left \langle#1, #2\right \rangle_{P,Q}}
\newcommand{\intty}[1]{
\int _0 ^{+\infty}\!\!\!\!\int _{\R ^m} \!\!#1 \;\mathrm{d}y\mathrm{d}t}
\newcommand{\inttz}[1]{
\int _0 ^{+\infty}\!\!\!\!\int _{\R ^m} \!\!#1 \;\mathrm{d}z\mathrm{d}t}
\newcommand{\inttTy}[1]{
\int _0 ^{T}\!\!\!\!\int _{\R ^m} \!\!#1 \;\mathrm{d}y\mathrm{d}t}
\newcommand{\inty}[1]{
\int _{\R^m}\!\!#1 \;\mathrm{d}y}
\newcommand{\intz}[1]{
\int _{\R^m}\!\!#1 \;\mathrm{d}z}
\newcommand{\uin}[0]{
u^{\mathrm{in}}}
\newcommand{\dom}[0]{
\mathrm{dom}}
\newcommand{\cdny}[0]{
\cdot \nabla _y }
\newcommand{\cdnz}[0]{
\cdot \nabla _z }
\newcommand{\dyb}[0]{
\partial _y b}
\newcommand{\ysy}[0]{
Y(s;y)}
\newcommand{\supp}[0]{
\mathrm{supp\;}}
\newcommand{\eps}[0]{
\varepsilon}
\newcommand{\epsk}[0]{
{\varepsilon _k}}
\newcommand{\tepsk}[0]{
{\tilde{\varepsilon} _k}}
\newcommand{\supe}[0]{
\sup _{\varepsilon >0}}
\newcommand{\lime}[0]{
\lim _{\varepsilon \searrow 0 }}
\newcommand{\ue}[0]{
u ^\varepsilon}
\newcommand{\tue}[0]{
\tilde{u} ^\varepsilon}
\newcommand{\tueo}[0]{
\tilde{u} ^1_\varepsilon}
\newcommand{\ve}[0]{
v ^\varepsilon}
\newcommand{\fe}[0]{
f ^\varepsilon}
\newcommand{\Phie}[0]{
\Phi ^\varepsilon}
\newcommand{\psie}[0]{
\psi ^\varepsilon}
\newcommand{\md}[0]{
\mathrm{d}}
\newcommand{\vek}[0]{
v ^{\varepsilon _k}}
\newcommand{\limk}[0]{
\lim _{k \to +\infty}}
\newcommand{\divy}[0]{
\mathrm{div}_y}
\newcommand{\divz}[0]{
\mathrm{div}_z}
\newcommand{\loloc}[0]{
L^1_{\mathrm{loc}}(\R^m)}
\newcommand{\ltloc}[0]{
L^2_{\mathrm{loc}}(\R^m)}
\newcommand{\dpri}[0]{
{\cal D}^{\;\prime}(\R^m)}
\newcommand{\ran}[0]{
\mathrm{Range\;}}
\newcommand{\lty}[0]{
L^2(\R ^m)}
\newcommand{\liy}[0]{
L^\infty(\R ^m)}
\newcommand{\lilocy}[0]{
L^\infty_{\mathrm{loc}}(\R ^m)}
\newcommand{\bg}[0]{
b \cdot \nabla _y  }
\newcommand{\litlty}[0]{
{L^\infty (\R_+; L^2 (\R^m))}}
\newcommand{\lotlty}[0]{
{L^1 (\R_+; L^2 (\R^m))}}
\newcommand{\ave}[1]{
\left \langle #1 \right \rangle }
\newcommand{\ytz}[0]{
Y\!\left ( t/\varepsilon; z \right )}
\newcommand{\yt}[0]{
Y\!\left ( t/\varepsilon; \cdot \right )}
\newcommand{\ysz}[0]{
Y\!\left ( s; z \right )}
\newcommand{\ymty}[0]{
Y\left ( -t/\varepsilon; y\right )}
\newcommand{\ymsy}[0]{
Y\left ( -s; y\right )}
\newcommand{\ymsysz}[0]{
Y\left ( - s;Y\left ( s;z\right) \right)}
\newcommand{\dymtz}[0]{
\partial Y ^{-1} (t/\varepsilon; z)}
\newcommand{\dymsz}[0]{
\partial Y^{-1}(s;z)}
\newcommand{\limS}[0]{
\lim _{S \to +\infty}}
\newcommand{\ymsys}[0]{
Y(-s; Y(s; \cdot))}
\newcommand{\ys}[0]{
Y(s; \cdot)}
\newcommand{\ymt}[0]{
Y(-t/\varepsilon; \cdot)}
\newcommand{\proj}[1]{
\mathrm{Proj}_{#1}}
\newcommand{\litloclty}[0]{
L^\infty_{\mathrm{loc}}(\R_+;L^2(\R^m))}
\newcommand{\litlocltz}[0]{
L^\infty_{\mathrm{loc}}(\R_+;L^2(\R^m))}
\newcommand{\nry}[0]{
\,\nabla _y ^R}
\newcommand{\nrz}[0]{
\,\nabla _z ^R}
\newcommand{\lttlocxp}[0]{
L^2_{\mathrm{loc}}(\R_+;X_P)}
\newcommand{\lttlocltz}[0]{
L^2_{\mathrm{loc}}(\R_+;L^2(\R^m))}
\newcommand{\lotlocltz}[0]{
L^1_{\mathrm{loc}}(\R_+;L^2(\R^m))}
\newcommand{\calO}[0]{
\mathcal{O}}
\newcommand{\calB}[0]{
\mathcal{B}}
\newcommand{\calOs}[0]{
\mathcal{O}(s; \cdot)}
\newcommand{\dys}[0]{
\partial Y(s; \cdot)}
\newcommand{\dyms}[0]{
\partial Y ^{-1} (s; \cdot)}
\newcommand{\intS}[1]{
\int_0 ^S #1 \;\mathrm{d}s}
\newcommand{\lttxp}[0]{
L^2 (\R_+;X_P)}
\newcommand{\Tde}[0]{
{T_{\delta, \eps}}}
\newcommand{\kde}[0]{
{k_{\delta, \eps}}}
\newcommand{\inttT}[1]{
\int_0^T #1 \;\mathrm{d}t}
\begin{document}
\english

\title{Asymptotic analysis of parabolic equations with stiff transport terms by a multi-scale approach}

\author{Thomas BLANC \thanks{Aix Marseille Universit\'e, CNRS, Centrale Marseille, Institut de Math\'ematiques de Marseille, UMR 7373, Ch\^ateau Gombert 39 rue F. Joliot Curie, 13453 Marseille 
FRANCE. E-mail : {\tt thomas.blanc@univ-amu.fr}}\;,\; Miha\"i BOSTAN \thanks{Aix Marseille Universit\'e, CNRS, Centrale Marseille, Institut de Math\'ematiques de Marseille, UMR 7373, Ch\^ateau Gombert 39 rue F. Joliot Curie, 13453 Marseille FRANCE. E-mail : {\tt mihai.bostan@univ-amu.fr}}\;,\; Franck BOYER
\thanks{Universit\'e Toulouse 3 - Paul Sabatier, CNRS, Institut de Math\'ematiques de Toulouse, UMR 5219, Route de Narbonne 31062 Toulouse Cedex 9
FRANCE. E-mail : {\tt franck.boyer@math.univ-toulouse.fr}}
}

\date{ (\today)}

\maketitle

\begin{abstract}
We perform the asymptotic analysis of parabolic equations with stiff transport terms. This kind of problem occurs, for example, in collisional gyrokinetic theory for tokamak plasmas, where the velocity diffusion of the collision mechanism is dominated by the velocity advection along the Laplace force corresponding to a strong magnetic field. This work appeal to the filtering techniques. Removing the fast oscillations associated to the singular transport operator, leads to a stable family of profiles. The limit profile comes by averaging with respect to the fast time variable, and still satisfies a parabolic model, whose diffusion matrix is completely characterized in terms of the original diffusion matrix and the stiff transport operator. Introducing first order correctors allows us to obtain strong convergence results, for general initial conditions (not necessarily well prepared).
\end{abstract}

\paragraph{Keywords:} Average operators, Ergodic means, Unitary groups, Multiple scales, Homogenization.

\paragraph{AMS classification:} 35Q75, 78A35

\section{Introduction}
\label{intro}

In many applications we deal with disparate scales. The solutions of the problems in hand fluctuate at very different scales and for the moment, solving numerically for both slow and fast scales seems out of reach. Depending on the particular regimes we are interested on, it could be worth to solve with respect to the slow variable, after smoothing out the fast oscillations. In this work we focus on parabolic models perturbed by stiff transport operators

\begin{equation}
\label{Equ1}
\left\{
\begin{array}{ll}
\partial _t \ue - \divy (D(y) \nabla _y \ue ) + \displaystyle\frac{1}{\eps}\; b(y) \cdot \nabla _y \ue = 0,& (t,y) \in \R_+ \times \R^m\\
\ue (0,y) = \uin (y), & y \in \R^m.
\end{array}
\right.
\end{equation}
Here $b : \R^m \to \R^m$ and $D : \R ^m \to \mathcal{M}_m (\R)$ are given fields of vectors and symmetric matrices, and $\eps >0$ is a small parameter destinated to converge to $0$. If the vector field $b$ is divergence free, the energy balance writes
\[
\frac{1}{2}\frac{\md}{\md t} \inty{(\ue (t,y))^2} + \inty{D(y) \nabla _y \ue \cdot \nabla _y \ue } = 0.
\]
Therefore, when the matrices $D(y)$ are positive, the $L^2$ norms of the solutions $(\ue)_{\eps >0}$ decrease in time, and we expect that the limit model still behaves like a parabolic one, whose diffusion matrix field is to be determined. This work is motivated by the study of collisional models for the gyrokinetic theory in tokamak plasmas. The fluctuations of the presence density of charged particles are due to the transport in space and velocity (under the action of  electro-magnetic fields), but also to the collision mechanisms. In the framework of the magnetic confinement fusion, the external magnetic fields are very large, leading to a stiff velocity advection, due to the magnetic force $q v \wedge B^\eps = q v \wedge \frac{B}{\eps}$. Here $q$ stands for the particle charge and $B^\eps = \frac{B}{\eps}$ represents a strong magnetic field, when $\eps$ goes to $0$. Using a Fokker-Planck operator for taking into account the collisions between particles, we are led to the Fokker-Planck equation
\begin{equation}
\label{EquFP}
\partial _t \fe + v \cdot \nabla _x \fe + \frac{q}{m} \left ( E + v \wedge \frac{B}{\eps} \right ) \cdot \nabla _v \fe = \nu \;\mathrm{div}_v ( \Theta \nabla _v \fe + v \fe  ),\;\;(t,x,v) \in \R_+ \times \R^3 \times \R^3
\end{equation}
where $E$ is the electric field, $m$ is the particle mass, $\nu$ is the collision frequency and $\Theta$ is the temperature. The asymptotic analysis of \eqref{EquFP}, when neglecting the collisions is now well understood \cite{BosTraEquSin, FreSon98, FreSon01, GolSai99}. It can be handled by averaging the perturbed model along the characteristic flow associated to the dominant transport operator. Recently, models including collisions have been analyzed formally by using the averaging method \cite{BosCalQAM1, BosCalQAM2}. In particular, it was emphasized that, averaging with respect to the fast cyclotronic motion leads to diffusion not only in velocity, but also with respect to the perpendicular space directions, see \eqref{EquFPDiffusion}. The study of the averaged diffusion matrix field is crucial when determining the equilibria of the limit Fokker-Planck equation \eqref{EquFP}, when $\eps$ goes to zero. Numerical results concerning strongly anisotropic elliptic and parabolic problems were obtained in \cite{DegDelNeg09, FilNegYan12, CroKuhLat15}.

This work concentrates on the asymptotic analysis for the parabolic models in \eqref{Equ1}. We expect that part of these arguments applies to other perturbed models, for example in the framework of strongly anisotropic parabolic models, which will be studied in future works. Our paper is organized as follows. The main results are introduced in Section \ref{MainRes}. We indicate the main lines of our arguments, performing formal computations. In Section \ref{AveOpe} we present a brief overview on the construction of the average operators for matrix fields. Section \ref{UnifEst} is devoted to uniform estimates, in view of convergence results. In Section \ref{2scale} we establish two-scale convergence results, in the ergodic setting, which allows us to handle situations with non periodic fast variables. Up to our knowledge, these results have not been reported yet. The proofs of the main theorems are detailed in Section \ref{Proofs}. Some technical arguments are presented in Appendix \ref{A}.

\section{Presentation of the main results and formal approach}
\label{MainRes}

The subject matter of this paper concentrates on the asymptotic analysis of \eqref{Equ1}, when $\eps$ becomes small. Obviously, the fast time oscillations come through the large advection field $\frac{b(y)}{\eps} \cdot \nabla _y$. Indeed, think that when neglecting the diffusion operator, the problem \eqref{Equ1} reduces to a transport model, whose solution writes
\begin{equation}
\label{Equ3}
\ue (t,y) = \uin (\ymty ),\;\;(t, y) \in \R_+ \times \R^m.
\end{equation}
Here $(s,y) \in \R \times \R^m \to Y(s;y) \in \R^m$ stands for the characteristic flow of $\bg$
\begin{equation*}
\label{Equ4} \frac{\md Y}{\md s} = b(\ysy),\;\;(s,y) \in \R \times \R^m,\;\;Y(0;y) = y,\;\;y \in \R^m.
\end{equation*}
This flow is well defined under standard smoothness assumptions
\begin{equation}
\label{Equ15} b \in W^{1,\infty}_{\mathrm{loc}} (\R^m),\;\;\divy b = 0
\end{equation}              
and
\begin{equation}
\label{Equ17}
\exists C >0\;\mbox{such that } |b(y) | \leq C ( 1 + |y|),\;\;y \in \R^m.
\end{equation}    
Under the above hypotheses the flow $Y$ is global and smooth, $Y\in W^{1,\infty}_{\mathrm{loc}}(\R \times \R^m)$. Moreover, since the field $b$ is divergence free, the transformation $y \in \R^m \to \ysy \in \R^m$ is measure preserving for any $s \in \R$. Motivated by \eqref{Equ3}, we introduce the new unknowns
\begin{equation}
\label{EquNewUnknown}
\ve (t,z) = \ue (t, \ytz),\;\;(t,z) \in \R_+ \times \R^m,\;\;\eps >0
\end{equation}
and we expect to get stability for the family $(\ve)_{\eps >0}$, when $\eps$ goes to $0$. In that case we will deduce that, for small $\eps >0$, $\ue$ behaves like $v(t, \ymty)$, for some profile $v = \lime \ve$, that is, $\ue$ appears as the composition product between a stable profile and the fast oscillating flow $\ymty$. We prove mainly two strong convergence results for general initial conditions (not necessarily well prepared), whose simplified versions are stated below. For detailed assertions see Theorems \ref{MainRes1}, \ref{MainRes2}.

\medskip
\noindent
{\bf Theorem}
{\it 
We denote by $(\ue)_{\eps>0}$ the variational solutions of \eqref{Equ1} and by $(\ve)_{\eps>0}$ the functions
\[
\ve (t,z) = \ue (t, \ytz),\;\;(t,z) \in \R_+ \times \R^m,\;\;\eps >0.
\]
\begin{enumerate}
\item
Under suitable hypotheses on the vector field $b$, the matrix field $D$ and the initial condition $\uin$, the family $(\ve)_{\eps>0}$ converges strongly in $\litloclty$ to the unique variational solution $v \in \litlty$ of \eqref{Equ14}, whose diffusion matrix field $\ave{D}$ comes by averaging the matrix field $D$ along the flow of the vector field $b$ (cf. Theorem \ref{AveMatField}). 

\item
Under more regularity hypotheses, we have 
\[
\ue (t, \cdot) = v(t, \ymt) + \mathcal{O}(\eps) \mbox{ in } \litloclty
\]
that is, for any $T \in \R_+$, there is a constant $C_T$ such that 
\[
\sup _{t \in [0,T]}\|\ue (t, \cdot) - v(t, \ymt)\|_{\lty} \leq C_T \eps.
\]
\end{enumerate}
}
\noindent
The problem satisfied by $\ve$ is obtained by performing the change of variable $y = \ytz$ in \eqref{Equ1}. A straightforward  computation based on the chain rule leads to (see Remark \ref{ChainRuleDiff}) 
\[
\partial _t \ve (t,z) = \partial _t \ue (t, \ytz) + \frac{1}{\eps}\; b(\ytz) \cdot (\nabla _y \ue)(t, \ytz)
\]
and
\[
\divz \{ \dymtz D (\ytz) \;^t \dymtz \nabla _z \ve \} = \{\divy ( D(y) \nabla _y \ue ) \}(t, \ytz)
\]
and therefore \eqref{Equ1} becomes
\begin{equation}
\label{Equ8}
\left \{
\begin{array}{lll}
\partial _t \ve - \divz \left ( (G(t/\eps) D )\nabla _z \ve \right ) = 0,& (t,z) \in \R_+ \times \R^m\\
\ve (0,z) = \ue (0,z) = \uin (z), & z \in \R^m, \;\eps >0
\end{array}
\right.
\end{equation}
where $(G(s)D)_{s \in \R}$ is the family of matrix fields given by
\begin{align}
\label{EquG}
(G(s)D)(z) & = \dymsz D(\ysz) \;^t \dymsz \\
& = \partial \ymsysz D(\ysz) \;^t \partial \ymsysz,\;\;(s, z) \in \R \times \R^m.\nonumber 
\end{align}
The new diffusion problem \eqref{Equ8} seems simpler than the original problem \eqref{Equ1}, because the singular term $\frac{1}{\eps}\; \bg$ has disappeared. Nevertheless, the new model depends on a fast time variable $s = t/\eps$, through the diffusion matrix field $G(s= t/\eps)D$, and a slow time variable $t$. We deal with a two-scale problem in time. As often in asymptotic analysis of multiple scale problems, a way to understand the behavior of the solutions $(\ve)_{\eps >0}$ when $\eps$ goes to $0$ and to identify the limit problem is to use a formal development whose terms depend both on the slow and fast time variables
\begin{equation}
\label{Equ10}
\ve (t,z) = v(t,  t/\eps, z) + \eps v^1 (t,  t/\eps, z) + ... .
\end{equation}
This method is used in many frameworks such as periodic homogenization for elliptic and parabolic systems \cite{Allaire, Guetseng}, transport equations \cite{Bos15, Dal08} or kinetic equations \cite{Bos12}. Plugging the Ansatz \eqref{Equ10} in \eqref{Equ8} and identifying the terms of the same order with respect to $\eps$, lead to the hierarchy of equations
\begin{equation}
\label{Equ11}
\partial _s v = 0
\end{equation}
\begin{equation}
\label{Equ12}
\partial _t v - \divz (G(s) D \nabla _z v ) + \partial _s v^1 = 0
\end{equation}
\[
\vdots
\]
Equation \eqref{Equ11} says that the first profile $v$ does not depend on the fast time variable $s$, that is $v = v(t,z)$. We expect that $v$ is the limit of the family $(\ve)_{\eps >0}$, when $\eps$ goes to $0$. The slow time evolution of $v$ is given by \eqref{Equ12}, but we need to eliminate the second profile $v^1$. Actually $v^1$ appears as a Lagrange multiplier which guarantees that at any time $t$, the profile $v$ satisfies the constraint $\partial _s v = 0$. In the periodic case, we eliminate $v^1$ by taking the average over one period. More general, we appeal to ergodic average and we write
\begin{equation}
\label{EquLimMod}
\left \{
\begin{array}{lll}
\partial _t v - \divz\left \{ \left ( \displaystyle \lim _{S\to +\infty} \frac{1}{S} \int _0 ^S G(s) D \;\md s \right )\nabla _z v\right \} = 0,& \;\;(t,z) \in \R_+ \times \R^m \\
v(0,z) = \uin (z), & \;\; z \in \R^m.
\end{array}
\right.
\end{equation}
The key point is that $(G(s))_{s\in \R}$ is a $C^0$-group of unitary operators (on some Hilbert space to be determined), and thanks to von Neumann's ergodic mean theorem \cite{ReedSimon}, the limit $\ave{D} = \lim _{S\to +\infty} \frac{1}{S} \int _0 ^S G(s) D \;\md s$ makes sense. The Hilbert space which realizes $(G(s))_{s\in \R}$ as a $C^0$-group of unitary operators appears as a $L^2$ weighted space, with respect to some field of symmetric definite positive matrices. We assume that there is a matrix field $P$ such that 
\begin{equation}
\label{Equ32Bis}
^t P = P,\;\;P(y)\xi \cdot \xi >0,\;\;\xi \in \R^m \setminus \{0\},\;\;y \in \R^m,\;\;P^{-1}, P \in L^2_{\mathrm{loc}}(\R^m)
\end{equation}
\begin{equation}
\label{Equ33} [b,P] := (b \cdot \nabla _y )P - \partial _y b P - P {^t \partial _y b} = 0,\;\;\mbox{ in } \dpri{}.
\end{equation}
For example, when the vector field $b$ is uniform, we can take $P = I_m$. Notice that we have the following characterization for \eqref{Equ33} cf. Proposition 3.8 \cite{BosAnisoDiff}
\begin{pro}
\label{CaracBraMat} 
Consider $b \in W^{1,\infty}_{\mathrm{loc}} (\R^m)$ (not necessarily divergence free) with at most linear growth at infinity and $A(y) \in \loloc{}$. Then $[b,A] = 0$ in $\dpri{}$ iff
\begin{equation*}
\label{Equ34} A(\ysy) = \partial \ysy A(y) \;{^t \partial \ysy},\;\;s\in \R,\;\;y \in \R^m.
\end{equation*}
\end{pro}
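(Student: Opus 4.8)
The identity to be proven is equivalent to the statement that, for every $s\in\R$, the matrix field
\[
B(s;z):=(\partial Y(s;z))^{-1}\,A(Y(s;z))\,{}^{t}(\partial Y(s;z))^{-1}
\]
coincides with $A$; this is well defined in $\loloc$ since $\partial Y^{\pm1}\in L^\infty_{\mathrm{loc}}$ and $y\mapsto Y(s;y)$ is bi-Lipschitz with positive Jacobian bounded above and below on compacts, and one has $(\partial Y(s;z))^{-1}=\partial Y(-s;Y(s;z))$ and $B(0;\cdot)=A$. So the claim reads: $[b,A]=0$ in $\dpri$ if and only if $s\mapsto B(s;\cdot)$ is constant. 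The formal reason is a one-line computation: differentiating the linearized flow relation $\frac{\md}{\md s}\partial Y(s;z)=\partial_y b(Y(s;z))\,\partial Y(s;z)$ (so $\frac{\md}{\md s}(\partial Y(s;z))^{-1}=-(\partial Y(s;z))^{-1}\partial_y b(Y(s;z))$) together with the chain rule $\frac{\md}{\md s}A(Y(s;z))=((\bg)A)(Y(s;z))$ gives
\[
\frac{\md}{\md s}B(s;z)=(\partial Y(s;z))^{-1}\,[b,A](Y(s;z))\,{}^{t}(\partial Y(s;z))^{-1}.
\]
Hence $[b,A]=0$ forces $B$ to be constant, while conversely the constancy of $B$, differentiated at $s=0$ where $\partial Y=I_m$, returns $[b,A]=0$. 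The whole point is to make this rigorous for $b\in W^{1,\infty}_{\mathrm{loc}}$ and $A\in\loloc$.

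The implication ``identity $\Rightarrow[b,A]=0$'' is the easy one: assuming $A(Y(s;y))=\partial Y(s;y)A(y){}^{t}\partial Y(s;y)$, one tests against $\Psi\in C^{\infty}_{c}(\R^m;\mathcal{M}_m(\R))$ and differentiates in $s$ at $s=0$. After the Lipschitz change of variables $y=Y(-s;z)$, both pairings $\langle A(Y(s;\cdot)),\Psi\rangle$ and $\langle\partial Y(s;\cdot)A\,{}^{t}\partial Y(s;\cdot),\Psi\rangle$ become integrals of $A$ against kernels depending on $\partial Y(\mp s;\cdot)$, $Y(-s;\cdot)$ and $\det\partial Y(-s;\cdot)$, which are Lipschitz in $s$ with $z$-support compact locally uniformly in $s$; differentiating under the integral at $s=0$ is then legitimate because $A\in\loloc$, and one reads off $(\bg)A=\partial_y b\,A+A\,{}^{t}\partial_y b$ in $\dpri$, i.e.\ $[b,A]=0$.

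The delicate direction is ``$[b,A]=0\Rightarrow$ identity'': when $b$ is only Lipschitz, $\partial Y(s;\cdot)$ is merely $L^\infty_{\mathrm{loc}}$, so the ``flowed'' test fields that would appear if one integrated the displayed identity against $\Psi$ are not admissible in a distributional pairing; this is the main obstacle, which I would circumvent by regularization. Put $A_\delta:=A*\rho_\delta\in C^\infty$; since $\partial_y b\in L^\infty_{\mathrm{loc}}$ and $A\in\loloc$, the DiPerna--Lions commutator lemma gives $[b,A_\delta]=([b,A])*\rho_\delta+r_\delta=r_\delta$ with $r_\delta\to0$ in $\loloc$. For the smooth field $A_\delta$ the computation above is rigorous — the only point needing care, the $\md/\md s$-differentiability of $\partial Y(s;z)$, holds for a.e.\ $z$ since $s\mapsto\partial Y(s;z)$ solves the linearized ODE with $L^\infty$ coefficient and is therefore locally Lipschitz in $s$ — and integrating in $s$ yields, for a.e.\ $z$,
\[
B_\delta(s;z)-A_\delta(z)=\int_0^s(\partial Y(\tau;z))^{-1}\,r_\delta(Y(\tau;z))\,{}^{t}(\partial Y(\tau;z))^{-1}\,\md\tau ,
\]
where $B_\delta$ is defined like $B$ but with $A_\delta$. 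Letting $\delta\to0$ at fixed $s$, the left-hand side converges to $B(s;\cdot)-A$ in $\loloc$ (use $A_\delta\to A$ in $\loloc$, the bi-Lipschitz change of variables, and the local $L^\infty$ bounds on $\partial Y^{\pm1}$), while the right-hand side tends to $0$ in $\loloc$ (use $r_\delta\to0$ in $\loloc$ together with the local bi-Lipschitz bounds on $Y(\tau;\cdot)$ and the $L^\infty$ bounds on $\partial Y^{\pm1}$, uniform for $\tau$ in compact intervals). Hence $B(s;\cdot)=A$ for every $s$, which is the asserted identity. Throughout, the argument rests on the classical properties of the flow of a $W^{1,\infty}_{\mathrm{loc}}$ vector field with at most linear growth: global existence, a.e.\ spatial differentiability with $\partial Y$ the fundamental solution of the linearized system, the Lipschitz change-of-variables formula, and stability of the flow under mollification of $b$.
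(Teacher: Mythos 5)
The paper does not prove this proposition; it is quoted as Proposition~3.8 of \cite{BosAnisoDiff}, so there is no in-text argument to compare against. Your self-contained proof is correct. The reduction to showing that $s\mapsto B(s;\cdot)=\partial \ymsys A(\ys)\,{}^t\partial\ymsys$ is constant, the formal computation $\frac{\md}{\md s}B(s;z)=(\partial Y(s;z))^{-1}[b,A](\ysz)\,{}^t(\partial Y(s;z))^{-1}$, the easy direction by change of variables and dominated convergence, and the hard direction by mollifying $A$, identifying $[b,A*\rho_\delta]$ as a DiPerna--Lions commutator plus innocuous zeroth-order errors, integrating in $s$ and passing to the limit with the locally bi-Lipschitz change of variables, are all sound. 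Two technical inputs are invoked but not established: (i) for $b\in W^{1,\infty}_{\mathrm{loc}}$ one needs that for a.e.\ $y$ the map $s\mapsto\partial Y(s;y)$ is an absolutely continuous solution of the linearized system $\frac{\md}{\md s}\partial Y=\partial_y b(Y)\partial Y$ — Rademacher alone gives existence of $\partial_y Y(s;\cdot)$ for each fixed $s$ but not the ODE, so the standard route is to mollify $b$ and pass to the limit through stability of the flow; (ii) a Fubini argument is implicitly used to guarantee that, for a.e.\ $z$, the trajectory $Y(\cdot;z)$ spends only a null set of times in the null set where $\partial_y b$ is undefined, which works because $Y(s;\cdot)$ maps null sets to null sets. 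You acknowledge these in your closing sentence, so this is a matter of completeness, not a logical gap. One could also observe that for $b\in W^{1,\infty}_{\mathrm{loc}}$ the full DiPerna--Lions apparatus is a bit more machinery than necessary, since the commutator estimate in $L^1_{\mathrm{loc}}$ is elementary when $\nabla b\in L^\infty_{\mathrm{loc}}$ and $A\in L^1_{\mathrm{loc}}$; invoking the lemma is of course still correct.
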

Given a matrix field $P$ satisfying \eqref{Equ32Bis}, \eqref{Equ33}, we consider the set of matrix fields
\[
H_{Q}=\left\{ A:\R^m \to \mathcal{M}_m(\R)\;\text{measurable}\;: \;Q^{1/2}AQ^{1/2} \in L^2 \right\}
\]
where $Q=P^{-1}$, and the scalar product on $H_{Q}$
\[
(A,\,B)_{Q}=\inty{Q^{1/2}(y)\,A(y)\,Q^{1/2}(y) : Q^{1/2}(y)\,B(y)\,Q^{1/2}(y)} = \inty{QA:BQ}
\]
for any $A, B \in H_Q$. For any two matrices in $\mathcal{M}_m (\R)$, the notation $A:B$ stands for $\mathrm{tr}(^tAB)$. 
Notice that the application $J : H_Q \to L^2(\R^m; \mathcal{M}_m(\R))$, given by
$J(A) = Q^{1/2}\,A\,Q^{1/2}, A \in H_Q$ is an isometry, implying that $(H_Q, (\cdot, \cdot))$ is a Hilbert space. We prove that the family of applications $G(s) : H_Q \to H_Q, s \in \R$, is a $C^0$-group of unitary operators on $H_Q$ cf. Proposition \ref{CZGroup}. Thanks to Theorem \ref{vonNeumann} (see \cite{ReedSimon} for more details), the average of a matrix field $\ave{A} := \lim _{S\to +\infty} \frac{1}{S} \int _0 ^S G(s)A\;\md s$ is well defined and coincides with the orthogonal projection on $\{B \in H_Q\;:\;G(s)B = B\;\mbox{for any } s\in \R\}$. 
\begin{thm}
\label{AveMatField}
Assume that \eqref{Equ15}, \eqref{Equ17}, \eqref{Equ32Bis}, \eqref{Equ33} hold true. We denote by $L$ the infinitesimal generator of the group $(G(s))_{s\in \R}$.
\begin{enumerate}
\item
For any matrix field $A \in H_Q$ we have the strong convergence in $H_Q$
\[
\ave{A} :=\limS \frac{1}{S} \int_r ^{r+S} \partial  \ymsys A(\ys) \;{^t \partial } \ymsys\;\md s = \proj{\ker L}A
\]
uniformly with respect to $r \in \R$. 
\item
If $A\in H_Q$ is a field of symmetric positive matrices, then so is $\ave{A}$. 
\item
If $A \in H_Q$ and there is $\alpha >0$ such that 
\[
Q^{1/2} (y) A(y) Q^{1/2}(y)\geq \alpha I_m,\;\;y \in \R^m
\]
therefore we have
\[
Q^{1/2} (y) \ave{A}(y) Q^{1/2}(y)\geq \alpha I_m,\;\;y \in \R^m
\]
and in particular, $\ave{A}(y)$ is definite positive for $y \in \R^m$. 
\item
If $A \in H_Q \cap H_Q ^\infty$ (see \eqref{HQinf} for the definition of the Banach space $H_Q^\infty$), then $\ave{A} \in H_Q \cap H_Q ^\infty$ and
\[
|\ave{A} |_Q \leq |A|_Q,\;\;|\ave{A}|_{H_Q ^\infty} \leq |A|_{H_Q^\infty}.
\]
\end{enumerate}
\end{thm}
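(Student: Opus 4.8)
The plan is to deduce the four items from von Neumann's ergodic mean theorem (Theorem~\ref{vonNeumann}) applied to the $C^0$-group of unitary operators $(G(s))_{s\in\R}$ on $H_Q$ (Proposition~\ref{CZGroup}), supplemented by one pointwise identity that realizes the action of $G(s)$, in the $L^2$ picture furnished by the isometry $J$, as an orthogonal conjugation composed with the measure-preserving pull-back along the flow. For item~1 I would first recall that $\ker L=\{B\in H_Q:\ G(s)B=B\ \text{for all}\ s\in\R\}$ and that, by~\eqref{EquG}, the integrand appearing in the statement is exactly $(G(s)A)(\cdot)$. Theorem~\ref{vonNeumann} then gives $\frac1S\int_0^S G(s)A\,\md s\to\proj{\ker L}A$ in $H_Q$. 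To make this uniform in $r$, I would use the group law $G(r+\sigma)=G(r)G(\sigma)$ together with the substitution $s=r+\sigma$ to write $\frac1S\int_r^{r+S}G(s)A\,\md s=G(r)\bigl(\frac1S\int_0^S G(\sigma)A\,\md\sigma\bigr)$; since $\proj{\ker L}A$ is a fixed point of $G(r)$ and $G(r)$ is an isometry of $H_Q$, the distance of this average to $\proj{\ker L}A$ equals the same distance for $r=0$, which tends to $0$ independently of $r$. This also identifies $\ave{A}=\proj{\ker L}A$.

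The heart of the matter, and the step I expect to be the only genuinely computational one, is a pointwise conjugation identity (essentially the computation behind Proposition~\ref{CZGroup}). Fixing $s$ and writing $N_s(y):=\partial Y(-s;Y(s;y))=(\partial Y(s;y))^{-1}$, one has $(G(s)A)(y)=N_s(y)\,A(Y(s;y))\,{}^tN_s(y)$ by~\eqref{EquG}. Applying Proposition~\ref{CaracBraMat} to $P$, which satisfies~\eqref{Equ33}, and inverting the resulting relation $P(Y(s;y))=\partial Y(s;y)\,P(y)\,{}^t\partial Y(s;y)$, one obtains $Q(Y(s;y))={}^tN_s(y)\,Q(y)\,N_s(y)$. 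Hence the invertible matrix $M:=Q^{1/2}(y)\,N_s(y)$ satisfies ${}^tM M=Q(Y(s;y))$, so its polar decomposition reads $M=W_s(y)\,Q^{1/2}(Y(s;y))$ with $W_s(y)$ orthogonal, and therefore
\[
Q^{1/2}(y)\,(G(s)A)(y)\,Q^{1/2}(y)=W_s(y)\,\bigl(Q^{1/2}(Y(s;y))\,A(Y(s;y))\,Q^{1/2}(Y(s;y))\bigr)\,{}^tW_s(y).
\]
As $W_s(y)$ is orthogonal and $y\mapsto Y(s;y)$ preserves Lebesgue measure, this identity shows that $G(s)$ preserves, almost everywhere, the symmetry of $Q^{1/2}AQ^{1/2}$, every lower bound $Q^{1/2}AQ^{1/2}\ge\alpha I_m$, and the value of the operator norm $\|Q^{1/2}AQ^{1/2}\|$.

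Items~2--4 then follow by a soft stability argument. For $\alpha\ge0$ let $\mathcal C_\alpha$ be the set of $A\in H_Q$ for which $Q^{1/2}AQ^{1/2}$ is symmetric and $\ge\alpha I_m$ almost everywhere, and for $R>0$ let $\mathcal B_R$ be the set of $A\in H_Q$ for which $\|Q^{1/2}AQ^{1/2}\|\le R$ almost everywhere. Each of these sets is convex and closed in $H_Q$ (an $H_Q$-convergent sequence has an a.e.\ convergent subsequence, along which the pointwise matrix conditions pass to the limit), and each is invariant under every $G(s)$ by the identity above. Since $s\mapsto G(s)A$ is continuous into $H_Q$, the Cesàro average $\frac1S\int_0^S G(s)A\,\md s$ is an $H_Q$-limit of Riemann sums, i.e.\ of convex combinations of the vectors $G(s_k)A$; hence it belongs to any closed convex $G$-invariant set containing $A$, and letting $S\to+\infty$ so does $\ave{A}$. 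Choosing $\alpha=0$ gives item~2 (together with the elementary equivalence: $A$ symmetric positive iff $Q^{1/2}AQ^{1/2}$ symmetric positive); choosing $\alpha>0$ yields $Q^{1/2}\ave{A}Q^{1/2}\ge\alpha I_m$ a.e., hence $\ave{A}\ge\alpha P>0$ a.e., which is item~3; and choosing $R=|A|_{H_Q^\infty}$ yields $|\ave{A}|_{H_Q^\infty}\le|A|_{H_Q^\infty}$. Finally, $|\ave{A}|_Q=|\proj{\ker L}A|_Q\le|A|_Q$ because orthogonal projections are contractions, and $\ave{A}\in H_Q$ by construction, so $\ave{A}\in H_Q\cap H_Q^\infty$, which completes item~4.

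In summary, the only obstacle is the pointwise conjugation identity of the second paragraph: tracking the flow's Jacobians correctly and isolating the orthogonal factor via polar decomposition, which is precisely where the commutation hypothesis~\eqref{Equ33} enters, through Proposition~\ref{CaracBraMat}. Everything else is standard functional analysis — von Neumann's theorem, invariance of closed convex sets under Cesàro averaging, and contractivity of projections — the one recurring point of care being that pointwise matrix inequalities survive an $H_Q$-limit (an $L^2$-type limit) only along an a.e.\ convergent subsequence.
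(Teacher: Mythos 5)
Your proof is correct and rests on the same two pillars as the paper's: von Neumann's ergodic theorem for item~1 and the orthogonal-conjugation identity $Q^{1/2}(G(s)A)Q^{1/2}= W_s\,\bigl((Q^{1/2}AQ^{1/2})\circ Y(s;\cdot)\bigr)\,{}^{t}W_s$ for items~2--4, but you organize items~2--4 differently. The paper exhibits the orthogonal factor explicitly as $\mathcal{O}(s;\cdot)=Q_s^{1/2}\,\partial Y(s;\cdot)\,Q^{-1/2}$, verifies its orthogonality by direct computation, and then proves items~3 and~4 by pairing $G(s)A$ against test matrices $\psi\,P^{1/2}\xi\otimes P^{1/2}\xi$ and $\psi\,P^{1/2}MP^{1/2}$ respectively, averaging the resulting scalar inequalities in $s$, and reading off the pointwise conclusions. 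You instead obtain the orthogonal factor via polar decomposition --- it is of course the same matrix --- and then derive items~2, 3 and~4 in one stroke by observing that the pointwise constraints cut out convex, $H_Q$-closed, $G$-invariant subsets of $H_Q$, and that Cesàro averages, being $H_Q$-limits of convex combinations of the $G(s)A$, stay inside any such set. Your route is more unified and avoids choosing test objects; the paper's is more explicit, keeping everything at the level of scalar inequalities. Your added uniformity-in-$r$ argument for item~1 is correct, though the version of von Neumann's theorem quoted in the paper already asserts it.

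One point needs repair in item~4: you define $\mathcal{B}_R$ with the operator norm of $Q^{1/2}AQ^{1/2}$, but $|A|_{H_Q^\infty}$ is the essential supremum of the Frobenius norm $(QA:AQ)^{1/2}$, which dominates the operator norm. With your definition, $\ave{A}\in\mathcal{B}_{|A|_{H_Q^\infty}}$ controls only the operator norm of $Q^{1/2}\ave{A}Q^{1/2}$ and does not yield $|\ave{A}|_{H_Q^\infty}\le|A|_{H_Q^\infty}$. The fix is immediate: define $\mathcal{B}_R$ by the Frobenius norm. That set is still convex and $H_Q$-closed, and it is $G$-invariant because orthogonal conjugation and composition with a measure-preserving diffeomorphism both leave the a.e.\ Frobenius-norm bound unchanged. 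Then $\ave{A}\in\mathcal{B}_{|A|_{H_Q^\infty}}$ gives exactly the claimed inequality.
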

In view of Theorem \ref{AveMatField}, the limit of the parabolic problems \eqref{Equ8} becomes, accordingly to \eqref{EquLimMod}
\begin{equation}
\label{Equ14}
\left\{
\begin{array}{ll}
\partial_t v- \divz ( \ave{D} \nabla _z v) =0, & (t,z)\in \R_+ \times \R^m\\
v(0,z) = \uin (z), & z \in \R^m.
\end{array}
\right.
\end{equation}
Under some regularity assumptions (see Section \ref{Proofs}), we obtain a strong convergence result for the family $(\ve)_{\eps>0}$ in $\litloclty$, toward the solution $v$ of the problem \eqref{Equ14}. Coming back to the family $(\ue)_{\eps >0}$, through the variable change in \eqref{EquNewUnknown}, and thanks to the fact that for any $s \in \R$, $\ys$ is measure preserving, we justify that at any time $t \in \R_+, \eps >0$, $\ue (t, \cdot)$ behaves (in $\lty$) like the composition product between $v(t, \cdot)$ and $\ymt$, that is 
\[
\lime \inty{( \;\ue (t,y) - v ( t, \ymty)\;)^2 } = \lime \intz{(\;\ve (t,z) - v(t,z)\;)^2 } = 0
\]
uniformly with respect to $t \in [0,T]$, for any $T \in \R_+$. 
\begin{thm}
\label{MainRes1}
Assume that the hypotheses \eqref{Equ15}, \eqref{Equ17}, \eqref{Equ58}, \eqref{Equ58Bis}, \eqref{E45}, \eqref{E46}, \eqref{E49} hold true together with all the regularity conditions in Proposition \ref{MoreEstimatesBis}. We suppose that $\uin \in H^2_R$ (see \eqref{HoneR}, \eqref{HtwoR} for the definitions of $H^k_R$ and $\nabla ^R$) and we denote by $(\ue)_{\eps>0}$ the variational solutions of \eqref{Equ1} and by $(\ve)_{\eps>0}$ the functions
\[
\ve (t,z) = \ue (t, \ytz),\;\;(t,z) \in \R_+ \times \R^m,\;\;\eps >0.
\]
Then the family $(\ve)_{\eps>0}$ converges strongly in $\litloclty$ to the unique variational solution $v \in \litlty$ of \eqref{Equ14}. The function $v$ has the regularity 
\[
\partial _t v, \;\nrz v, \;\nrz \otimes \nrz v   \in \litlocltz, \;\;\partial _t \nrz v \in \lttlocltz
\]
and $(\nabla _z \ve )_{\eps >0}$ converges toward $\nabla _z v$ in $L^2_{\mathrm{loc}}(\R_+; X_P)$ when $\eps$ goes to $0$ (see Section \ref{AveOpe} for the definition of the Hilbert space $X_P$). 
\end{thm}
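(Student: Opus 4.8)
The plan is to follow the classical two-scale / homogenization strategy but made rigorous using the ergodic two-scale convergence machinery announced in Section \ref{2scale} and the unitary group structure of $(G(s))_{s\in\R}$. First I would collect the uniform a priori estimates on $(\ve)_{\eps>0}$: testing \eqref{Equ8} against $\ve$ and using that $G(t/\eps)D$ is, for each fixed time, congruent to a uniformly coercive matrix field (this is where \eqref{Equ58}, \eqref{Equ58Bis} and item 3 of Theorem \ref{AveMatField} enter), one gets $(\ve)_{\eps>0}$ bounded in $\litlty$ and $(\nabla_z\ve)_{\eps>0}$ bounded in $\lttlocxp$. The additional regularity hypotheses of Proposition \ref{MoreEstimatesBis} then give uniform bounds on $\partial_t\ve$ and on the $R$-derivatives $\nrz\ve$, $\nrz\otimes\nrz\ve$, $\partial_t\nrz\ve$, which will eventually transfer to $v$; they also provide the compactness (an Aubin--Lions argument in the $\nabla^R$-scale of spaces) needed to upgrade weak convergence to strong convergence in $\litloclty$.

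Next I would pass to the limit. Extracting a subsequence, $\ve\rightharpoonup v$ and, by the ergodic two-scale convergence results of Section \ref{2scale}, $\nabla_z\ve$ two-scale converges to some $\nabla_z v(t,z)+\nabla_z v^1(t,s,z)$, with the cell equation $\partial_s v=0$ recovered from \eqref{Equ11}, so $v=v(t,z)$. Using test functions of the form $\varphi(t,z)+\eps\psi(t,t/\eps,z)$ in the weak formulation of \eqref{Equ8} and identifying orders in $\eps$, the $O(1)$ identity yields exactly \eqref{Equ12}; then averaging in $s$ over $(0,S)$ and letting $S\to+\infty$ kills the $\partial_s v^1$ term by von Neumann's theorem (Theorem \ref{vonNeumann}) and replaces $G(s)D$ by $\ave{D}=\proj{\ker L}D$, giving \eqref{Equ14}. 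Uniqueness of the variational solution of \eqref{Equ14} (which holds because $\ave{D}$ is symmetric and coercive by items 2 and 3 of Theorem \ref{AveMatField}) then shows the whole family converges, not just a subsequence.

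After that I would establish the strong convergences. Strong convergence of $(\ve)_{\eps>0}$ in $\litloclty$ follows from the compactness coming from the uniform bounds on $\partial_t\ve$ together with the regularizing estimates in the $\nabla^R$-scale. For the strong convergence of $(\nabla_z\ve)_{\eps>0}$ toward $\nabla_z v$ in $L^2_{\mathrm{loc}}(\R_+;X_P)$ I would use the standard energy-method trick: compare the energy dissipation of $\ve$ with that of $v$, i.e. write out $\int_0^T(G(t/\eps)D)\nabla_z\ve\cdot\nabla_z\ve\,\md t$, use the equation to express it via $\tfrac12\|\uin\|^2-\tfrac12\|\ve(T)\|^2$, pass to the limit using the already-proven convergences, and recognize the limit as $\int_0^T\ave{D}\nabla_z v\cdot\nabla_z v\,\md t$; combined with lower semicontinuity this forces norm convergence, hence strong convergence, of the gradients. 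The regularity claimed for $v$ is obtained by passing to the limit in the uniform bounds of Proposition \ref{MoreEstimatesBis} (weak-$*$ lower semicontinuity of norms).

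The main obstacle I expect is the limit passage in the diffusion term, specifically reconciling the fast oscillation $G(t/\eps)D$ (a genuinely non-periodic, merely ergodic time dependence living in the weighted matrix space $H_Q$) with the two-scale limit of $\nabla_z\ve$: one must show that the product $(G(t/\eps)D)\nabla_z\ve$ converges in the appropriate weak sense to $\ave{D}\nabla_z v$ without any periodicity, which is exactly why the ergodic two-scale convergence theory of Section \ref{2scale} has to be developed and why the unitary-group/von Neumann structure is essential — naive oscillating-test-function arguments à la Tartar would not close because there is no period to average over. A secondary technical difficulty is keeping track of the two distinct differential structures (the flat $\nabla_z$ and the twisted $\nabla^R$ adapted to the flow $Y$) and making sure the regularity estimates in the $H^k_R$ scale survive the change of variables and the limit; this is presumably where the compatibility conditions \eqref{E45}, \eqref{E46}, \eqref{E49} are used.
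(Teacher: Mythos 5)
Your global outline (uniform estimates, passage to the limit, uniqueness implies full-family convergence, energy identity for the gradients) matches the structure of the paper's proof, but two of the mechanisms you invoke would not close the argument.

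First, the compactness step. You propose to get strong convergence of $(\ve)_{\eps>0}$ in $\litloclty$ by an Aubin--Lions argument ``in the $\nabla^R$-scale of spaces'' from the uniform bounds on $\ve$, $\nrz\ve$, $\nrz\otimes\nrz\ve$ and $\partial_t\ve$. This fails: the target space $\litloclty$ is local in time but \emph{global} in the space variable $y\in\R^m$, and the embedding $H^1_R\hookrightarrow\lty$ (or $H^2_R\hookrightarrow\lty$) is not compact on the unbounded domain $\R^m$ — bumps can escape to infinity while all your bounds hold. The paper replaces this compactness argument by an energy identity argument: from \eqref{E103}, \eqref{E104} together with the lower-semicontinuity of the energy dissipation along the oscillating coefficient (statement 1 of Proposition \ref{StrongLim}) one obtains $\limsup_{\eps}\|\ve(t)\|_{\lty}^2\leq\|v(t)\|_{\lty}^2$ at each fixed time, then Fatou's lemma and the weak $L^2_tL^2_y$ convergence force strong convergence in $L^2([0,T];\lty)$, a subsequence converges a.e.\ in $t$, and the time-equicontinuity coming from the uniform bound on $\partial_t\ve$ upgrades this to convergence for every $t$. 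This is the substantive content of Proposition \ref{StrongLim}; it substitutes for compactness, which is unavailable here.

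Second, the identification of the limit equation. You describe classical two-scale convergence with an oscillating corrector $v^1(t,s,z)$, cell equation $\partial_s v=0$, and two-scale test functions $\varphi+\eps\psi(t,t/\eps,z)$. That is the formal Ansatz of Section~\ref{MainRes}, not the machinery actually developed in Section~\ref{2scale}. What Proposition~\ref{WeakLim} provides is a direct averaged limit: if $(w^\beta)$ is bounded in $L^2([0,T];X_P)$, equicontinuous in time with values in $X_P$, and $w^\beta\rightharpoonup w^0$, then $\int_0^T\avepq{\theta(t)\otimes w^\beta(t)}{G(t/\eps)D}\,\md t\to\int_0^T\avepq{\theta(t)\otimes w^0(t)}{\ave{D}}\,\md t$; no two-scale limit and no $v^1$ are produced. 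The paper accordingly tests \eqref{Equ8} against a \emph{fixed} product $\eta(t)\Phi(z)$ and applies this lemma, with $w^\eps=\nabla_z\ve$ and the equicontinuity modulus supplied by the uniform bound on $\partial_t\nrz\ve$ from Proposition~\ref{MoreEstimatesBis}. So the extra regularity of $\uin\in H^2_R$ and the hypotheses of Proposition~\ref{MoreEstimatesBis} are not there for Aubin--Lions compactness, as you suggest, but precisely to furnish the $\sqrt{|t-t'|}$ modulus of continuity in $X_P$ that Proposition~\ref{WeakLim} requires. Your closing observation that periodicity is absent and that the unitary-group/von Neumann structure is essential is exactly right; this is why the equicontinuity-based lemma, rather than a full two-scale convergence theory, is the tool used.

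Your energy-method sketch for the strong convergence of $\nabla_z\ve$ in $L^2_{\mathrm{loc}}(\R_+;X_P)$ is essentially the paper's argument (encapsulated in statement 3 of Proposition~\ref{StrongLim}); note, however, that it needs the pointwise-in-time convergence $\|\ve(T)\|\to\|v(T)\|$ as an input, so it cannot be decoupled from the corrected compactness step above.
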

Under additional hypotheses we can justify that $\ve = v + \mathcal{O}(\eps)$ in $\litlocltz$, as suggested by the formal Ansatz \eqref{Equ10}.
\begin{thm}
\label{MainRes2}
Assume that the hypotheses \eqref{Equ15}, \eqref{Equ17}, \eqref{Equ58}, \eqref{Equ58Bis}, \eqref{E45}, \eqref{E46} hold true. Moreover, we assume that the solution $v$ of the limit model \eqref{Equ14} is smooth enough, that is
\[
\nrz v \in \litlocltz, \;\;\nrz \otimes \nrz v \in \litlocltz
\]
\[
\nrz \partial _t v \in \lotlocltz, \;\;\nrz \otimes \nrz \partial _t v \in \lotlocltz
\]
\[
\nrz \otimes \nrz \otimes \nrz v \in \lttlocltz,\;\;\nrz \otimes \nrz \otimes \nrz \otimes \nrz v \in \lotlocltz
\]
and that there is a smooth matrix field $C$, that is
\[
\divy(RC), \;\;b_k \cdny \divy(RC), \;\;b_l \cdny (b_k \cdny \divy(RC)) \in \liy,\;\;k,l \in \{1, ..., m\}
\]
\[
RC\;^tR,\;\;b_k \cdny (RC\;^tR),\;\;b_l \cdny (b_k \cdny (RC\;^tR))\in \liy,\;\;k,l \in \{1, ..., m\}
\]
such that the following decomposition holds true
\[
D = \ave{D} + L(C),\;\;C \in (\ker L )^\bot.
\]
We denote by $(\ue)_{\eps>0}$ the variational solutions of \eqref{Equ1}. Then for any $T\in \R_+$, there is a constant $C_T$ such that 
\[
\sup _{t \in [0,T]} \|\ue (t, \cdot) - v(t, \ymt)\|_{\lty} \leq C_T \eps
\]
\[
\left ( \int _0 ^T |\nabla _y \ue (t, \cdot) - \nabla _y v(t, \ymt)|_P ^2 \;\md t\right ) ^{1/2}\leq C_T \eps.
\]
\end{thm}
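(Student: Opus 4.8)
The strategy is a standard two-scale energy estimate, made rigorous by the decomposition $D = \ave{D} + L(C)$ available by hypothesis. The idea is to build an explicit approximation of $\ve$ up to order $\eps$ by following the Ansatz \eqref{Equ10}. Set $v^1(t,s,z) = -\int_0^s (G(\sigma) L(C))(z) : \nabla_z \otimes \nabla_z v(t,z)\,\md \sigma + (\text{something independent of }s)$; more precisely, since $\partial_s v^1 = \divz(G(s)D\nabla_z v) - \divz(\ave{D}\nabla_z v) = \divz((G(s)D - \ave{D})\nabla_z v)$ and $G(s)D - \ave{D} = G(s)L(C) = \frac{\md}{\md s}(G(s)C)$ (because $C \in (\ker L)^\bot = \overline{\mathrm{Range}\,L}$ and $L$ generates $(G(s))_s$), one takes $v^1(t,s,z)$ so that $\partial_s v^1 = \divz\{(\frac{\md}{\md s}(G(s)C))\nabla_z v\}$, i.e. essentially $v^1 = \divz\{(G(s)C - C)\nabla_z v\}$, up to adjusting the additive $s$-independent part so that $v^1$ has zero average (this is where $C \in (\ker L)^\bot$ is used, to keep $G(s)C - C$ bounded uniformly in $s$ via von Neumann's theorem and the regularity of $C$). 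The first step is therefore to define $\tue(t,z) = v(t,z) + \eps v^1(t, t/\eps, z)$ and compute the residual $\partial_t \tue - \divz((G(t/\eps)D)\nabla_z \tue)$; by construction the $O(1)$ terms cancel using \eqref{Equ11}--\eqref{Equ12}, and the residual is $O(\eps)$ in $\lotlocltz$, the $\eps$-order terms being $\eps\partial_t v^1$ and $\eps\,\divz((G(t/\eps)D)\nabla_z v^1)$ — exactly the quantities the regularity hypotheses on $v$ and $C$ are designed to control.

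The second step is the energy estimate for the error $\we = \ve - \tue$. Subtracting the equation \eqref{Equ8} for $\ve$ from the approximate equation for $\tue$, one gets $\partial_t \we - \divz((G(t/\eps)D)\nabla_z \we) = -\eps\,\rho^\eps$ with $\|\rho^\eps\|_{\lotlocltz}$ bounded uniformly in $\eps$, and initial datum $\we(0,\cdot) = -\eps v^1(0,0,\cdot)$, which is $O(\eps)$ in $\lty$. Testing against $\we$ and using that $G(t/\eps)D$ inherits the uniform coercivity (this is the point of Theorem \ref{AveMatField}(3) applied fiberwise, or rather the assumption that $D$ itself, hence $G(s)D$, is uniformly coercive in the $P$-metric — recall $\divz(G(s)D\nabla_z\cdot)$ is self-adjoint and coercive on $X_P$), one obtains by Grönwall
\[
\sup_{t\in[0,T]} \|\we(t,\cdot)\|_{\lty}^2 + \int_0^T |\nabla_z \we|_P^2\,\md t \leq C_T \eps^2.
\]
Then $\|\ve - v\|_{\lty} \leq \|\we\|_{\lty} + \eps\|v^1\|_{\lty} = O(\eps)$ uniformly on $[0,T]$, and similarly for the gradient term. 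Finally, pull back through the change of variables \eqref{EquNewUnknown}: since $y \mapsto \ytz$ is measure preserving for every $t$, $\|\ue(t,\cdot) - v(t,\ymt)\|_{\lty} = \|\ve(t,\cdot) - v(t,\cdot)\|_{\lty}$, and for the gradient one uses the corresponding change-of-variables identity for $|\nabla_y\cdot|_P$, which holds precisely because of \eqref{Equ33} (equivalently Proposition \ref{CaracBraMat}), giving the two stated estimates.

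The main obstacle is the construction and control of the corrector $v^1$: one must verify that $G(s)C - C$ stays bounded in the relevant $L^\infty$-type norm uniformly in $s$ (not merely in $H_Q$), which is why the hypotheses demand that $C$ and its iterated derivatives $b_k\cdny(\cdots)$ along the flow lie in $\liy$ — these are exactly the quantities that appear when one differentiates $G(s)C$ in $z$ and must be estimated uniformly in $s$ using the flow representation \eqref{EquG}. A secondary technical point is making the formal manipulation $G(s)L(C) = \frac{\md}{\md s}(G(s)C)$ rigorous at the level of matrix fields (differentiability of $s \mapsto G(s)C$ into $H_Q$, valid since $C \in \dom(L)$), and checking that all the terms produced in the residual $\rho^\eps$ — which involve $\nabla_z v$, $\nabla_z\otimes\nabla_z v$, $\nabla_z\otimes\nabla_z\otimes\nabla_z v$, $\nabla_z\partial_t v$, etc., multiplied by $C$, $\divy(RC)$, $RC\,{}^tR$ and their flow-derivatives — are indeed in $\lotlocltz$; this is a bookkeeping exercise matching each term against one of the listed regularity assumptions, routine once the corrector is correctly set up.
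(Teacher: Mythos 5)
Your proposal is correct and follows essentially the same route as the paper: write the corrector as $\divz\{(G(s)C-C)\nabla_z v\}$ so that the $O(1)$ terms in \eqref{Equ12} cancel, estimate the resulting $O(\eps)$ residual using the listed $L^\infty$-bounds on $\divy(RC)$, $RC\,{}^tR$ and their flow-derivatives together with the regularity of $v$, and conclude by a Gr\"onwall/energy estimate plus the measure-preserving change of variables. (The paper carries out the energy estimate directly in the $y$-variable on $\ue-\tue-\eps\tueo$ rather than pulling back at the end, but this is only cosmetic.) One small misattribution worth correcting: the hypothesis $C\in(\ker L)^\perp$ is not what keeps $G(s)C-C$ bounded uniformly in $s$ — that is automatic from the unitarity of $(G(s))_s$ on $H_Q$ and the $L^\infty$-type hypotheses on $C$. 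Its actual role, as the paper explains, is a normalization that forces $C$ to be \emph{symmetric}: since $D$ and $\ave D$ are symmetric, $L(C-{}^tC)=0$, and then $C-{}^tC\in\ker L\cap(\ker L)^\perp=\{0\}$.
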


\section{The average of a matrix field}
\label{AveOpe}
Consider a matrix field $P$ satisfying the hypotheses \eqref{Equ32Bis}, \eqref{Equ33} and the inverse matrix field $Q = P^{-1}$. We introduce the set
\[
H_Q = \left \{A:\R^m \to \mathcal{M}_m (\R)\;:\; \inty{Q(y)A(y) : A(y) Q(y)} < +\infty \right \}
\]
and the application 
\[
(\cdot, \cdot)_Q : H_Q \times H_Q \to \R,\;\;(A,B)_Q = \inty{Q(y)A(y):B(y)Q(y)}, \;A, B \in H_Q.
\]
It is easily seen that the bilinear application $(\cdot, \cdot)_Q$ is symmetric and positive definite and that the set $H_Q$ endowed with the scalar product $(\cdot,\cdot)_Q$ is a Hilbert space, whose norm is denoted by $|A|_Q = (A, A)_Q ^{1/2}, A \in H_Q$. Clearly $C^0_c(\R^m,\mathcal{M}_m(\R)) \subset H_Q$. 
Observe that $H_Q \subset L^1_{\mathrm{loc}}(\R^m;\mathcal{M}_m(\R))$. Indeed, if for any matrix $M$ the notation $|M|$ stands for the norm subordonated to the euclidian norm of $\R^m$
\[
|M| = \sup _{\xi \in \R^m \setminus \{0\}} \frac{|M\xi|}{|\xi|} \leq ( M : M ) ^{1/2}
\]
we have
\begin{eqnarray*}
\label{Equ57} |A| & = & \sup _{\xi, \eta \neq 0} \displaystyle \frac{A \xi \cdot \eta}{|\xi|\;|\eta|} \\
& = & \sup _{\xi, \eta \neq 0} \displaystyle \frac{Q^{1/2}AQ^{1/2} P ^{1/2}\xi \cdot P^{1/2} \eta}{|P^{1/2} \xi|\;|P^{1/2} \eta|}\;\frac{|P^{1/2}\xi|}{|\xi|}\;\frac{|P^{1/2}\eta|}{|\eta|} \nonumber \\
& \leq & |Q^{1/2}AQ^{1/2} |\;|P^{1/2}|^2 \nonumber \\
& \leq & ( Q^{1/2}AQ^{1/2}:Q^{1/2}AQ^{1/2}) ^{1/2} \;|P|.\nonumber 
\end{eqnarray*}
We deduce that for any $R>0$
\[
\int_{B_R} |A(y)|\;\md y \leq \int _{B_R} ( Q^{1/2}AQ^{1/2}:Q^{1/2}AQ^{1/2}) ^{1/2} \;|P|\;\md y \leq (A,A)_Q ^{1/2} \left ( \int _{B_R} |P(y)|^2 \;\md y \right ) ^{1/2}.
\]
When replacing the matrix field $Q$ by the matrix field $P$, we obtain the Hilbert space
\[
H_P = \left \{A:\R^m \to \mathcal{M}_m(\R)\;:\; \inty{P(y)A(y) : A(y) P(y)} < +\infty \right \}
\]
endowed with the scalar product 
\[
(\cdot, \cdot)_P : H_P \times H_P \to \R,\;\;(A,B)_P = \inty{P(y)A(y):B(y)P(y)}, \;A, B \in H_P.
\]
Motivated by the computations leading to \eqref{EquG}, we consider the family of linear transformations $(G(s))_{s\in\R}$, acting on matrix fields. It happens that $(G(s))_{s\in\R}$ is a $C^0$-group of unitary operators on $H_Q$ (see \cite{BosAnisoDiff} Proposition 3.12 for details). For any function $f = f(y), y \in \R^m$, the notation $f_s = f_s(z)$ stands for the composition product $f_s = f \circ \ys$. 
\begin{pro}
\label{CZGroup} 
Assume that the hypotheses \eqref{Equ15}, \eqref{Equ17}, \eqref{Equ32Bis}, \eqref{Equ33} hold true.
\begin{enumerate}
\item
The family of applications 
\[
A \to G(s)A : = \partial  Y ^{-1} (s; \cdot) A_s \; {^t \partial  Y } ^{-1} (s; \cdot) = \partial \ymsys A_s \;^t \partial \ymsys
\]
is a $C^0$-group of unitary operators on $H_Q$.
\item
If $A$ is a field of symmetric matrices, then so is $G(s)A$, for any $s \in \R$.
\item
If $A$ is a field of positive matrices, then so is $G(s)A$, for any $s \in \R$. 
\item
If there is $\alpha >0$ such that $Q^{1/2}(y)A(y)Q^{1/2}(y) \geq \alpha I_m, y \in \R^m$, then for any $s\in \R$ we have $Q^{1/2}(y)(G(s)A)(y)Q^{1/2}(y) \geq \alpha I_m, y \in \R ^m$. 
\end{enumerate}
\end{pro}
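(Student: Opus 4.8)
The plan is to reduce everything to two algebraic facts about the linearised flow together with one change of variables. Write $J_s(z) = \partial Y(s;z)$, so that $\partial Y^{-1}(s;\cdot) = J_s^{-1}$ (differentiate $Y(-s;Y(s;z))=z$), and recall the cocycle identity $J_{s+t}(z) = J_s(Y(t;z))\,J_t(z)$, obtained by differentiating the flow property $Y(s+t;z) = Y(s;Y(t;z))$. Since $P$ satisfies \eqref{Equ33}, Proposition \ref{CaracBraMat} applied to $A=P$ gives $P(Y(s;z)) = J_s(z)\,P(z)\,{}^tJ_s(z)$ for a.e.\ $z$, and hence the matrix field
\[
O_s(z) := Q^{1/2}(z)\,J_s(z)^{-1}\,P^{1/2}(Y(s;z))
\]
satisfies $O_s(z)\,{}^tO_s(z) = I_m$, i.e.\ $O_s(z)$ is orthogonal for a.e.\ $z$. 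The first step is then to check, straight from the definition $G(s)A = J_s^{-1}\,A_s\,{}^tJ_s^{-1}$ (with $A_s = A\circ Y(s;\cdot)$), by inserting $P^{1/2}(Y(s;\cdot))Q^{1/2}(Y(s;\cdot)) = I_m$ on each side of $A_s$, the conjugation identity
\[
Q^{1/2}(z)\,(G(s)A)(z)\,Q^{1/2}(z) = O_s(z)\,\big[\,(Q^{1/2}A\,Q^{1/2})(Y(s;z))\,\big]\,{}^tO_s(z).
\]
In other words, through the isometry $J(A) = Q^{1/2}AQ^{1/2}$ of $H_Q$ onto $L^2(\R^m;\mathcal M_m(\R))$, the operator $G(s)$ is conjugate to $\tilde G(s):\Phi \mapsto O_s\,(\Phi\circ Y(s;\cdot))\,{}^tO_s$.

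From this identity, items 1--4 are almost immediate. For the unitarity in item 1, conjugation by an orthogonal matrix preserves the Hilbert--Schmidt norm $(M:M)^{1/2}$, so the pointwise Hilbert--Schmidt norm of $Q^{1/2}(G(s)A)Q^{1/2}$ at $z$ equals that of $Q^{1/2}AQ^{1/2}$ at $Y(s;z)$; integrating and using that $z\mapsto Y(s;z)$ is measure preserving (because $\divy b = 0$) gives $|G(s)A|_Q = |A|_Q$, which in particular shows $G(s)$ maps $H_Q$ into $H_Q$. The group law $G(s)\circ G(t) = G(s+t)$, $G(0)=\mathrm{Id}$, is a direct computation: expanding $G(s)(G(t)A)(z)$ and using $Y(t;Y(s;z)) = Y(s+t;z)$ together with $J_s(z)^{-1}J_t(Y(s;z))^{-1} = J_{s+t}(z)^{-1}$ collapses the two Jacobian conjugations into one; invertibility of $G(s)$ (inverse $G(-s)$) follows. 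Item 2 and item 3 hold because $G(s)A(z) = J_s(z)^{-1}A(Y(s;z))\,{}^tJ_s(z)^{-1}$ is a congruence of $A(Y(s;z))$, and congruence preserves symmetry and positivity. Item 4 is read off the conjugation identity: if $Q^{1/2}AQ^{1/2}\geq \alpha I_m$ pointwise, then $Q^{1/2}(G(s)A)Q^{1/2} = O_s[\cdots]\,{}^tO_s \geq \alpha\,O_s\,{}^tO_s = \alpha I_m$.

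The only genuinely delicate point is the strong continuity of $s\mapsto G(s)A$ in $H_Q$, and it is enough to prove continuity at $s=0$ (a group of invertible isometries is then strongly continuous everywhere, by $T(s)x - T(s_0)x = T(s_0)(T(s-s_0)x-x)$). Working with $\tilde G(s)$ on $L^2$ and fixing $\Phi = Q^{1/2}AQ^{1/2}$, I would split
\[
\|\tilde G(s)\Phi - \Phi\|_{L^2} \le \|O_s\big(\Phi\circ Y(s;\cdot) - \Phi\big){}^tO_s\|_{L^2} + \|O_s\,\Phi\,{}^tO_s - \Phi\|_{L^2} = \|\Phi\circ Y(s;\cdot) - \Phi\|_{L^2} + \|O_s\,\Phi\,{}^tO_s - \Phi\|_{L^2}.
\]
The first term tends to $0$ by the standard continuity of the action of the measure preserving flow on $L^2$, proved by approximating $\Phi$ in $L^2$ by compactly supported continuous matrix fields and using $Y(s;z)\to z$ locally uniformly. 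For the second term one cannot invoke continuity of $P^{1/2}$, since only $P,P^{-1}\in L^2_{\mathrm{loc}}(\R^m)$ is available; instead I will use the commutation relation to rewrite $P^{1/2}(Y(s;z)) = \big(J_s(z)P(z)\,{}^tJ_s(z)\big)^{1/2}$ and invoke continuity of the symmetric square root on the cone of positive definite matrices, together with $J_s(z)\to I_m$ (locally uniformly), to conclude $O_s(z)\to Q^{1/2}(z)P^{1/2}(z) = I_m$ for a.e.\ $z$ along any sequence $s_n\to0$. Since $(O_s\Phi\,{}^tO_s : O_s\Phi\,{}^tO_s)^{1/2} = (\Phi:\Phi)^{1/2}\in L^2$, dominated convergence then kills the second term. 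I expect this square-root-along-the-flow argument — handling $O_s$ under the mere $L^2_{\mathrm{loc}}$ regularity of $P$ — to be the main obstacle; all the rest is algebra and a single change of variables.
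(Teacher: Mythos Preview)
Your proof is correct and follows essentially the same approach as the paper: both arguments hinge on introducing an orthogonal matrix field built from $P^{1/2}$, $Q^{1/2}$ and the Jacobian of the flow (your $O_s$ is, up to transposition, the paper's $\mathcal O(s;\cdot) = Q_s^{1/2}\,\partial Y(s;\cdot)\,Q^{-1/2}$), deriving the conjugation identity $Q^{1/2}(G(s)A)Q^{1/2} = O_s\,(Q^{1/2}AQ^{1/2})_s\,{}^tO_s$, and reading off unitarity, the group law, and items 2--4 from it. The one place you go further is the strong continuity at $s=0$: the paper simply leaves this to the reader, whereas you supply a genuine argument (splitting into the translation term and the $O_s$-conjugation term, and handling the latter via $P^{1/2}(Y(s;z)) = (J_s P\,{}^tJ_s)^{1/2}(z)$, continuity of the square root, and dominated convergence), which is a nice addition given that only $P,P^{-1}\in L^2_{\mathrm{loc}}$ is assumed.
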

\begin{proof}$\;$\\
1. Thanks to the characterization  in Proposition \ref{CaracBraMat} we know that
\begin{equation}
\label{EquCaracP}
P_s = \partial \ys P \;^t \dys,\;\;s\in \R.
\end{equation}
For any $s \in \R$ we consider the matrix field $\calOs = Q_s ^{1/2} \dys  Q^{-1/2}$. Observe that $\calOs$ is a field of orthogonal matrices, for any $s \in \R$. Indeed we have, thanks to \eqref{EquCaracP}
\begin{align*}
^t \calOs \calOs & = Q^{-1/2} \;^t \dys Q_s ^{1/2} Q_s ^{1/2} \dys Q^{-1/2}\\
& = Q^{-1/2} \left ( \dyms P_s \;^t \dyms\right )^{-1} Q^{-1/2} \\
& = Q^{-1/2} P^{-1} Q^{-1/2} \\
& = I_m
\end{align*}
implying that for any matrix field $A$ we have
\begin{align}
\label{EquQGs}
Q^{1/2} G(s)A Q^{1/2} & = Q^{1/2} \dyms A_s \;^t \dyms Q^{1/2} = \;^t \calOs Q_s ^{1/2} A_s Q_s ^{1/2} \calOs.
\end{align}
It is easily seen that if $A \in H_Q$, then for any $s \in \R$
\begin{align*}
|G(s)A|^2_Q & = \inty{Q^{1/2} G(s)A Q^{1/2} : Q^{1/2} G(s)A Q^{1/2}}\\
& = \inty{\;^t \calOs Q_s ^{1/2} A_s Q_s ^{1/2} \calOs : \;^t \calOs Q_s ^{1/2} A_s Q_s ^{1/2} \calOs} \\
& = \inty{Q_s ^{1/2} A_s Q_s ^{1/2} : Q_s ^{1/2} A_s Q_s ^{1/2}}\\
& = \inty{Q^{1/2} A Q^{1/2} :Q^{1/2} A Q^{1/2} } = |A|^2 _Q
\end{align*}
saying that $G(s)$ is a unitary transformation for any $s\in \R$. The group property of the family $(G(s))_{s\in \R}$ follows easily from the group property of the flow $(\ys)_{s \in \R}$
\begin{align*}
G(s) G(t) A & = \dyms (G(t)A)_s {^t \dyms}\\
& = \dyms \partial  Y ^{-1} (t; Y(s;\cdot))(A_t)_s \,{^t \partial  Y ^{-1} (t; Y(s;\cdot))}\,{^t \dyms} \\
& = \partial  Y ^{-1} (t + s;\cdot)A_{t+s} \,{^t \partial  Y ^{-1} (t + s;\cdot)} = G(t+s) A,\;\;A \in H_Q.
\end{align*}
The continuity of the group, {\it i.e.,} $\lim _{s\to 0} G(s)A = A$ strongly in $H_Q$, is left to the reader.\\
2. Notice that $G(s)$ commutes with transposition
\begin{align*}
^t (G(s)A) & = \;^t \left ( \dyms A_s \;^t \dyms  \right )\\
& = \dyms \;^t A_s \;^t \dyms \\
& = G(s) \;^t A.
\end{align*}
In particular, if $^tA = A$, then $^t (G(s)A) = G(s)A$.\\
3. We use the formula \eqref{EquQGs}. For any $\xi, \eta \in \R^m$, the notation $\xi \otimes \eta$ stands for the matrix whose $(i,j)$ entry is $\xi _i \eta_j$. For any $\xi \in \R^m$ we have
\begin{align*}
G(s)A : Q^{1/2} \xi \otimes Q^{1/2}\xi & = Q^{1/2}G(s)A Q^{1/2} : \xi \otimes \xi \\
& = \;^t \calOs Q_s ^{1/2} A_s Q_s ^{1/2} \calOs : \xi \otimes \xi \\
& = Q_s ^{1/2} A_s Q_s ^{1/2} : \calOs (\xi \otimes \xi ) \;^t \calOs \\
& = Q_s ^{1/2} A_s Q_s ^{1/2} : (\calOs \xi ) \otimes (\calOs \xi)\\
& = A_s : (Q_s ^{1/2} \calOs \xi) \otimes ( Q_s ^{1/2} \calOs \xi ).
\end{align*}
As $A$ is a field of positive matrices, therefore $G(s)A$ is a field of positive matrices as well. \\
4. Assume that there is $\alpha >0$ such that $Q^{1/2} A Q^{1/2}\geq \alpha I_m$. As before we write for any $\xi \in \R^m$
\[
Q^{1/2} G(s)A Q^{1/2} : \xi \otimes \xi  = (Q^{1/2}AQ^{1/2})_s : ( \calOs \xi ) \otimes (\calOs \xi ) \geq \alpha |\calOs \xi |^2 = \alpha |\xi |^2
\]
saying that $Q^{1/2} G(s)A Q^{1/2} \geq \alpha I_m$. 
\end{proof}
We denote by $L$ the infinitesimal generator of the group $G$
\[
L:\dom(L) \subset H_Q \to H_Q,\;\;\dom L = \{ A\in H_Q\;:\; \exists \;\lim _{s \to 0} \frac{G(s)A-A}{s}\;\mbox{ in } \;H_Q\}
\]
and $L(A) = \lim _{s \to 0} \frac{G(s)A-A}{s}$ for any $A \in \dom(L)$.
Notice that $C^1 _c (\R^m) \subset \dom(L)$ and $L(A) = (b \cdny ) A - \dyb A - A \;{^t \dyb}$, $A \in C^1 _c (\R^m)$ (use the hypothesis $Q \in \ltloc{}$ and the dominated convergence theorem). The main properties of the operator $L$ are summarized below (see \cite{BosAnisoDiff} Proposition 3.13 for details)
\begin{pro}
\label{PropOpeL} 
Assume that the hypotheses \eqref{Equ15}, \eqref{Equ17}, \eqref{Equ32Bis}, \eqref{Equ33} hold true.
\begin{enumerate}
\item
The domain of $L$ is dense in $H_Q$ and $L$ is closed.
\item 
The matrix field $A \in H_Q$ belongs to $\dom (L)$ iff there is a constant $C >0$ such that 
\begin{equation*}
\label{Equ61} |G(s)A - A |_Q \leq C |s|,\;\;s \in \R.
\end{equation*}
\item
The operator $L$ is skew-adjoint and we have the orthogonal decomposition $H_Q = \ker L \overset{\perp}{\oplus} \overline{\ran L}$. 
\end{enumerate}
\end{pro}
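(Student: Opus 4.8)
The plan is to treat the three assertions as the standard package of facts about the infinitesimal generator of a $C^0$-group of unitary operators on a Hilbert space, here realized on $H_Q$ via Proposition \ref{CZGroup}.

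\textbf{Density and closedness of $L$ (item 1).} For the density I would introduce the regularizations $A^\tau := \frac{1}{\tau}\int_0^\tau G(s)A\,\md s$ for $A\in H_Q$ and $\tau>0$, the integral being a Bochner integral in $H_Q$ (legitimate since $s\mapsto G(s)A$ is continuous). Using the group property, $G(h)A^\tau = \frac{1}{\tau}\int_h^{h+\tau}G(s)A\,\md s$, whence $\frac{G(h)A^\tau-A^\tau}{h} = \frac{1}{\tau}\big(\tfrac1h\int_\tau^{h+\tau}G(s)A\,\md s - \tfrac1h\int_0^h G(s)A\,\md s\big)$ converges, as $h\to 0$, to $\frac{1}{\tau}(G(\tau)A-A)$ in $H_Q$; hence $A^\tau\in\dom(L)$. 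Since $A^\tau\to A$ in $H_Q$ as $\tau\searrow 0$ by strong continuity of $G$, the domain is dense. For closedness, I would use that for $A\in\dom(L)$ one has $G(t)A-A=\int_0^t G(s)L(A)\,\md s$; given $A_n\in\dom(L)$ with $A_n\to A$ and $L(A_n)\to B$, passing to the limit in this identity (the $G(s)$ being uniformly bounded, in fact unitary) gives $G(t)A-A=\int_0^t G(s)B\,\md s$, and dividing by $t$ and letting $t\to 0$ yields $A\in\dom(L)$, $L(A)=B$.

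\textbf{Lipschitz characterization (item 2).} The direct implication is immediate: for $A\in\dom(L)$, $G(s)A-A=\int_0^s G(\sigma)L(A)\,\md\sigma$, so $|G(s)A-A|_Q\leq |s|\,|L(A)|_Q$ since each $G(\sigma)$ is unitary. Conversely, assume $|G(s)A-A|_Q\leq C|s|$ for all $s\in\R$. The regularizations from item 1 satisfy $L(A^\tau)=\frac{1}{\tau}(G(\tau)A-A)$, hence $|L(A^\tau)|_Q\leq C$ for all $\tau>0$, while $A^\tau\to A$ in $H_Q$. Extract $\tau_n\searrow 0$ along which $L(A^{\tau_n})\rightharpoonup B$ weakly in $H_Q$. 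The graph of $L$ is a closed linear subspace of $H_Q\times H_Q$ by item 1, hence weakly closed; since $(A^{\tau_n},L(A^{\tau_n}))\rightharpoonup(A,B)$ weakly, the pair $(A,B)$ lies in the graph, i.e. $A\in\dom(L)$ with $L(A)=B$.

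\textbf{Skew-adjointness and the orthogonal splitting (item 3).} Skew-symmetry follows by differentiation: for $A,B\in\dom(L)$, using $G(s)^\ast=G(s)^{-1}=G(-s)$ (unitarity, Proposition \ref{CZGroup}), one has $(G(s)A-A,B)_Q=(A,G(-s)B-B)_Q$, and dividing by $s$ and letting $s\to 0$ gives $(L(A),B)_Q=-(A,L(B))_Q$, so $L\subset -L^\ast$. To upgrade to $L^\ast=-L$ I would invoke that the adjoint of a $C^0$-group on a Hilbert space is again a $C^0$-group whose generator is the adjoint of the original generator (Stone's theorem, \cite{ReedSimon}); applied to $(G(-s))_{s\in\R}=(G(s)^\ast)_{s\in\R}$, its generator is at once $-L$ and $L^\ast$, whence $L^\ast=-L$. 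Finally, for any densely defined closed operator $T$ one has $(\ran T)^\perp=\ker(T^\ast)$, hence $\overline{\ran T}=\ker(T^\ast)^\perp$ and $H_Q=\ker(T^\ast)\overset{\perp}{\oplus}\overline{\ran T}$; with $T=L$, $T^\ast=-L$ and $\ker(-L)=\ker L$ this gives $H_Q=\ker L\overset{\perp}{\oplus}\overline{\ran L}$.

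The only nonroutine step is the converse direction in item 2: it is the weak-compactness of $(L(A^\tau))_\tau$ together with the weak closedness of the graph of $L$ (which rests on item 1) that turns the Lipschitz bound into membership in the domain; the rest is bookkeeping with Bochner integrals and the unitarity of the $G(s)$. An alternative route — first proving skew-adjointness and then using the spectral theorem for $iL$ to characterize $\dom(L)$ — is available but not shorter once item 1 is established.
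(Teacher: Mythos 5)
The paper does not actually prove Proposition~\ref{PropOpeL}; it refers to Proposition~3.13 of \cite{BosAnisoDiff}, so there is no in-paper argument to compare against. Your proof is nevertheless correct and is the standard operator-theoretic package for the generator of a $C^0$-group of unitary operators: Ces\`aro regularization $A^\tau=\frac1\tau\int_0^\tau G(s)A\,\md s$ for density, the identity $G(t)A-A=\int_0^t G(s)L(A)\,\md s$ for closedness and for the forward half of item~2, weak sequential compactness of the bounded family $(L(A^\tau))_\tau$ together with weak closedness of the closed linear graph for the converse in item~2, and skew-adjointness plus $(\ran T)^\perp=\ker T^\ast$ for item~3. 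One cosmetic remark on item~3: since Proposition~\ref{CZGroup} already gives that $(G(s))_{s\in\R}$ is a $C^0$-group of \emph{unitary} operators, Stone's theorem yields the skew-adjointness of $L$ in one line ($G(s)=e^{sL}$ with $iL$ self-adjoint), so the detour through the adjoint group and the theorem identifying the generator of the adjoint semigroup, while correct, is not needed. Otherwise there is nothing to flag.
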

The transformations $(G(s))_{s\in \R}$ behave nicely also when applied on weighted $L^\infty$ spaces. We introduce the set
\begin{equation}
\label{HQinf}
H_Q ^\infty = \{ A(y) \;\mbox{measurable}\;:\; Q^{1/2}AQ^{1/2} : Q^{1/2}AQ^{1/2} = QA:AQ \in \liy\}.
\end{equation}
It is a Banach space with respect to the norm
\[
|A|_{H_Q^\infty} = \mbox{ess sup}_{y \in \R^m} (Q(y)A(y):A(y)Q(y))^{1/2}.
\]
This space is left invariant by $(G(s))_{s \in \R}$. Indeed, let us consider $A\in H_{Q}^{\infty}$ and, thanks to \eqref{EquQGs} and to the orthogonality of $\calOs$, observe that
\begin{align*}
Q^{1/2} G(s)A Q^{1/2}:Q^{1/2} G(s)A Q^{1/2}  &= \,^{t}\calOs Q^{1/2}_{s} A_{s} Q^{1/2}_{s} \calOs :{}^{t}\calOs Q^{1/2}_{s} A_{s} Q^{1/2}_{s} \calOs \\
&= ( Q^{1/2}\,A\,Q^{1/2} :Q^{1/2}\,A\,Q^{1/2})_s,\quad \;s\in \R. 
\end{align*}
We deduce that for any $s \in \R$ we have $G(s)A \in H_Q^\infty$ and $|G(s)A|_{H_{Q}^{\infty}} = |A|_{H_{Q}^{\infty}}$.\\
We are now in position to apply the von Neumann's ergodic mean theorem.
\begin{thm}(von Neumann's ergodic mean theorem)\\
\label{vonNeumann}
Let $(\mathcal{G}(s))_{s\in \R}$ be a $C^{0}$-group of unitary operators on an Hilbert space $(\mathcal{H}, (\cdot, \cdot))$ and $\mathcal{L}$ be its infinitesimal generator. Then for any $x \in \mathcal{H}$, we have the strong convergence in $\mathcal{H}$
\[
\limS \frac{1}{S}\int_{r}^{r+S}\mathcal{G}(s)x \;\md s = \mathrm{Proj}_{\text{Ker}\mathcal{L}} x ,\quad \text{uniformly with respect to}\;r\in \R.
\]
\end{thm}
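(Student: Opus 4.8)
The plan is to reduce everything to the orthogonal splitting $\mathcal{H} = \ker\mathcal{L}\overset{\perp}{\oplus}\overline{\ran\mathcal{L}}$ and to analyze the ergodic averages $M_{r,S}x := \frac{1}{S}\int_r^{r+S}\mathcal{G}(s)x\;\md s$ separately on each summand. Two standard preliminary facts are needed. First, by Stone's theorem the infinitesimal generator of a $C^0$-group of unitary operators is skew-adjoint, $\mathcal{L}^\star = -\mathcal{L}$, so that $\ker\mathcal{L} = \ker\mathcal{L}^\star = (\ran\mathcal{L})^\perp$, which yields the announced orthogonal decomposition; this is precisely the content of Proposition \ref{PropOpeL} applied to $\mathcal{G} = G$. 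Second, since $s\mapsto\mathcal{G}(s)x$ is strongly continuous, the Bochner integral defining $M_{r,S}$ makes sense, and because each $\mathcal{G}(s)$ is unitary one has the contraction bound $\|M_{r,S}\|_{\mathcal{L}(\mathcal{H})}\le 1$, uniformly in $r\in\R$ and $S>0$.

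Next I would treat the two pieces. If $x\in\ker\mathcal{L}$, then $s\mapsto\mathcal{G}(s)x$ solves $\frac{\md}{\md s}\mathcal{G}(s)x = \mathcal{G}(s)\mathcal{L}x = 0$ with value $x$ at $s=0$, hence $\mathcal{G}(s)x = x$ for all $s$ and $M_{r,S}x = x = \proj{\ker\mathcal{L}}x$ for every $r,S$; there is nothing to prove. If $x\in\ran\mathcal{L}$, write $x = \mathcal{L}y$ with $y\in\dom\mathcal{L}$; then $\mathcal{G}(s)x = \frac{\md}{\md s}\mathcal{G}(s)y$, and the fundamental theorem of calculus gives the telescoping identity $M_{r,S}x = \frac{1}{S}\big(\mathcal{G}(r+S)y - \mathcal{G}(r)y\big)$, so that $\|M_{r,S}x\|\le \frac{2\|y\|}{S}\to 0$ as $S\to+\infty$, uniformly with respect to $r$. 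Since here $\proj{\ker\mathcal{L}}x = 0$, the claim holds on $\ran\mathcal{L}$.

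For a general $x\in\overline{\ran\mathcal{L}}$ I would run an $\varepsilon/3$ argument: given $\varepsilon>0$, pick $x'\in\ran\mathcal{L}$ with $\|x-x'\|\le\varepsilon/3$, then $S_0$ with $\|M_{r,S}x'\|\le\varepsilon/3$ for all $S\ge S_0$ and all $r$ (previous step); using $\|M_{r,S}\|\le 1$ one gets $\|M_{r,S}x\|\le\|x-x'\| + \|M_{r,S}x'\|\le 2\varepsilon/3$ for $S\ge S_0$, uniformly in $r$, hence $M_{r,S}x\to 0 = \proj{\ker\mathcal{L}}x$. Finally, writing an arbitrary $x\in\mathcal{H}$ as $x = \proj{\ker\mathcal{L}}x + x_\perp$ with $x_\perp\in\overline{\ran\mathcal{L}}$ and invoking linearity of $M_{r,S}$ gives $\limS M_{r,S}x = \proj{\ker\mathcal{L}}x$ strongly, uniformly in $r$.

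The only genuinely delicate input is the structural identity $\mathcal{H} = \ker\mathcal{L}\overset{\perp}{\oplus}\overline{\ran\mathcal{L}}$, i.e. the skew-adjointness of $\mathcal{L}$; granting that, the remaining work is the elementary telescoping estimate on the range together with a density argument made uniform in $r$ by the contraction bound. One should still check carefully the ancillary facts that $s\mapsto\mathcal{G}(s)y$ is differentiable on $\dom\mathcal{L}$ with $\frac{\md}{\md s}\mathcal{G}(s)y = \mathcal{L}\mathcal{G}(s)y = \mathcal{G}(s)\mathcal{L}y$, but these are classical properties of strongly continuous groups and do not present any real obstacle.
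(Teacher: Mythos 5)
Your proof is correct and it is the standard argument for von Neumann's mean ergodic theorem; the paper itself does not supply a proof of Theorem \ref{vonNeumann}, citing Reed--Simon instead, so there is no in-house argument to compare against. One point worth stressing in your favour: the usual textbook statement does not assert uniformity in $r$, whereas the paper needs it later (for instance in Proposition \ref{SlowFastMat}); your argument delivers it correctly, because both the telescoping bound $\|M_{r,S}(\mathcal{L}y)\|\le 2\|y\|/S$ on $\ran\mathcal{L}$ and the contraction bound $\|M_{r,S}\|\le 1$ used in the density step are visibly independent of $r$. The only structural input requiring genuine justification, the decomposition $\mathcal{H}=\ker\mathcal{L}\overset{\perp}{\oplus}\overline{\ran\mathcal{L}}$, follows from Stone's theorem as you say (skew-adjointness gives $\ker\mathcal{L}=\ker\mathcal{L}^{\star}=(\ran\mathcal{L})^{\perp}$), and the identification of $\mathcal{G}(s)x=\frac{\md}{\md s}\mathcal{G}(s)y$ for $x=\mathcal{L}y$ is the classical differentiability of the orbit on $\dom\mathcal{L}$. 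Nothing is missing.
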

The proof of Theorem \ref{AveMatField} comes immediately, by applying Theorem \ref{vonNeumann} to the group in Proposition \ref{CZGroup}.
\begin{proof} (of Theorem \ref{AveMatField}) 
The first and second statements are obvious.\\
3. For any $\xi \in \R^m, \psi \in C^0 _c (\R^m), \psi \geq 0$ we have $\psi (\cdot) P^{1/2} \xi \otimes P^{1/2} \xi \in H_Q$ and we can write, thanks to \eqref{EquQGs}
\begin{align*}
(G(s)A, \psi (\cdot) P^{1/2} \xi \otimes P^{1/2} \xi)_Q & = \inty{Q^{1/2} G(s)AQ^{1/2} : \psi (y) \xi \otimes \xi }\\
& = \inty{\psi(y)\; {^t {\mathcal O}}(s; y) Q_s ^{1/2} A_s Q_s ^{1/2} {\mathcal O}(s; y)\xi \cdot \xi }\\
& = \inty{\psi (y) Q_s ^{1/2} A_s Q_s ^{1/2}: {\mathcal O}(s; y)\xi \otimes {\mathcal O}(s; y)\xi}\\
& \geq \alpha \inty{|{\mathcal O}(s; y)\xi |^2 \psi (y)} \\
& = \alpha |\xi |^2 \inty{\psi (y)}.
\end{align*}
Taking the average over $[0,S]$ and letting $S \to +\infty$ yield 
\begin{align*}
\inty{Q^{1/2} \ave{A} Q^{1/2} : \xi \otimes \xi \psi (y)} & = (\ave{A}, \psi P^{1/2}\xi \otimes P^{1/2}\xi )_Q \nonumber \\
& = \limS \frac{1}{S} \intS{(G(s)A, \psi P^{1/2}\xi \otimes P^{1/2}\xi )_Q} \\
& \geq \inty{\alpha |\xi |^2 \psi (y)}
\end{align*}
implying that 
\[
Q^{1/2}(y) \ave{A} (y)Q^{1/2}(y) \geq \alpha I_m,\;\;y \in \R^m.
\]
4. Obviously, for any $A \in H_Q$, we have by the properties of the orthogonal projection on $\ker L$ that $|\ave{A}|_Q = |\mathrm{Proj} _{\ker L} A|_Q \leq |A|_Q$. For the last inequality, consider $M \in \mathcal{M}_m (\R)$ a fixed matrix, $\psi \in C^0_c (\R^m), \psi \geq 0$ and, as before, observe that $\psi P^{1/2} M P^{1/2} \in H_Q$, which allows us to write
\begin{align*}
(G(s)A, & \psi P^{1/2} M P^{1/2} )_Q  = \inty{Q^{1/2} G(s)A Q^{1/2}: \psi M} \\
& = \inty{\;^t \calO (s;y) Q_s ^{1/2} A_s Q_s ^{1/2} \calO (s;y) : \psi M}\\
& = \inty{Q_s ^{1/2} A_s Q_s ^{1/2} : \calO (s;y) M \;^t \calO (s;y)} \\
& \leq \inty{\sqrt{Q_s ^{1/2} A_s Q_s ^{1/2} : Q_s ^{1/2} A_s Q_s ^{1/2}} \sqrt{\calO (s;y) M \;^t \calO (s;y) : \calO (s;y) M \;^t \calO (s;y)}\psi }\\
& \leq |A|_{H_Q ^\infty} (M:M)^{1/2} \inty{\psi (y)}.
\end{align*}
Taking the average over $[0,S]$ and letting $S \to +\infty$, lead to
\[
\inty{Q^{1/2}\ave{A}Q^{1/2}:M\psi (y) } = (\ave{A}, \psi P^{1/2}MP^{1/2})_Q \leq |A|_{H_Q^\infty} (M:M)^{1/2}\inty{\psi (y)}.
\]
We deduce that 
\[
Q^{1/2}(y) \ave{A}(y) Q^{1/2}(y) : M \leq |A|_{H_Q^\infty} (M:M)^{1/2},\;\;y \in \R^m,\;\;M \in \mathcal{M}_m(\R)
\]
saying that 
\[
|\ave{A}|_{H_Q^\infty} = \mbox{ess sup}_{y \in \R^m} \sqrt{Q^{1/2}(y) \ave{A}(y) Q^{1/2}(y) : Q^{1/2}(y) \ave{A}(y) Q^{1/2}(y)} \leq |A|_{H_Q ^\infty}.
\]
\end{proof}
\noindent
We introduce also the sets of vector fields
\[
X_Q = \{ c:\R^m \to \R^m  \mbox{ measurable}\;:\; \inty{Q(y) : c(y)\otimes c(y) } < +\infty\}
\]
\[
X_Q^\infty = \{ c:\R^m \to \R^m  \mbox{ measurable}\;:\; |Q^{1/2}c|\in \liy \}.
\]
The vector space $X_Q$, endowed with the scalar product 
\[
(\cdot, \cdot)_Q : X_Q \times X_Q \to \R,\;\;(c,d)_Q = \inty{Q(y):c(y) \otimes d(y)},\;\;c, d \in X_Q
\]
becomes a Hilbert space, whose norm is denoted by $|c|_Q = (c, c)_Q^{1/2}, c \in X_Q$. We use the same notation for the scalar product and norm of $H_Q$, resp. $X_Q$. Obviously, it should be understood in the right framework, depending on the arguments being matrix fields, resp. vector fields.\\
The vector space $X_Q ^\infty$ is a Banach space with respect to the norm 
\[
|c|_{X_Q^\infty} = \mbox{ess sup}_{y \in \R^m} |Q^{1/2}(y)c(y)|.
\] 
We end this section by indicating a sufficient condition for \eqref{Equ32Bis}, \eqref{Equ33}. Assume that there is a matrix field $R(y)$ such that 
\begin{equation}
\label{E41} \det R(y) \neq 0,\;y \in \R^m,\;\;R \in \loloc{}
\end{equation}
\begin{equation}
\label{E42}
(b \cdny )R + R \dyb = 0 \;\mbox{in } \dpri.
\end{equation}
The hypothesis \eqref{E42} is equivalent to $R(\ysy) \partial  \ysy  =R(y),\;\;(s, y) \in \R \times \R^m$, which also writes
\begin{equation}
\label{E43}
\partial  \ysy R^{-1} (y) = R^{-1} (\ysy),\;\;(s,y)\in \R \times \R^m.
\end{equation}
We deduce that \eqref{E42} is equivalent to $(b \cdny ) R^{-1} = \dyb R^{-1}$, saying that the columns of $R ^{-1}$ are vector fields in involution with $b$. The vector fields in the columns of $R^{-1}$ are denoted $b_i, 1\leq i\leq m$. At any point $y \in \R^m$ they form a basis for $\R^m$ cf. \eqref{E41} and are supposed smooth
\begin{equation}
\label{Equ58}
b_i \in W^{1, \infty}_{\mathrm{loc}}(\R^m),\;\;\divy b_i \in \liy{},\;\;1\leq i\leq m.
\end{equation}
We assume that any field $b_i$ satisfies the growth condition 
\begin{equation}
\label{Equ58Bis}
\forall i \in \{1, ..., m\}, \;\exists C_i >0 \mbox{ such that } |b_i(y)|\leq C_i ( 1 + |y|),\;\;y \in \R^m
\end{equation}
which guarantees the existence of the global flows $Y_i(s;y) \in W^{1, \infty}_{\mathrm{loc}}(\R \times \R^m),  i \in \{1, ..., m\}$. Clearly $R^{-1} \in L^\infty _{\mathrm{loc}}(\R^m)$, since $b_i$, which are the columns of $R^{-1}$, are supposed  locally bounded on $\R^m$. Since $y \to R^{-1}(y)$ is continuous, the function $y \to \det R^{-1} (y)$ remains away from $0$ on any compact set of $\R^m$, implying that $R = (R^{-1})^{-1} \in \lilocy{}$. In particular $^t R R, (^t R R)^{-1}$ are locally bounded, and therefore locally square integrable on $\R^m$. We define $Q = \;^t R R, P = Q^{-1} = R^{-1} \;^t R^{-1}$ and observe that \eqref{Equ32Bis}, \eqref{Equ33} are satisfied. Indeed, $P(y)$ is symmetric, definite positive, locally square integrable, together with its inverse $Q = P^{-1}$ and, thanks to \eqref{E43}, we have
\begin{align*}
P(\ysy) & = R^{-1} (\ysy) \;{^t R}^{-1} (\ysy) \\
& = \partial  \ysy R^{-1} (y) \;{^t R} ^{-1}(y) \;{^t \partial  \ysy} \\
& = \partial  \ysy P(y) \;{^t \partial  \ysy}
\end{align*}
saying that $[b,P] = 0$ in $\dpri$ cf. Proposition \ref{CaracBraMat}. Under the hypotheses \eqref{Equ58}, \eqref{Equ58Bis}, the space $H_Q, H_Q ^\infty$ also write
\[
H_Q = \left \{A:\R^m \to \mathcal{M}_m(\R) \mbox{ measurable}\;:\; \inty{R(y)A(y)\;^tR(y):R(y)A(y)\;^tR(y)}<+\infty \right \}
\]
and
\[
H_Q ^\infty = \{A:\R^m \to \mathcal{M}_m(\R)\mbox{ measurable}\;:\; R(y)A(y)\;^tR(y):R(y)A(y)\;^tR(y)\in \liy{}\}.
\]
Given the family $(b_i)_{1\leq i \leq m}$ of vector fields in involution with respect to $b$, we construct the following $H^1$ type space on $\R^m$
\begin{equation}
\label{HoneR}
H^1 _R = \bigcap _{i = 1} ^m \dom (b_i \cdot \nabla _y) = \{u \in \lty{}\;:\; {^t }R^{-1} \nabla _y u := \;^t (b_1\cdot \nabla _y u, ..., b_m \cdot \nabla _y u ) \in \lty{} ^m \}
\end{equation}
endowed with the scalar product
\[
(u, v)_R = \inty{u(y) v(y)} + \sum _{i = 1} ^m \inty{(b_i \cdot \nabla _y u) (b_i \cdot \nabla _y v)},\;u, v \in H^1_R.
\]
It is a Hilbert space, whose norm is denoted by $|\cdot |_R$. The operators $b_i \cdny $ are the infinitesimal generators of the $C^0$-groups of linear transformations on $\lty{}$ given by
\[
\tau _i (s) u = u \circ Y_i (s;\cdot),\;\;u \in \lty,\;\;s\in \R,\;\;i \in \{1,...,m\}.
\]
The hypothesis $\divy b_i \in \liy{}$ plays a crucial role when looking for a bound for the Jacobian determinant of $\partial Y_i$. 
\begin{remark}
\label{WeakGradient}
Notice that every element of $H^{1}_{R}$ has a weak gradient in $L^{2}_{loc}(\R^{m})$. Indeed, if $u\in H^{1}_{R}$, we define $v_{i} = b_{i}\cdot \nabla_{y}u$, $i \in \{1, ...,m\}$ and consider $V(y) = {}^{t}R(y)\,{}^{t}(v_{1}(y),...,v_{m}(y))$, $y\in \R^{m}$. The field $V$ is locally square integrable on $\R^m$ since $R$ is locally bounded on $\R^m$ and $(v_i)_{1\leq i \leq m}$ are square integrable on $\R^m$. Using the dual basis $\{c_1, ..., c_m\}$ of $\{b_1, ...,b_m\}$ we write for any $\xi \in (C^1 _c (\R^m))^m$
\begin{align*}
\inty{V(y) \cdot \xi (y)}  & = \inty{\sum _{i = 1} ^m (V(y) \cdot b_i (y)) \;(c_i (y) \cdot \xi (y))}\\
& = - \inty{u(y) \divy \left ( \sum _{i = 1} ^m (c_i(y) \cdot \xi (y))b_i (y) \right )} = - \inty{u(y) \divy \xi }
\end{align*}
which shows that $V$ is the weak gradient of $u$. The $H^1_R$ norm of $u$ can be written using the $X_P$ norm of the gradient
\begin{align}
\label{E47}
|u|^2 _R & = \|u \|^2 _{\lty{}} + \|{^t R } ^{-1} \nabla _y u \|^2 _{\lty{}} \\
& = \inty{\{(u(y))^2 + ( R^{-1} \;{^t R }^{-1} V \cdot V)\}} \nonumber \\
& = \inty{\{(u(y))^2 + (P(y)V(y)\cdot V(y))\}} \nonumber \\
& = \|u \|^2 _{\lty{}} + |V|^2 _P.\nonumber 
\end{align} 
\end{remark} 
In the sequel, for any $u \in H^1_R$, the notation $\nabla _y u$ stands for the weak gradient of $u$, and we have $\nabla _y u = \;^t R \;^t (b_1 \cdny u,...,b_m \cdny u)$. We introduce the differential operator 
\[
\nry := \;^t R^{-1} \nabla _y = \;^t (b_1 \cdny, ..., b_m \cdny).
\]
We will also use the space
\begin{align}
\label{HtwoR}
H^2 _R  = \{u \in H^1 _R\;:\; \nry u \in (H^1_R)^m\} 
 = \{ u \in H^1_R\;:\; b_j\cdny (b_i \cdny u) \in \lty{}, 1\leq i, j\leq m\} 
\end{align}
and the differential operator $(\nry )^2 := \nry \otimes \nry $ given by
\[
(\nry \otimes \nry)_{ij} = b_j \cdny (b_i \cdny ),\;\;i, j \in \{1, ..., m\}.
\]

\subsection{Examples}

In this paragraph we compute explicitly the average matrix field in two cases. Both of them deal with periodic flows. Consider the vector field $b(y)=(\gamma y_2, - \beta y_1)$, for any $y = (y_1, y_2) \in \R^2$, with $\beta, \gamma \in \R_+ ^\star$. We denote by $Y(s;y)$ the flow of the vector field $b$. We intend to determine the average along the flow $Y$ of the matrix field
\[
D(y) = \left ( 
\begin{array}{ccc}
\lambda _1 (y) & 0\\
0 & \lambda _2 (y)
\end{array}
\right),\;\;y \in \R^2
\]
where $\lambda _1, \lambda _2$ are two given functions. It is easily seen that the flow is $2\pi /\sqrt{\beta \gamma}$-periodic and writes $Y(s;y) = \mathcal{R}(-s;\beta, \gamma)y,\;(s,y) \in \R \times \R^2$, with
\[
\mathcal{R}(s;\beta, \gamma) = \left ( 
\begin{array}{ccc}
\cos ( \sqrt{\beta \gamma} \,s)  & - \sqrt{\frac{\gamma}{\beta}} \sin ( \sqrt{\beta \gamma} \,s)\\
\sqrt{\frac{\beta}{\gamma}} \sin ( \sqrt{\beta \gamma}\, s) & \cos ( \sqrt{\beta \gamma}\, s)
\end{array}
\right).
\]
By Theorem \ref{AveMatField} we deduce that
\begin{align*}
\ave{D} & = \frac{\sqrt{\beta \gamma}}{2\pi} \int _0 ^{2\pi/\sqrt{\beta \gamma}} \partial Y (-s; \ys)D (\ys) \,^t \partial Y (-s; \ys)\;\mathrm{d}s\\
& = \frac{\sqrt{\beta \gamma}}{2\pi} \int _0 ^{2\pi/\sqrt{\beta \gamma}} \mathcal{R}(s;\beta, \gamma) D(\ys) \mathcal{R}(-s;\gamma, \beta)\;\mathrm{d}s \\
& = \left ( 
\begin{array}{ccc}
\ave{D}_{11}  &  \ave{D}_{12}\\
\ave{D}_{21}  &  \ave{D}_{22}
\end{array}
\right )
\end{align*}
where 
\[
\ave{D}_{11} = \frac{1}{2}\ave{\lambda_1 [ 1 + \cos (2 \sqrt{\beta \gamma }\,\cdot)]} + \frac{\gamma}{2\beta} \ave{\lambda _2 [1 - \cos (2 \sqrt{\beta \gamma }\,\cdot)]}
\]
\[
\ave{D}_{12} = \ave{D}_{21} = \frac{\sqrt{\beta}}{2 \sqrt{\gamma}}\ave{\lambda _1 \sin (2 \sqrt{\beta \gamma }\,\cdot)} - \frac{\sqrt{\gamma}}{2 \sqrt{\beta}}\ave{\lambda _2 \sin (2 \sqrt{\beta \gamma }\,\cdot)}
\]
\[
\ave{D}_{22} = \frac{\beta}{2\gamma}\ave{\lambda_1 [ 1 - \cos (2 \sqrt{\beta \gamma }\,\cdot)]} + \frac{1}{2} \ave{\lambda _2 [1 - \cos (2 \sqrt{\beta \gamma }\,\cdot)]}
\]
and for any function $h$ the notation $\ave{\lambda _i h(\cdot)}$ stands for $\frac{\sqrt{\beta \gamma}}{2\pi} \int _0 ^{2\pi/\sqrt{\beta \gamma}} \lambda _i (\ysy) h(s)\;\mathrm{d}s,\; i \in \{1, 2\}$. Notice that when $\lambda _1, \lambda_2$ are constant functions along the flow $\ys$ (that is, when $\lambda _1, \lambda _2$ depend only on $\beta (y_1)^2 + \gamma (y_2)^2$), the expression for $\ave{D}$ reduces to
\[
\ave{D} = \left ( 
\begin{array}{ccc}
\frac{1}{2}\lambda _1 + \frac{\gamma}{2\beta} \lambda _2 & 0\\
0 & \frac{\beta}{2\gamma}\lambda _1 + \frac{1}{2} \lambda _2
\end{array}
\right).
\]
We inquire now about the Fokker-Planck equation. The external electro-magnetic field is given by
\[
E = - \nabla _x \Phi,\;\;B^\eps = (0, 0, B/\eps),\;\;x\in \R^3
\]
where, for simplicity, we assume that the magnetic field is uniform. In the finite Larmor radius regime ({\it i.e.,} the typical length in the orthogonal directions is much smaller then the typical length in the parallel direction), the presence density $\fe$ satisfies
\[
\partial _t \fe + \frac{1}{\eps}( v_1 \partial _{x_1}  + v_2 \partial _{x_2}) \fe + v_3 \partial _{x_3} \fe + \frac{q}{m} E \cdot \nabla _v \fe + \frac{qB}{m\eps} (v_2 \partial _{v_1} - v_1 \partial _{v_2}) \fe = \nu \mathrm{div}_v \{\Theta \nabla _v \fe + v\fe\}.
\]
Here $m$ is the particle mass, $q$ is the particle charge, $\nu$ is the collision frequency and $\Theta$ is the temperature. In this case, the flow $Y(s;x,v)$ to be considered corresponds to the vector field
\[
b(x,v) \cdot \nabla _{x,v} =  v_1 \partial _{x_1}  + v_2 \partial _{x_2} + \omega _c (v_2 \partial _{v_1} - v_1 \partial _{v_2}),\;\;\omega_c = \frac{qB}{m},\;\;(x,v) \in \R^6.
\]
It is easily seen that 
\[
\olX(s;\olx, \olv) = \olx + \frac{\polv}{\oc}- \frac{\calr(-\oc s)}{\oc} \;\polv,\;\;X_3(s;x_3) = x_3,\;\;\olV (s; \olv) = \calr (-\oc s) \olv,\;\;V_3(s;v_3) = v_3
\]
where we have used the notations $\olx = (x_1, x_2), \olv = (v_1, v_2), \polv = (v_2, - v_1)$ and $\calr (\theta)$ stands for the rotation of angle $\theta \in \R$. The Jacobian matrix writes
\[
\partial _{x,v} Y (s; x, v) = \left (
\begin{array}{ccccc}
I_2 \quad & O_{2\times 1} \quad & \frac{I_2 - \calr (-\oc s)}{\oc} \cale \quad &  O _{2 \times 1} \\
O_{1\times2} \quad & 1 \quad & O_{1\times 2} \quad & 0 \\
O_{2\times2} \quad & O_{2\times1} \quad & \calr (-\oc s) \quad & O_{2\times 1} \\
O_{1\times2} \quad & 0 \quad & O_{1\times 2} \quad & 1 \\
\end{array}
\right)
\]
where $O_{m\times n}$ stands for the null matrix with $m$ lines and $n$ columns, and $\cale = \calr (-\pi/2)$. The diffusion matrix field to be averaged is 
\[
D = \sum _{i = 1}^3 e_{v_i} \otimes e_{v_i} = 
\left ( 
\begin{array}{ll} 
{O_3} \quad & {O_3} \\
{O_3} \quad & {I_3} 
\end{array}
\right)
\]
and by Theorem \ref{AveMatField} we obtain after direct computations
\begin{align}
\label{EquFPDiffusion}
\ave{D}  = \ave{
\!\!\left (
\begin{array}{ll} 
{O_3} \quad & {O_3} \\
{O_3} \quad & {I_3} 
\end{array}
\right )\!\!} & = \!\! \frac{\oc}{2\pi}\int _0 ^{2\pi/\oc} \partial Y(-s;\ys) \!\!\left (
\begin{array}{ll} 
{O_3} \quad & {O_3} \\
{O_3} \quad & {I_3} 
\end{array}
\right )\;{^t \partial} Y(-s;\ys)\;\mathrm{d}s \nonumber \\
& = \left (
\begin{array}{cccc}
\frac{2I_2}{\oc ^2}  \quad & O_{2\times 1} \quad & - \frac{\cale}{\oc} \quad & O_{2\times 1}\\
O_{1\times 2} \quad & 0 \quad & O_{1\times 2} \quad & 0\\
\frac{\cale}{\oc} \quad & O_{2\times 1} \quad & I_2 \quad & O_{2\times 1}\\
O_{1\times 2} \quad & 0 \quad & O_{1\times 2} \quad & 1
\end{array}
\right).
\end{align}
Notice that the average Fokker-Planck kernel contains diffusion terms not only in velocity variables (as in the Fokker-Planck kernel) but also in space variables (orthogonal to the magnetic lines), as observed in gyrokinetic experiments and numerical simulations.

\section{Well posedness for the perturbed problem and uniform estimates}
\label{UnifEst}

For solving \eqref{Equ1}, we appeal to variational methods. We use the continuous embedding $H^1_R \hookrightarrow \lty$, with dense image (since $C^1 _c (\R^m) \subset H^1 _R$). We work under the hypotheses \eqref{Equ15}, \eqref{Equ17}, \eqref{Equ58}, \eqref{Equ58Bis}. Moreover we assume that 
\begin{equation}
\label{E45}
^t D = D,\;\;D \in H_Q \cap H_Q ^\infty,\;\;b \in X_Q ^\infty
\end{equation}
and
\begin{equation}
\label{E46}
\exists \;\alpha >0 \;\mbox{ such that } Q^{1/2} (y) D(y) Q^{1/2}(y) \geq \alpha I_m,\;\;y \in \R^m
\end{equation}
where $Q = \;^t R R$ and the columns of $R^{-1}$ are given by the vector fields $b_1, ..., b_m$. 
\begin{pro}
\label{Coercivity}
Assume that the hypotheses \eqref{Equ15}, \eqref{Equ17}, \eqref{Equ58}, \eqref{Equ58Bis}, \eqref{E45}, \eqref{E46} hold true. 
\begin{enumerate}
\item
Let us consider the application $a^\eps : H^1 _R \times H^1 _R \to \R$
\[
a^\eps (u,v) = \inty{D(y)\nabla _y u \cdot \nabla _y v} + \frac{1}{\eps} \inty{(b \cdny u ) v(y)},\;\;u, v \in H^1 _R.
\]
The bilinear form $a^\eps$ is well defined, continuous and coercive on $H^1_R$ with respect to $\lty$.
\item
Let us consider the application $\ave{a} : H^1 _R \times H^1 _R \to \R$
\[
\ave{a} (u,v) = \inty{\ave{D}(y) \nabla _y u \cdot \nabla _y v},\;\;u, v \in H^1 _R.
\]
The bilinear form $\ave{a}$ is well defined, continuous and coercive on $H^1_R$ with respect to $\lty$.
\end{enumerate}
\end{pro}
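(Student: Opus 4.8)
The plan is to read off both statements from the weighted structure of $H^1_R$. Given $u,v\in H^1_R$, Remark~\ref{WeakGradient} provides the weak gradients $V=\nabla_y u$, $W=\nabla_y v\in L^2_{\mathrm{loc}}(\R^m)$, and \eqref{E47} gives $|V|_P\le|u|_R$, $|W|_P\le|v|_R$. Since $Q^{1/2}(y)P^{1/2}(y)=I_m$ pointwise, I rewrite $D(y)V\cdot W=(Q^{1/2}DQ^{1/2})(P^{1/2}V)\cdot(P^{1/2}W)$ and $b(y)\cdot V=(Q^{1/2}b)\cdot(P^{1/2}V)$. Hypothesis \eqref{E45} gives $D\in H_Q^\infty$, hence $\|Q^{1/2}(y)D(y)Q^{1/2}(y)\|\le|D|_{H_Q^\infty}$ for a.e. $y$, and $b\in X_Q^\infty$, hence $|Q^{1/2}(y)b(y)|\le|b|_{X_Q^\infty}$ a.e.; by Cauchy--Schwarz this yields $\inty{|DV\cdot W|}\le|D|_{H_Q^\infty}|V|_P|W|_P$ and $\|b\cdot V\|_{\lty}\le|b|_{X_Q^\infty}|V|_P$, so that $(b\cdot V)v\in\loy$. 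Thus $\aeps$ is well defined on $H^1_R\times H^1_R$ and
\[
|\aeps(u,v)|\le\Big(|D|_{H_Q^\infty}+\tfrac1\eps|b|_{X_Q^\infty}\Big)|u|_R\,|v|_R,
\]
which is continuity.

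For coercivity I treat the two terms of $\aeps(u,u)$ separately. The diffusion term is bounded below by \eqref{E46}:
\[
\inty{D\,V\cdot V}=\inty{(Q^{1/2}DQ^{1/2})(P^{1/2}V)\cdot(P^{1/2}V)}\ge\alpha\inty{|P^{1/2}V|^2}=\alpha|V|_P^2=\alpha\big(|u|_R^2-\|u\|_{\lty}^2\big).
\]
The transport term vanishes: I claim $\inty{(b\cdot\nabla_y u)\,u}=0$ for every $u\in H^1_R$, and this is the one point that genuinely requires care, since $b$ has merely linear growth \eqref{Equ17} and $b\,u^2$ need not be globally integrable. I would fix cutoffs $\chi_n\in C^\infty_c(\R^m)$ with $\chi_n\equiv1$ on $B_n$, $\supp\chi_n\subset B_{2n}$, $0\le\chi_n\le1$, $|\nabla_y\chi_n|\le C/n$; then $b\cdot\nabla_y\chi_n$ is supported in $B_{2n}\setminus B_n$ and $|b\cdot\nabla_y\chi_n|\le C(1+2n)\,(C/n)\le C'$ uniformly in $n$. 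Since $\chi_n b$ is Lipschitz with compact support, $u^2\in W^{1,1}_{\mathrm{loc}}(\R^m)$ with $\nabla_y(u^2)=2u\nabla_y u$, and $\divy b=0$ by \eqref{Equ15}, an integration by parts gives
\[
\inty{(b\cdot\nabla_y u)\,u\,\chi_n}=\tfrac12\inty{\chi_n\,b\cdot\nabla_y(u^2)}=-\tfrac12\inty{u^2\,(b\cdot\nabla_y\chi_n)},
\]
whence $\big|\inty{(b\cdot\nabla_y u)\,u\,\chi_n}\big|\le\tfrac{C'}{2}\int_{B_{2n}\setminus B_n}u^2\,\md y\to0$ as $n\to+\infty$; since $b\cdot\nabla_y u\in\lty$ and $u\chi_n\to u$ in $\lty$, the left-hand side converges to $\inty{(b\cdot\nabla_y u)\,u}$, proving the claim. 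Consequently $\aeps(u,u)+\alpha\|u\|_{\lty}^2\ge\alpha|u|_R^2$, i.e. $\aeps$ is coercive on $H^1_R$ with respect to $\lty$.

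For the second statement I invoke Theorem~\ref{AveMatField}: applied to $D$, which is symmetric and positive definite by \eqref{E45}, \eqref{E46}, items~2--4 show that $\ave{D}$ is again a field of symmetric matrices, lies in $H_Q\cap H_Q^\infty$ with $|\ave{D}|_{H_Q^\infty}\le|D|_{H_Q^\infty}$, and satisfies $Q^{1/2}(y)\ave{D}(y)Q^{1/2}(y)\ge\alpha I_m$ for a.e. $y$. Hence $\ave{D}$ fulfils exactly the hypotheses \eqref{E45}, \eqref{E46}, and the same computations as above --- now with no transport term to handle --- give at once
\[
|\ave{a}(u,v)|\le|\ave{D}|_{H_Q^\infty}\,|u|_R\,|v|_R\le|D|_{H_Q^\infty}\,|u|_R\,|v|_R,\qquad \ave{a}(u,u)+\alpha\|u\|_{\lty}^2\ge\alpha|u|_R^2,
\]
which is the well-posedness, continuity and coercivity of $\ave{a}$ on $H^1_R$ with respect to $\lty$. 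The main obstacle, as noted, is the integration-by-parts identity $\inty{(b\cdot\nabla_y u)u}=0$; everything else reduces to routine Cauchy--Schwarz inequalities in the weighted norms $|\cdot|_P$, $|\cdot|_{H_Q^\infty}$ and $|\cdot|_{X_Q^\infty}$.
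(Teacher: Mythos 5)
Your proof is correct and follows essentially the same route as the paper: weighted Cauchy--Schwarz in the $|\cdot|_{H_Q^\infty}$, $|\cdot|_{X_Q^\infty}$, $|\cdot|_P$ norms for continuity, the lower bound $\inty{D\nabla_y u\cdot\nabla_y u}\ge\alpha|\nabla_y u|_P^2$ from \eqref{E46} together with the vanishing of the transport pairing for coercivity, and Theorem~\ref{AveMatField} items 3 and 4 to transfer everything to $\ave{D}$. The only difference is that the paper disposes of $\inty{(b\cdot\nabla_y u)u}$ in a single phrase (``thanks to the anti-symmetry of $b\cdot\nabla_y$''), whereas you justify it by a cutoff/truncation argument; your argument is sound and fills in a step the paper leaves implicit.
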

\begin{proof}$\;$\\
1. For any $u, v \in H^1_R$ we have
\begin{align*}
|D(y) \nabla _y u \cdot \nabla _y v | & = | Q^{1/2}(y)D(y) Q^{1/2}(y) : (P^{1/2} \nabla _y v ) \otimes (P^{1/2} \nabla _y u) | \\
& \leq |D|_{H_Q ^\infty} |P^{1/2} (y) \nabla _y v| \;|P^{1/2}(y) \nabla _y u|,\;\;y \in \R^m
\end{align*}
and
\[
|b(y) \cdny u \;v(y) | = |Q^{1/2} (y) b(y) \cdot P^{1/2}(y) \nabla _y u\;v(y) | \leq |b|_{X_Q ^\infty} |P^{1/2} (y) \nabla _y u |\;|v(y) |,\;\;y \in \R^m.
\]
Therefore, it is easily seen, thanks to \eqref{E47}, that 
\begin{align*}
|a^\eps (u,v) | & \leq |D|_{H_Q ^\infty} |\nabla _y u |_P |\nabla _y v |_P + \frac{1}{\eps} |b|_{X_Q ^\infty} |\nabla _y u |_P \|v \|_{\lty{}} \\
& \leq \left ( |D|_{H_Q ^\infty} + \frac{1}{\eps} |b|_{X_Q ^\infty}\right ) |u|_R |v|_R
\end{align*}
saying that the bilinear application $a^\eps(\cdot, \cdot)$ is well defined and continuous. We inquire now about the coercivity of $a^\eps$ on $H^1 _R$, with respect to $\lty$. For any $u \in H^1_R$ we have, thanks to the anti-symmetry of $b\cdny $
\begin{align*}
a^\eps (u,u) & + \alpha \|u \|^2 _{\lty{}}  = \inty{D(y) \nabla _y u \cdot \nabla _y u } + \alpha \|u \|^2 _{\lty{}} \\
& = \inty{Q^{1/2}(y) D(y) Q^{1/2}(y) : (P^{1/2}(y) \nabla _y u ) \otimes (P^{1/2}(y) \nabla _y u )} + \alpha \|u \|^2 _{\lty{}}\\
& \geq \alpha \inty{|P^{1/2}(y) \nabla _y u |^2 } + \alpha \|u \|^2 _{\lty{}} \\
& = \alpha \left ( |\nabla _y u |_P ^2 + \|u \|^2 _{\lty{}} \right )= \alpha |u|^2 _R.
\end{align*}
We emphasize the following inequality, which will be used several times in the sequel
\begin{equation}
\label{EquCoerH1R}
D(y)\nabla _y u \cdot \nabla _y u \geq \alpha |\nry u |^2,\;\;u \in H^1_R.
\end{equation}
2. Follow the same lines as before by using the third and fourth assertions of Theorem \ref{AveMatField}, that is
\[
Q^{1/2}(y) \ave{D}(y) Q^{1/2}(y) \geq \alpha I_m,\;\;y \in \R^m
\]
and $|\ave{D}|_{H_Q^\infty} \leq |D|_{H_Q^\infty}$. 
\end{proof}
\begin{pro}
\label{GenEst}
Assume that the hypotheses \eqref{Equ15}, \eqref{Equ17}, \eqref{Equ58}, \eqref{Equ58Bis}, \eqref{E45}, \eqref{E46} hold true. There exists $\ue$ (resp. $v$) a unique variational solution of \eqref{Equ1} (resp. \eqref{Equ14}). Moreover,  we have
\[
\|\ue\|_{\litlty} \leq \|\uin \|_{\lty},\;\;\|\nabla _y \ue \|_{\lttxp} \leq \frac{\|\uin \|_{\lty}}{\sqrt{2\alpha}},\;\;\eps >0
\] 
and
\[
\|v\|_{\litlty} \leq \|\uin \|_{\lty},\;\;\|\nabla _z v \|_{\lttxp} \leq \frac{\|\uin \|_{\lty}}{\sqrt{2\alpha}}.
\] 
\end{pro}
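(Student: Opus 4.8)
\textbf{Proof proposal for Proposition \ref{GenEst}.}

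The plan is to apply the standard Lions--Lax--Milgram theory for parabolic problems, using Proposition \ref{Coercivity} as the essential ingredient. We have the Gelfand triple $H^1_R \hookrightarrow \lty \hookrightarrow (H^1_R)'$ with continuous and dense embeddings. For the equation \eqref{Equ1}, we rewrite it weakly: $u^\varepsilon \in L^2_{\mathrm{loc}}(\R_+;H^1_R)$ with $\partial_t u^\varepsilon \in L^2_{\mathrm{loc}}(\R_+;(H^1_R)')$ is a variational solution if
\[
\frac{\md}{\md t}\inty{\ue v} + a^\eps(\ue, v) = 0 \quad \text{in } \mathcal{D}'(\R_+),\quad \forall v \in H^1_R,
\]
together with $\ue(0,\cdot) = \uin$. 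Since $a^\eps$ is continuous and coercive on $H^1_R$ with respect to $\lty$ (Proposition \ref{Coercivity}, item 1), the Lions theorem gives existence and uniqueness of such a solution for every $\eps>0$. The same argument applied with $\ave{a}$ in place of $a^\eps$, invoking item 2 of Proposition \ref{Coercivity}, yields existence and uniqueness for \eqref{Equ14}.

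Next I would establish the a priori estimates by the usual energy method. Taking $v = \ue(t,\cdot)$ as test function (which is licit after a regularization/Steklov-averaging argument, or directly from the weak formulation interpreted through the duality pairing) gives
\[
\frac{1}{2}\frac{\md}{\md t}\|\ue(t,\cdot)\|^2_{\lty} + a^\eps(\ue(t,\cdot), \ue(t,\cdot)) = 0.
\]
The antisymmetry of $b\cdny$ on $H^1_R$ kills the singular term, so $a^\eps(\ue,\ue) = \inty{D\nabla_y\ue\cdot\nabla_y\ue} \geq \alpha|\nry \ue|^2 = \alpha|\nabla_y\ue|_P^2$ by \eqref{EquCoerH1R}. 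Hence
\[
\frac{1}{2}\frac{\md}{\md t}\|\ue(t,\cdot)\|^2_{\lty} + \alpha|\nabla_y\ue(t,\cdot)|_P^2 \leq 0.
\]
Dropping the gradient term and integrating yields $\|\ue(t,\cdot)\|_{\lty}\leq\|\uin\|_{\lty}$ for all $t\geq 0$, hence the bound in $\litlty$. Integrating the full inequality over $\R_+$ instead gives $\|\ue(t,\cdot)\|^2_{\lty}\big|_0^{+\infty} + 2\alpha\int_0^{+\infty}|\nabla_y\ue(t,\cdot)|_P^2\,\md t \leq 0$, so $2\alpha\|\nabla_y\ue\|^2_{\lttxp}\leq\|\uin\|^2_{\lty}$, which is the second estimate. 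The estimates for $v$ are obtained identically, using $\ave{a}(v,v)\geq\alpha|\nabla_z v|_P^2$, which follows from the third assertion of Theorem \ref{AveMatField} exactly as \eqref{EquCoerH1R} was derived.

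The only genuine subtlety — and the step I would treat with the most care — is justifying the energy identity, i.e.\ that the test function $v=\ue(t,\cdot)$ is admissible and that $\langle\partial_t\ue,\ue\rangle = \tfrac12\tfrac{\md}{\md t}\|\ue\|^2_{\lty}$. This is the classical Lions--Magenes lemma: since $\ue\in L^2_{\mathrm{loc}}(\R_+;H^1_R)$ and $\partial_t\ue\in L^2_{\mathrm{loc}}(\R_+;(H^1_R)')$, the function $t\mapsto\|\ue(t,\cdot)\|^2_{\lty}$ is absolutely continuous and its derivative is $2\langle\partial_t\ue(t),\ue(t)\rangle_{(H^1_R)',H^1_R}$. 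Everything else is routine. I would also remark that the coercivity constant and the continuity bounds depend on $\eps$ (through $\tfrac1\eps|b|_{X_Q^\infty}$), which is harmless here since $\eps$ is fixed; the $\eps$-uniformity of the final estimates comes precisely from the cancellation of the singular term, and this uniformity is what makes the subsequent compactness arguments possible.
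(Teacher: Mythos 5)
Your proposal is correct and follows essentially the same route as the paper: existence and uniqueness via the Lions variational theory (the paper cites Dautray--Lions, you cite the same machinery through Proposition \ref{Coercivity}), followed by the energy identity to get the $L^\infty$-in-time and $L^2$-in-time-gradient bounds, with the coercivity inequality \eqref{EquCoerH1R} killing the singular term. Your extra remark on the Lions--Magenes lemma justifying $\langle \partial_t \ue, \ue\rangle = \tfrac12\tfrac{\md}{\md t}\|\ue\|^2_{\lty}$ is a welcome clarification but does not change the argument.
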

\begin{proof}
By Theorems 1, 2 of \cite{DauLions00} p. 513, see also \cite{LioMag68}, we deduce that for any $\uin \in \lty$, there is a unique variational solution $\ue$ for the problem \eqref{Equ1}, that is $\ue \in C_b(\R_+;\lty) \cap L^2(\R_+;H^1_R)$, $\partial _t \ue \in L^2(\R_+; (H^1_R)^\prime)$ and
\[
\ue (0) = \uin,\;\;\frac{\md }{\md t} \inty{\ue (t,y) \varphi (y)} + a^\eps (\ue(t), \varphi) = 0,\;\;\mbox{in } \mathcal{D}^\prime (\R_+),\mbox{ for any } \varphi \in H^1_R.
\]
Similarly, there is a unique variational solution $v$ for the limit model \eqref{Equ14}, that is $v \in C_b(\R_+;\lty) \cap L^2(\R_+;H^1_R)$, $\partial _t v \in L^2(\R_+; (H^1_R)^\prime)$ and
\[
v (0) = \uin,\;\;\frac{\md }{\md t} \inty{v (t,z) \psi (z)} + \ave{a} (v(t), \psi) = 0,\;\;\mbox{in } \mathcal{D}^\prime (\R_+),\mbox{ for any } \psi \in H^1_R.
\]
The above estimates come immediately by the energy balance
\[
\frac{1}{2} \frac{\md }{\md t} \|\ue (t) \|^2 _{\lty{}} + a^\eps (\ue (t), \ue (t)) = 0,\;\;\mbox{ in } \mathcal{D}^\prime (\R_+)
\]
which implies 
\[
\frac{1}{2}\|\ue (t) \|^2 _{\lty{}} + \int _0 ^t a^\eps (\ue (\tau), \ue (\tau))\;\md \tau = \frac{1}{2} \|\uin \|^2 _{\lty{}},\;\;t \in \R_+.
\]
In particular we deduce $\|\ue (t) \|_{\lty} \leq \|\uin \|_{\lty}$, for any $t \in \R_+, \eps >0$, and 
\[
2\alpha \int _0 ^t |\nabla _y \ue (\tau) |_P ^2 \;\md \tau \leq 2 \int _0 ^t a^\eps (\ue (\tau), \ue (\tau))\;\md \tau \leq \|\uin \|^2 _{\lty{}},\;\;t \in \R_+,\;\;\eps >0
\]
saying that $\|\nabla _y \ue \|_{\lttxp} \leq \frac{\|\uin \|_{\lty}}{\sqrt{2\alpha}}$, for any $\eps >0$. The estimates for $v$ follow similarly, using the energy balance
\[
\frac{1}{2} \frac{\md }{\md t} \|v(t) \|^2 _{\lty} + \ave{a} (v(t), v(t)) = 0,\;\;\mbox{in } \mathcal{D}^\prime (\R_+)
\]
and the inequality $\ave{a} (v(t), v(t)) \geq \alpha |\nabla _z v(t) |^2 _P$. 
\end{proof}
\begin{remark}
\label{Estveps}
The family $(\ve(t,\cdot))_{\eps >0} = (\ue (t, Y(t/\eps;\cdot)))_{\eps>0}$ satisfies the same estimates as the family $(\ue)_{\eps>0}$. Indeed, performing the change of variable $y = \ytz$, which is measure preserving, one gets
\[
\|\ve (t)\|^2 _{\lty} = \intz{(\ue (t, \ytz))^2} = \inty{(\ue (t,y))^2} \leq \|\uin \|^2 _{\lty},\;\;t \in \R_+,\;\;\eps >0.
\]
Notice that we have, thanks to \eqref{E43}
\begin{align*}
\nrz \ve (t) & = \;^t R^{-1} (z) \nabla _z \ve (t) = \;^t R^{-1}(z) \;^t \partial \ytz \nabla _y \ue (t, \ytz) \\
& = \;^t ( \partial \ytz R^{-1} (z)) \nabla _y \ue (t, \ytz) \\
& = \;^t R^{-1} (\ytz) \nabla _y \ue (t, \ytz) = \left ( \nry \ue (t)\right) (\ytz)
\end{align*}
and therefore 
\begin{align*}
\|\nabla _z \ve \|^2 _{\lttxp} & = \int _0 ^{+\infty} |\nabla _z \ve (\tau)|^2_P \;\md \tau = \int _0 ^{+\infty} \|\nrz \ve (\tau)\|^2 _{\lty} \;\md \tau \\
& = \int _0 ^{+\infty} \|\nry \ue (\tau) \|^2 _{\lty} \;\md \tau = \int _0 ^{+\infty}|\nabla _y \ue (\tau )|^2_P \;\md \tau = \|\nabla _y \ue \|^2 _{\lttxp}\\
& \leq \frac{\|\uin \|^2 _{\lty}}{2\alpha},\;\;\eps >0.
\end{align*}
Using twice the formula
\[
b_i \cdot \nabla _z \ve (t) = b_i \cdot \nabla _z ( \ue (t) \circ \yt) =  ( b_i \cdny \ue (t)) \circ \yt
\]
we deduce that 
\[
b_j \cdot \nabla _z ( b_i \cdot \nabla _z \ve (t))  = b_j \cdot \nabla _z \left [ ( b_i \cdny \ue (t)) \circ \yt\right ] = \left [ b_j \cdny (b_i \cdny  \ue (t))\right] \circ \yt.
\]
Therefore $\|b_j \cdot \nabla _z (b_i \cdot \nabla _z \ve (t))\|_{\lty} = \|b_j \cdny ( b_i \cdny \ue (t))\|_{\lty}$, $i, j \in \{1,...,m\}$ anytime that $\ue (t) \in H^2_R$. 
\end{remark}
Up to now, we have considered solutions with initial condition $\uin \in \lty$. In order to study the stability of the family $(\ve)_{\eps>0}$ when $\eps$ goes to $0$, we need more regularity. This will be the object of the next propositions, in which we analyze how the regularity of the initial condition propagates in time. The idea is to take the directional derivative $b_i \cdny $ of \eqref{Equ1},  leading to
\begin{equation}
\label{E48}
\partial _t (b_i \cdny \ue) - \divy (D(y) \nabla _y (b_i \cdny \ue)) + \frac{1}{\eps} b \cdny (b_i \cdny \ue) = [b_i \cdny, \divy (D \nabla _y )]\ue.
\end{equation}
Notice that the key point was to take advantage of the involution between $b_i$ and $b$, for any $i \in \{1, ...,m\}$, which guarantees that there is no commutator between the first order operators $b_i \cdny$ and $b \cdny$. More generally, if we apply the directional derivative $c \cdny$ in \eqref{Equ1}, the right hand side of the corresponding equation in \eqref{E48} will contain the extra term $\frac{1}{\eps} [b\cdny, c \cdny ]\ue$, which is clearly unstable, when $\eps$ goes to $0$, if $b$ and $c$ are not in involution. The estimate for $b_i \cdny \ue$ follows by using the energy balance of \eqref{E48}, observing that, thanks to the anti-symmetry of $b \cdny$, we get rid of the term of order $1/\eps$. We assume that for any $i, j \in \{1, ..., m\}$, the coordinates of the Poisson bracket  $[b_i, b_j]$ in the basis $(b_k)_{1\leq k \leq m}$ are bounded
\begin{equation}
\label{E49}
[b_i, b_j] = \sum _{k = 1}^m \alpha _{ij} ^k b_k,\;\;\alpha _{ij} ^k \in \liy{},\;i,j,k \in \{1, ...,m\}.
\end{equation} 
\begin{pro}
\label{MoreEstimates}
Assume that the hypotheses \eqref{Equ15}, \eqref{Equ17}, \eqref{Equ58}, \eqref{Equ58Bis}, \eqref{E45}, \eqref{E46}, \eqref{E49} hold true. Moreover we assume that for any $i,j \in \{1,...,m\}$
\[
b_i \cdny \divy b_j \in \liy,\;\;\divy (RD) \in \liy{}
\]
\[
R[b_i,D]\;^t R \in \liy{},\;\;\sum _{i = 1} ^m b_i \cdny (R [b_i,D]\;^t R) \in \liy.
\]
If the initial condition belongs to $H^1_R$, then we have for any $T \in \R_+$
\[
\supe \|\nry \ue \|_{L^\infty([0,T];\lty)} = \supe \|\nrz \ve \|_{L^\infty([0,T];\lty)} <+\infty
\]
\[
\supe \|\nry \otimes \nry \ue \|_{L^2([0,T];\lty)} = \supe \|\nrz \otimes \nrz\ve \|_{L^2([0,T];\lty)} <+\infty
\]
\[
\supe \|\partial _t \ve \|_{L^2([0,T];\lty)} <+\infty.
\]
Here the notation $\nabla ^R \otimes \nabla ^R w$ stands for the matrix whose entry $(i,j)$ is $b _j \cdot \nabla (b_i \cdot \nabla w)$, $i,j \in \{1, ...,m\}$. 
\end{pro}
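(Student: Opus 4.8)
The plan is to propagate the regularity of the initial datum by differentiating the equation along the commuting directions $b_i$ and running energy estimates, carefully tracking that the singular $1/\eps$ term always disappears thanks to the anti-symmetry of $b\cdot\nabla_y$. First I would rigorously derive \eqref{E48}: applying $b_i\cdot\nabla_y$ to \eqref{Equ1} and using the involution $[b_i,b]=0$ (which is precisely \eqref{E42}, equivalently \eqref{E43}), there is no commutator between $b_i\cdot\nabla_y$ and $b\cdot\nabla_y$, so the stiff term becomes $\frac1\eps b\cdot\nabla_y(b_i\cdot\nabla_y\ue)$ with no remainder. The commutator $[b_i\cdot\nabla_y,\divy(D\nabla_y)]\ue$ on the right must be expanded: using $\nabla_y={}^tR\,\nry$ and $\divy(D\nabla_y\cdot)$ rewritten in terms of $R$, $\divy(RD)$ and $R[b_i,D]\,{}^tR$, one sees that the extra regularity hypotheses on $b_i\cdot\divy b_j$, $\divy(RD)$, $R[b_i,D]\,{}^tR$ and $\sum_i b_i\cdot\nabla_y(R[b_i,D]\,{}^tR)$ are exactly what is needed to bound this commutator in terms of $\ue$, $\nry\ue$ and to handle the energy integration by parts. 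This is the step I expect to be the main obstacle: expanding the commutator cleanly, controlling it by $C(\|\ue\|_{\lty}+\|\nry\ue\|_{\lty})$ (so that it is absorbed by Gr\"onwall, using the uniform bounds of Proposition \ref{GenEst}), and justifying every integration by parts for merely variational solutions — which in practice forces a regularization/approximation argument (e.g. regularize $\uin$, or work with a Galerkin/Faedo approximation, pass to the limit) to make the formal manipulations legitimate.

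Once \eqref{E48} is established, testing it against $b_i\cdot\nabla_y\ue$ (summed over $i$) and using the coercivity inequality \eqref{EquCoerH1R} gives, after the $1/\eps$ term cancels,
\[
\frac12\frac{\md}{\md t}\sum_{i}\|b_i\cdot\nabla_y\ue(t)\|^2_{\lty}+\alpha\sum_i|\nry(b_i\cdot\nabla_y\ue(t))|^2_P\le \sum_i\big|\big([b_i\cdot\nabla_y,\divy(D\nabla_y)]\ue,\,b_i\cdot\nabla_y\ue\big)\big|,
\]
and the right-hand side is controlled by $C(\|\ue(t)\|_{\lty}^2+\|\nry\ue(t)\|^2_{\lty})$ plus, where second-order terms appear, a fraction of $\sum_i|\nry(b_i\cdot\nabla_y\ue)|^2_P$ absorbed into the left-hand side (Young's inequality, using $\alpha>0$). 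Here the hypothesis \eqref{E49}, that $[b_i,b_j]$ has bounded coordinates $\alpha_{ij}^k$ in the basis $(b_k)$, is used to commute the fields $b_i$ when they act on each other and to keep all the resulting first-order terms within $\nry\ue$. Integrating in $t$ and invoking Gr\"onwall together with $\|\nry\uin\|_{\lty}=\|\nry\uin(\cdot)\|_{\lty}<\infty$ (since $\uin\in H^1_R$) yields the uniform-in-$\eps$ bound $\supe\|\nry\ue\|_{L^\infty([0,T];\lty)}<\infty$, and simultaneously, from the absorbed coercive term, $\supe\|\nry\otimes\nry\ue\|_{L^2([0,T];\lty)}<\infty$.

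Finally, the bound on $\partial_t\ve$ follows from \eqref{Equ8}: $\partial_t\ve=\divz((G(t/\eps)D)\nabla_z\ve)$, and since $G(t/\eps)D\in H_Q^\infty$ with $|G(t/\eps)D|_{H_Q^\infty}=|D|_{H_Q^\infty}$ (the invariance of $H_Q^\infty$ under $(G(s))_{s\in\R}$ established before Theorem \ref{vonNeumann}), one estimates $\|\partial_t\ve(t)\|_{\lty}$ by the $X_P$-norms of $\nabla_z\ve$ and $\nabla_z\otimes\nabla_z\ve$, which via Remark \ref{Estveps} coincide with $|\nry\ue|_P$ and the $\nry\otimes\nry$ quantities already bounded; integrating the square in $t$ over $[0,T]$ and using the two previous uniform bounds gives $\supe\|\partial_t\ve\|_{L^2([0,T];\lty)}<\infty$. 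The stated equalities between the $\ue$-norms and the $\ve$-norms are exactly the identities of Remark \ref{Estveps}, so nothing new is needed there; it remains only to assemble the three estimates and note the approximation argument transfers them to the genuine variational solution by weak lower semicontinuity of the norms.
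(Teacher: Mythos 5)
Your overall strategy matches the paper's: differentiate \eqref{Equ1} along the commuting directions $b_i$ to obtain \eqref{E48}, observe that the involution $[b_i,b]=0$ kills the $\mathcal{O}(1/\eps)$ commutator, test against $b_i\cdot\nabla_y\ue$ so that the remaining stiff term disappears by anti-symmetry of $b\cdot\nabla_y$, use the coercivity \eqref{EquCoerH1R} to make $\alpha\|\nry\otimes\nry\ue\|^2$ appear on the left, and finish by Gr\"onwall. The commutator formula \eqref{ComFor} is exactly what you need. Where you depart from the paper is in the treatment of the worst term, $J_i^1=\int R[b_i,D]\,{}^tR:\nry\ue\otimes b_i\cdot\nabla_y(\nry\ue)\,\md y$: you propose a Young absorption, trading a fraction $\tfrac{\alpha}{2}\|\nry\otimes\nry\ue\|^2$ against $C\|\nry\ue\|^2$. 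The paper instead symmetrizes using $D={}^tD$, writes $2J_i^1=\int b_i\cdot\nabla_y\bigl(R[b_i,D]\,{}^tR:\nry\ue\otimes\nry\ue\bigr)\,\md y - \int b_i\cdot\nabla_y(R[b_i,D]\,{}^tR):\nry\ue\otimes\nry\ue\,\md y$, and integrates by parts so that the right-hand side of the energy inequality \eqref{2} contains only first-order terms in $\nry\ue$ (this is precisely what the hypothesis $\sum_i b_i\cdot\nabla_y(R[b_i,D]\,{}^tR)\in\liy$ is used for). Both routes yield the stated conclusion: your version actually consumes fewer of the listed hypotheses (the $\sum_i b_i\cdot\nabla_y(\cdot)$ bound becomes superfluous after Young), at the price of halving the coercive constant in front of the $L^2_t$ second-derivative bound; the paper's version uses every hypothesis and keeps the full coercivity. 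Your $\partial_t\ve$ estimate is the same as the paper's up to working with the $z$-form $\divz(G(t/\eps)D\nabla_z\ve)$ instead of pulling back to $\divy(D\nabla_y\ue)$; note you implicitly need the identity $\divz(RG(s)D)=\tau(s)\divy(RD)$ (cf.\ \eqref{DivRG}) to bring $\|\divy(RD)\|_{\liy}$ into play — you did list this hypothesis, so the reasoning is sound, just not spelled out. The regularization/Galerkin caveat you raise is exactly the point the paper handles informally (``we justify the estimates only for smooth solutions and coefficients'').
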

\begin{proof}
We want to estimate the $L^2$ norms of $b_i \cdny \ue$, $i \in \{1, ...,m\}, \eps >0$. This can be done by analyzing the translations along the flows $Y_i$ and estimating the $L^2$ norms of $( \ue (t,Y_i(h;y)) - \ue (t,y))/h$ uniformly with respect to $h \in \R^\star$ and $\eps >0$. For simplicity, we justify the estimates only for smooth solutions and coefficients (and therefore we use clasical derivatives). The general case is left to the reader. We need to compute the commutator between a first order operator $c \cdny$ and the diffusion operator $\divy (D \nabla _y)$. Here $c(y)$ is a vector field, not necessarily in involution with the vector field $b(y)$ (the involution does not play any role when computing $[c\cdny, \divy (D \nabla _y)]$). A straightforward  computation shows that the commutator between $c \cdny$ and $\divy$ is given by
\[
[c\cdny, \divy] \xi = \xi \cdny \divy c - \divy (\partial _y c \;\xi),\;\;\xi \in (C^2 (\R^m))^m.
\]
Using the above formula with $\xi = D(y) \nabla _y \ue$, one gets
\begin{align}
\label{E50}
c\cdny (\divy (D(y) \nabla _y \ue )) - \divy (c\cdny (D(y) \nabla _y \ue)) & = D(y) \nabla _y \ue \cdot \nabla _y \divy c \\
& - \divy (\partial _y c \,D(y) \nabla _y \ue).\nonumber 
\end{align}
Taking into account that 
\begin{align*}
c \cdny (D(y) \nabla _y \ue)  & = (c \cdny D) \nabla _y \ue + D(y)(\partial ^2 \ue) c(y) \\
& = (c \cdny D) \nabla _y \ue + D(y) \nabla _y ( c \cdny \ue) - D(y) \;^t \partial _y c \nabla _y \ue
\end{align*}
we deduce by \eqref{E50}
\begin{align*}
c\cdny (\divy (D(y) \nabla _y \ue)) & - \divy (D(y) \nabla _y (c\cdny \ue)) = D(y) \nabla _y \ue \cdot \nabla _y \divy c\\
& + \divy ( ( c \cdny D - \partial _y c D(y)  - D(y) \;^t \partial _y c ) \nabla _y \ue ).
\end{align*}
Finally the commutator between $c \cdny $ and $\divy (D(y) \nabla _y)$ writes
\begin{equation}
\label{ComFor}
[c \cdny, \divy (D(y) \nabla _y)]\ue = \divy ([c,D]\nabla _y \ue ) + D(y) \nabla _y \ue \cdot \nabla _y \divy c.
\end{equation}
Multiplying \eqref{E48} by $b_i \cdny \ue$, integrating with respect to $y$ over $\R^m$ and observing that the contribution of the singular term $\frac{1}{\eps} b\cdny (b_i \cdny \ue)$ cancels by the anti-symmetry of the operator $b\cdny $, yield
\begin{align}
\label{E51}
\frac{1}{2}\frac{\md }{\md t} \inty{(b_i \cdny \ue  (t))^2 } & + \inty{D(y) \nabla _y (b_i \cdny \ue (t)) \cdny (b_i \cdny \ue (t))} \\
= & - \inty{[b_i,D]\nabla _y \ue (t) \cdot \nabla _y (b_i \cdny \ue (t))} \nonumber \\
& + \inty{D(y) \nabla _y \ue (t) \cdny \divy b_i \;(b_i \cdny \ue (t))}.\nonumber
\end{align}
By hypothesis \eqref{E46} we have
\begin{align}
\label{E52}
D \nabla _y ( b_i \cdny \ue (t)) \cdny ( b_i \cdny \ue (t)) & = Q^{1/2}DQ^{1/2} \\
& : P^{1/2} \nabla _y (b_i \cdny \ue (t)) \otimes P^{1/2} \nabla _y (b_i \cdny \ue (t))\nonumber \\
& \geq \alpha |P^{1/2} \nabla _y (b_i \cdny \ue (t))|^2 = \alpha |\nry (b_i \cdny \ue (t))|^2 \nonumber \\
& = \alpha \sum _{j = 1} ^m (b_j \cdny (b_i \cdny \ue (t)))^2.\nonumber 
\end{align}
Combining \eqref{E51}, \eqref{E52} leads to
\begin{align}
\label{2}
&\frac{1}{2}\frac{\md}{\md t}\left\| \nry \ue(t)\right\|^{2}_{L^{2}(\R^{m})} +\alpha\,\left\| \nry \otimes\nry \ue(t) \right\|_{L^{2}(\R^{m})}^{2} \\
& \leq -\sum_{i=1}^{m}\underbrace{\inty{[b_{i},\,D]\nabla_{y}\ue(t)\cdot \nabla_{y}(b_{i}\cdot\nabla_{y}\ue(t))}}_{:=I^{1}_{i}} \nonumber \\
& + \sum_{i=1}^{m}\underbrace{\inty{D\nabla_{y}\ue(t)\cdot \nabla_{y}(\divy b_{i})\,(b_{i}\cdot\nabla_{y}\ue(t))}}_{:=I^{2}_{i}}\nonumber.
\end{align}
In order to upper bound the term $I^{1}_{i}$ in the right hand side of \eqref{2} we write 
\begin{align}
\label{E53}
I_{i}^{1}=&\int_{\mathbb{R}^{m}}R\,[b_{i},\,D]\,{}^{t}R : {}^{t}R^{-1}\nabla_{y}u^{\eps}(t) \otimes {}^{t}R^{-1}\nabla_{y}(b_{i}\cdot\nabla_{y}u^{\eps}(t))\,\text{d}y\\
=&\int_{\mathbb{R}^{m}}R\,[b_{i},\,D]\,{}^{t}R : \nry u^{\eps}(t) \otimes \nry(b_{i}\cdot\nabla_{y}u^{\eps}(t))\,\text{d}y.\nonumber 
\end{align}
Notice that for any $j \in \{1,...,m\}$ we have
\begin{align*}
b_j \cdny (b_i \cdny \ue (t)) & = b_i \cdny (b_j \cdny \ue (t)) - [b_i,b_j]\cdny \ue (t) \\
& = b_i \cdny (b_j \cdny \ue (t)) - \sum _{k=1} ^m \alpha _{ij}^k b_k \cdny \ue (t)
\end{align*}
and therefore we obtain
\begin{equation}
\label{E54}
\nry(b_{i}\cdot\nabla_{y}\ue(t)) = b_{i}\cdot\nabla_{y}(\nry \ue (t))-\mathcal{A}_i \nry \ue (t)
\end{equation}
where the entry $(j,k)$ of the matrix $\mathcal{A}_i$ is given by $\alpha _{ij} ^k$. Combining \eqref{E53}, \eqref{E54} yields
\begin{align*}
I_{i}^{1}=&\int_{\mathbb{R}^{m}}R\,[b_{i},\,D]\,{}^{t}R : \nry u^{\eps}(t) \otimes b_{i}\cdot\nabla_{y}(\nry u^{\eps}(t))\,\text{d}y \\
-& \int_{\mathbb{R}^{m}}R\,[b_{i},\,D]\,{}^{t}R : \nry u^{\eps}(t) \otimes \mathcal{A}_i \nry \ue (t)\,\text{d}y\\
=:&J_{i}^{1} + J_{i}^{2}.
\end{align*}
For estimating the term $J^1_i$, we use the symmetry of the matrix field $D$ and the formula
\begin{align*}
b_{i}\cdot\nabla_{y}\big(R\,[b_{i},\,D]\,{}^{t}R : \nry u^{\eps}(t)\otimes \nry u^{\eps}(t)\big) & = b_i \cdny ( R\,[b_{i},\,D]\,{}^{t}R ) : \nry u^{\eps}(t)\otimes \nry u^{\eps}(t) \\
& + 2R[b_i,D]\;^tR : \nry \ue (t) \otimes b_i \cdny (\nry \ue (t)).
\end{align*}
Integrating by parts leads to
\begin{align*}
2 \left |\sum _{i =1 } ^m J^1_i\right | & = \left | - \sum _{i =1 } ^m \inty{\divy b_i \,R\,[b_{i},\,D]\,{}^{t}R : \nry u^{\eps}(t)\otimes \nry u^{\eps}(t)} \right.\\
& \left. - \inty{\sum _{i = 1} ^m b_i \cdny (R\,[b_{i},\,D]\,{}^{t}R ): \nry u^{\eps}(t)\otimes \nry u^{\eps}(t)} \right |\\
& \leq \left ( \sum _{i = 1} ^m \|\divy b_i \|_{L^\infty} \|R[b_i,D]\,^tR\|_{L^\infty} + \left \|\sum _{i = 1} ^m b_i \cdny (R[b_i,D] \,^t R)\right \|_{L^\infty} \right )\\
& \times  \|\nry \ue (t) \|_{\lty} ^2.
\end{align*}
The estimate for the term $J_i ^2$ follows immediately, thanks to the hypothesis \eqref{E49}
\[
|J_i ^2| \leq \|R[b_i,D]\,^t R \|_{\liy} \|\mathcal{A}_i\|_{\liy} \|\nry \ue (t) \|_{\lty} ^2
\]
and finally there is a constant $C_1$ depending on $\max _{1\leq i \leq m} \|\divy b_i \|_{L^\infty}$, $\max _{1\leq i \leq m}\|\mathcal{A}_i \|_{L^\infty}$, $\max _{1\leq i \leq m}\|R [b_i,D]\,^t R\|_{L^\infty}$, $\|\sum _{i = 1} ^m b_i \cdny ( R [b_i, D]\,^t R)\|_{L^\infty}$ such that 
\begin{equation}
\label{I1}
\left | \sum _{i = 1} ^m I_i ^1 \right | \leq C_1 \|\nry \ue (t) \|_{\lty} ^2,\;\;t \in \R_+,\;\;\eps >0.
\end{equation}
For the term $I_i^2$ we can write, using the inequality $(R(y)D(y)\,^t R(y): R(y)D(y)\,^t R(y))^{1/2} \leq |D|_{H_Q^\infty}, y \in \R^m$
\begin{align*}
|I_i ^2| & = \left | \inty{RD \,^t R\,^t R^{-1} \nabla _y \ue (t) \cdot \,^t R^{-1} \nabla _y (\divy b_i)\, (b_i \cdny \ue (t))}\right|\\
& \leq |D|_{H_Q^\infty} \|\nry \divy b_i\|_{\liy} \inty{|\nry \ue (t)|\,|b_i \cdny \ue (t) |}\\
& \leq |D|_{H_Q^\infty} \|\nry \divy b_i\|_{\liy}\|\nry \ue (t)\|_{\lty} ^2
\end{align*}
implying that there is a constant $C_2$ depending on $|D|_{H_Q^\infty}, \max _{1\leq i \leq m}\|\nry \divy b_i\|_{\liy}$ such that 
\begin{equation}
\label{I2} 
\left | \sum _{i = 1} ^m I_i ^2 \right | \leq C_2 \|\nry \ue (t) \|_{\lty} ^2,\;\;t \in \R_+,\;\;\eps >0.
\end{equation}
Combining \eqref{2}, \eqref{I1}, \eqref{I2}  and applying Gronwall's lemma imply
\[
\|\nry \ue (t) \|_{\lty} \leq e ^{(C_1 + C_2)t}\|\nry \uin \|_{\lty},\;\;t \in \R_+,\;\;\eps >0
\]
and
\[
\|\nry \otimes \nry \ue \|_{L^2([0,T];\lty)} \leq \frac{e ^{(C_1 + C_2)t}}{\sqrt{2\alpha}}\|\nry \uin \|_{\lty},\;\;\eps >0.
\]
The first and second conclusions follow thanks to Remark \ref{Estveps}. For the last one, notice that 
\begin{align}
\label{DerTimeVeps} 
\partial _t \ve (t,z) & = \partial _t \ue (t, \ytz) + \frac{1}{\eps} b(\ytz) \cdny \ue (t, \ytz) \\
& = \divy(D\nabla _y \ue (t)) (\ytz) \nonumber
\end{align}
which implies 
\[
\|\partial _t \ve (t) \|_{\lty} = \|\divy ( D \nabla _y \ue (t))\|_{\lty}.
\]
By direct computation we obtain
\begin{align*}
\divy (D \nabla _y \ue ) & = \divy (D\,^t R \nry \ue ) = \divy(RD) \cdot \nry \ue + RD\,^t R : \partial _y (\nry \ue) R^{-1}\\
& = \divy (RD) \cdot \nry \ue + RD\,^t R : \nry \otimes \nry \ue
\end{align*}
and therefore
\begin{align*}
\supe \|\partial _t \ve \|_{L^2([0,T];\lty)} & = \supe \|\divy(D\nabla _y \ue ) \|_{L^2([0,T];\lty)} \\
& \leq \sqrt{T} \|\divy (RD)\|_{\liy} \supe \|\nry \ue \|_{L^\infty([0,T];\lty)} \\
& + |D|_{H_Q^\infty} \supe \|\nry \otimes \nry \ue\|_{L^2([0,T];\lty)},\;\;T \in \R_+.
\end{align*}
\end{proof}
Performing similar computations, we can propagate more regularity. The goal is to obtain a uniform bound for $(\partial _t \nrz \ve)_{\eps >0}$ in $L^2_{\mathrm{loc}}(\R_+;\lty)$. This can be achieved for any initial condition $\uin \in H^2 _R$. The proof is postponed to Appendix \ref{A}.
\begin{pro}
\label{MoreEstimatesBis}
Assume that the hypotheses \eqref{Equ15}, \eqref{Equ17}, \eqref{Equ58}, \eqref{Equ58Bis}, \eqref{E45}, \eqref{E46}, \eqref{E49} hold true. Moreover we assume that for any $i, j, k \in \{1,...,m\}$
\[
\nry \alpha _{ij} ^k \in \liy,\;\;\nry \divy b_j \in \liy,\;\;\nry \otimes \nry \divy b_j \in \liy,\;
\]
\[
\divy (R[b_i,D]) \in \liy,\;\;R[b_i,D]\,^t R \in \liy, \;\;\sum_{i=1}^m b_i \cdny (R[b_i,D]\,^t R ) \in \liy
\]
\[
R[b_j,[b_i,D]]\,^t R \in \liy,\;\;\divy (R[b_j,[b_i,D]]) \in \liy 
\]
\[
\nry (RD\,^t R) \in \liy,\;\;\nry \otimes \nry (RD\,^t R)\in \liy.
\]
If the initial condition $\uin$ belongs to $H^2_R$, then for any $T \in \R_+$ we have
\[
\supe \|\nry \otimes \nry \ue \|_{L^\infty([0,T];\lty)} = \supe \|\nrz \otimes \nrz \ve\|_{L^\infty([0,T];\lty)} < +\infty
\]
\[
\supe \|\nry \otimes \nry \otimes \nry \ue \|_{L^2([0,T];\lty)} = \supe \|\nrz \otimes \nrz \otimes \nrz \ve\|_{L^2([0,T];\lty)} < +\infty
\]
and
\[
\supe \|\partial _t \nrz \ve \|_{L^2([0,T];\lty)} < +\infty.
\]
Here the notation $\nabla ^R \otimes \nabla ^R \otimes \nabla ^R w$ stands for the tensor whose entry $(i,j,k)$ is $b_k \cdot \nabla (b_j \cdot \nabla (b_i \cdot \nabla w))$. 
\end{pro}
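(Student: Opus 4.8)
The plan is to repeat, one level higher, the argument of Proposition~\ref{MoreEstimates}: instead of differentiating \eqref{Equ1} once along an involutive direction $b_i$, I would differentiate it twice, along $b_i$ and then $b_j$. Concretely, I would apply $b_j\cdny$ to \eqref{E48}. Since $b_j$ is in involution with $b$, one has $[b\cdny,b_j\cdny]=[b,b_j]\cdny=0$, so applying $b_j\cdny$ to the singular term of \eqref{E48} produces \emph{no} $\tfrac1\eps$ commutator: it remains a pure transport term $\tfrac1\eps b\cdny(b_j\cdny b_i\cdny\ue)$ and will drop out of the energy identity by antisymmetry of $b\cdny$. Using the commutator formula \eqref{ComFor} a second time, together with the identity \eqref{E54} and its analogues for commuting $b_k\cdny$ past $\nabla_y$ and $\nry$, one obtains for $w_{ij}:=b_j\cdny(b_i\cdny\ue)=(\nry\otimes\nry\ue)_{ij}$ an equation of the form
\[
\partial_t w_{ij}-\divy(D\nabla_y w_{ij})+\frac1\eps\,b\cdny w_{ij}=\mathcal{R}_{ij},
\]
where $\mathcal{R}_{ij}$ is a sum of terms in which $[b_i,D]$, $[b_j,[b_i,D]]$, $\divy b_i$, $\divy b_j$, their $\nry$-derivatives, the involution coefficients $\alpha_{kl}^{p}$ and their $\nry$-derivatives, and $RD\,{}^{t}R$ together with its $\nry$-derivatives, are contracted against $\nry\ue$, $\nry\otimes\nry\ue$ and $\nry\otimes\nry\otimes\nry\ue$.

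Next I would multiply by $w_{ij}$, integrate over $\R^m$, sum over $i,j$, use that the contribution of $\tfrac1\eps b\cdny w_{ij}$ vanishes by antisymmetry, and use that the diffusion term is coercive by \eqref{E46}, cf.~\eqref{EquCoerH1R}, so that it dominates $\alpha\|\nry\otimes\nry\otimes\nry\ue(t)\|_{\lty}^{2}$. This gives
\[
\frac12\frac{\md}{\md t}\|\nry\otimes\nry\ue(t)\|_{\lty}^{2}+\alpha\,\|\nry\otimes\nry\otimes\nry\ue(t)\|_{\lty}^{2}\le\sum_{i,j}\inty{\mathcal{R}_{ij}\,(\nry\otimes\nry\ue(t))_{ij}}.
\]
In the right-hand side, the terms that are linear in $\nry\otimes\nry\otimes\nry\ue$ — those coming from $\divy([b_j,D]\nabla_y(b_i\cdny\ue))$ and from the top-order part of $b_j\cdny\divy([b_i,D]\nabla_y\ue)$, handled after one integration by parts exactly as in the $J^1_i$ step of Proposition~\ref{MoreEstimates} — I would bound by Cauchy--Schwarz and Young's inequality as $\le\frac\alpha2\|\nry\otimes\nry\otimes\nry\ue\|_{\lty}^{2}+C\|\nry\otimes\nry\ue\|_{\lty}^{2}$, absorbing the $\frac\alpha2$-part into the left-hand side; this is where the hypotheses $\nry(RD\,{}^{t}R),\ \nry\otimes\nry(RD\,{}^{t}R),\ \divy(R[b_i,D]),\ R[b_j,[b_i,D]]\,{}^{t}R,\ \divy(R[b_j,[b_i,D]])\in\liy$ enter. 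All remaining terms I would bound by $C_3\|\nry\otimes\nry\ue(t)\|_{\lty}^{2}+C_4\|\nry\ue(t)\|_{\lty}^{2}$, with $C_3,C_4$ depending only on the $L^\infty$-norms listed in the statement.

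Since $\supe\|\nry\ue\|_{L^\infty([0,T];\lty)}<+\infty$ by Proposition~\ref{MoreEstimates}, Gronwall's lemma applied to $t\mapsto\|\nry\otimes\nry\ue(t)\|_{\lty}^{2}$ — finite at $t=0$ because $\uin\in H^2_R$ — yields the first conclusion, and integrating the differential inequality on $[0,T]$ together with the absorbed coercive term yields $\supe\|\nry\otimes\nry\otimes\nry\ue\|_{L^2([0,T];\lty)}<+\infty$. The corresponding statements for $\ve$ follow from the composition identities of Remark~\ref{Estveps}: each $\nrz$ applied to a composite with the flow $\yt$ equals the corresponding $\nry$-derivative composed with $\yt$, and $\yt$ is measure preserving. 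For the time derivative, as in \eqref{DerTimeVeps} one has $\partial_t\ve(t,z)=(\divy(D\nabla_y\ue(t)))(\ytz)$; commuting $\partial_t$ with $\nrz$ and using the same composition identity gives $\|\partial_t\nrz\ve(t)\|_{\lty}=\|\nry(\divy(D\nabla_y\ue(t)))\|_{\lty}$. Starting from the expansion $\divy(D\nabla_y\ue)=\divy(RD)\cdot\nry\ue+RD\,{}^{t}R:\nry\otimes\nry\ue$ established inside the proof of Proposition~\ref{MoreEstimates} and differentiating it once more along $\nry$ (using \eqref{E54} to trade $b_k\cdny\nry$ for $\nry\otimes\nry$ up to $\mathcal{A}$-terms, whence the need for $\nry\alpha_{ij}^{k}\in\liy$), every resulting term is a bounded coefficient — built from $\divy(RD)$, $RD\,{}^{t}R$, $[b_i,D]$ and their relevant $\nry$-derivatives, all controlled by the stated hypotheses — times $\nry\ue$, $\nry\otimes\nry\ue$ or $\nry\otimes\nry\otimes\nry\ue$, each now controlled in $L^2([0,T];\lty)$ uniformly in $\eps$; hence $\supe\|\partial_t\nrz\ve\|_{L^2([0,T];\lty)}<+\infty$.

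The main obstacle is not conceptual but combinatorial: expanding $b_j\cdny\big([b_i\cdny,\divy(D\nabla_y)]\ue\big)$ produces a long list of terms, and the work is to verify that each of them falls into one of the two admissible categories — ``linear in $\nry\otimes\nry\otimes\nry\ue$, hence absorbable after one integration by parts'' or ``bounded by $\|\nry\otimes\nry\ue\|_{\lty}^{2}+\|\nry\ue\|_{\lty}^{2}$ with an $\liy$ coefficient'' — and that the precise regularity hypotheses in the statement are exactly what is needed to close each case. The structural mechanism (disappearance of the $\tfrac1\eps$ term thanks to involution and antisymmetry, coercivity, Gronwall) is identical to that of Proposition~\ref{MoreEstimates}, which is why this argument is deferred to the appendix.
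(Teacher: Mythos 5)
Your proposal follows the same route as the paper's proof in Appendix~\ref{A}: apply $b_j\cdny$ to \eqref{E48}, exploit the involution $[b,b_j]=0$ so the singular term stays a pure transport and vanishes in the energy balance, use the commutator formula \eqref{ComFor} and identity \eqref{E54}, estimate the resulting terms (the paper avoids third derivatives in the right-hand side by integrating by parts, as you also invoke via the $J^1_i$ step, rather than relying on Young absorption), invoke coercivity \eqref{E46}, apply Gronwall, then treat $\partial_t\nrz\ve$ by differentiating \eqref{DerTimeVeps} along $b_i\cdnz$ and using the flow commutation coming from involution. The only gap in your sketch is that you do not state the identity \eqref{EquDivbi}, $\divy(RD)_i=\sum_j b_j\cdny(RD\,{}^tR)_{ij}+\sum_j(RD\,{}^tR)_{ij}\divy b_j$, which the paper uses to convert the needed $\nry$-derivatives of $\divy(RD)$ into the quantities $\nry(RD\,{}^tR)$, $\nry\otimes\nry(RD\,{}^tR)$, $\nry\divy b_j$, $\nry\otimes\nry\divy b_j$ actually assumed in the statement; without that observation the hypotheses as listed do not directly give the required bounds.
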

Similar computations allow us to estimate the solution of the limit model  \eqref{Equ14}. The arguments are a little bit tedious and we refer to Appendix \ref{A} for details.

\begin{pro}
\label{MoreEstimatesv}
Assume that all the hypotheses of Proposition \ref{MoreEstimates} hold true. Then we have for any $T \in \R_+$
\[
\nrz v\in L^\infty([0,T];\lty), \nrz \otimes \nrz v \in L^2([0,T];\lty),\;\;\partial _t v \in L^2([0,T];\lty).
\]
\end{pro}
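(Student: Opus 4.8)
The strategy is to reproduce, for the limit problem \eqref{Equ14}, the argument carried out for Proposition~\ref{MoreEstimates} on the perturbed problem \eqref{Equ1}. The situation is in fact somewhat simpler, since \eqref{Equ14} contains no singular transport term: when we differentiate along $b_i$ and test against $b_i\cdnz v$ there is nothing of order $1/\eps$ to cancel. The real work is to check that the averaged matrix field $\ave{D}$ inherits all the structural and smoothness hypotheses that were imposed on $D$ in Proposition~\ref{MoreEstimates}; once that is done, the energy estimates and the bound on $\partial_t v$ follow by transcription.

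\textbf{Step 1: transfer of the hypotheses to $\ave{D}$.} By Theorem~\ref{AveMatField}, $\ave{D}$ is again symmetric, lies in $H_Q\cap H_Q^\infty$ with $|\ave{D}|_{H_Q^\infty}\leq|D|_{H_Q^\infty}$, and still satisfies $Q^{1/2}(y)\ave{D}(y)Q^{1/2}(y)\geq\alpha I_m$; in particular the coercivity inequality \eqref{EquCoerH1R} holds with $\ave{D}$ in place of $D$. It remains to produce $\liy$ bounds for $\divy(R\ave{D})$, for $R[b_i,\ave{D}]\,^tR$ and for $\sum_{i=1}^m b_i\cdny(R[b_i,\ave{D}]\,^tR)$ out of the corresponding bounds on $D$. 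This rests on the fact that the $b_i$ are in involution with $b$: their flows commute with $Y$, so the directional differentiations $b_i\cdny$ and the bracket operations $A\mapsto[b_i,A]$ commute, up to lower order terms controlled by $\divy b_i$ and the structure constants $\alpha_{ij}^k$ of \eqref{E49}, with the group $(G(s))_{s\in\R}$, hence with the ergodic average $\ave{\cdot}=\proj{\ker L}$. Combining these commutation identities with the $H_Q^\infty$-contractivity of $\ave{\cdot}$ (fourth assertion of Theorem~\ref{AveMatField}) yields the desired bounds on $\ave{D}$, with constants no larger than those for $D$. I expect this invariance of the weighted $L^\infty$-type spaces under $\ave{\cdot}$ — the fact that pushing $b_i$-derivatives past $(G(s))_{s\in\R}$ only creates harmless lower order terms — to be the one genuinely delicate point; it is where the somewhat tedious bookkeeping mentioned in the text (and carried out in Appendix~\ref{A}) is concentrated.

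\textbf{Step 2: energy estimate and time derivative.} With $\ave{D}$ now satisfying the hypotheses of Proposition~\ref{MoreEstimates}, apply $b_i\cdnz$ to \eqref{Equ14}: exactly as in \eqref{E48}, but without the $1/\eps$ term,
\[
\partial_t(b_i\cdnz v)-\divz(\ave{D}\nabla_z(b_i\cdnz v))=\divz([b_i,\ave{D}]\nabla_z v)+\ave{D}\nabla_z v\cdot\nabla_z(\divy b_i),
\]
the right-hand side coming from the commutator formula \eqref{ComFor}. Testing against $b_i\cdnz v$, integrating over $\R^m$, summing over $i$ and using \eqref{EquCoerH1R} for $\ave{D}$ gives
\[
\frac{1}{2}\frac{\md}{\md t}\|\nrz v(t)\|_{\lty}^2+\alpha\|\nrz\otimes\nrz v(t)\|_{\lty}^2\leq\sum_{i=1}^m(|I_i^1|+|I_i^2|),
\]
with $I_i^1,I_i^2$ the same quantities as in Proposition~\ref{MoreEstimates} (now built from $v$ and $\ave{D}$): using the symmetry of $\ave{D}$, the identity $\nrz(b_i\cdnz v)=b_i\cdnz(\nrz v)-\mathcal{A}_i\nrz v$ from \eqref{E49}, and an integration by parts turning $b_i\cdnz(R[b_i,\ave{D}]\,^tR)$ and $\divy b_i$ into coefficients, one obtains $\sum_i(|I_i^1|+|I_i^2|)\leq C\|\nrz v(t)\|_{\lty}^2$, where $C$ depends only on the $\liy$ norms collected in Step~1. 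Gronwall's lemma then yields $\nrz v\in L^\infty([0,T];\lty)$ for every $T\in\R_+$ (recall $\uin\in H^1_R$), and integrating the differential inequality gives $\nrz\otimes\nrz v\in L^2([0,T];\lty)$. Finally \eqref{Equ14} reads $\partial_t v=\divz(\ave{D}\nabla_z v)$, and writing $\nabla_z v=\,^tR\,\nrz v$ as in Proposition~\ref{MoreEstimates},
\[
\divz(\ave{D}\nabla_z v)=\divy(R\ave{D})\cdot\nrz v+R\ave{D}\,^tR:\nrz\otimes\nrz v,
\]
so $\|\partial_t v(t)\|_{\lty}\leq\|\divy(R\ave{D})\|_{\liy}\|\nrz v(t)\|_{\lty}+|\ave{D}|_{H_Q^\infty}\|\nrz\otimes\nrz v(t)\|_{\lty}$, which lies in $L^2([0,T])$ by the previous bounds; hence $\partial_t v\in L^2([0,T];\lty)$. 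This completes the proof, the only nonroutine ingredient being Step~1.
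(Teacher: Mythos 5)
Your proposal follows the same two-step plan as the paper's proof: first transfer the structural and $\liy$-type hypotheses from $D$ to $\ave{D}$, then simply re-run the computations of Proposition~\ref{MoreEstimates} on \eqref{Equ14}, noting that the absence of the $\eps^{-1}$ transport term only makes things easier. Step~2 is a faithful transcription and is correct as written.

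One caveat on Step~1, which you correctly flag as the delicate point but describe slightly inaccurately. The commutation of the bracket operator $L_i:A\mapsto[b_i,A]$ with the group $(G(s))_{s\in\R}$ is \emph{exact}, not ``up to lower order terms controlled by $\divy b_i$ and the structure constants $\alpha_{ij}^k$'': the involution $[b_i,b]=0$ makes the flows $Y_i$ and $Y$ commute, hence $G_i(h)$ and $G(s)$ commute as groups, and $L_i$ — being the generator of $G_i$ — commutes with $G(s)$. The $\divy b_i$ and $\alpha_{ij}^k$ enter only in the subsequent Gronwall estimate (as in Proposition~\ref{MoreEstimates}), not in the transfer of hypotheses to $\ave{D}$. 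Also, the bound $|L_i(\ave{D})|_{H_Q^\infty}\leq|L_i(D)|_{H_Q^\infty}$ cannot literally be obtained from the fourth assertion of Theorem~\ref{AveMatField}, because the hypothesis only places $L_i(D)$ in $H_Q^\infty$ and not necessarily in $H_Q\cap H_Q^\infty$ where that assertion applies; the paper instead shows $L_i(\ave{D}) = \mathrm{w}\star\lim_{S\to\infty}\frac{1}{S}\int_0^S G(s)L_i(D)\,\md s$ in $H_Q^\infty$ and uses the weak-$\star$ lower semicontinuity of $|\cdot|_{H_Q^\infty}$ (and likewise weak-$\star\;\liy$ limits for $\divy(R\ave{D})$ and $\sum_i b_i\cdny(R[b_i,\ave{D}]\,{}^tR)$). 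These are refinements of the argument you sketch, not a different route.
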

\begin{pro}
\label{MoreEstimatesBisv}
Assume that all the hypotheses of Proposition \ref{MoreEstimatesBis} hold true. Then for any $T \in \R_+$, we have 
\[
\nrz \otimes \nrz v \in L^\infty([0,T];L^2),\;\;\nrz \otimes \nrz \otimes \nrz v \in L^2([0,T];L^2),\;\;\partial _t \nrz v \in L^2([0,T];L^2).
\]
\end{pro}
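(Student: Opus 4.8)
The plan is to transfer the regularity estimates for the perturbed solutions $\ue$ from Proposition \ref{MoreEstimatesBis} to the limit solution $v$ of \eqref{Equ14}, exactly as Proposition \ref{MoreEstimatesv} does with the estimates of Proposition \ref{MoreEstimates}. The key observation is that the limit model \eqref{Equ14} has the same structure as the perturbed model \eqref{Equ1}, with the singular term $\frac{1}{\eps} b \cdot \nabla_y$ simply absent and $D$ replaced by $\ave{D}$. Thanks to Theorem \ref{AveMatField}, the averaged matrix $\ave{D}$ inherits all the structural properties of $D$ used in the proof: it is symmetric, lies in $H_Q \cap H_Q^\infty$ with $|\ave{D}|_{H_Q^\infty} \leq |D|_{H_Q^\infty}$, and satisfies the same uniform coercivity bound $Q^{1/2}\ave{D}Q^{1/2} \geq \alpha I_m$. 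So the first step is to check that the hypotheses of Proposition \ref{MoreEstimatesBis} that bear on $D$ — namely that $R[b_i,D]\,^tR$, $\divy(R[b_i,D])$, $R[b_j,[b_i,D]]\,^tR$, $\divy(R[b_j,[b_i,D]])$, $\nry(RD\,^tR)$, $\nry \otimes \nry (RD\,^tR)$ all lie in $\liy$ — continue to hold with $D$ replaced by $\ave{D}$. Since $\ave{D} = \proj{\ker L}D$ and the relevant quantities are, up to the change of frame, ergodic averages along the flow of the very quantities assumed bounded for $D$ (the commutators $[b_i, \ave{D}]$, etc., commute nicely with $L$ because the $b_i$ are in involution with $b$), these bounds are preserved; this is the part that needs the most care and is the main obstacle.

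Once the coefficients of the limit problem are known to satisfy the same structural and boundedness hypotheses as those of the perturbed problem, the second step is to repeat the energy estimates of Proposition \ref{MoreEstimatesBis} verbatim for $v$. Concretely, one differentiates \eqref{Equ14} along $b_i$ and then along $b_j$, using the commutator formula \eqref{ComFor} (with $D$ replaced by $\ave{D}$) and the involution relations \eqref{E49} to rewrite $\nry \otimes \nry (b_i \cdot \nabla_z v)$ in terms of $b_i \cdot \nabla_z (\nry \otimes \nry v)$ plus lower-order terms involving the $\alpha_{ij}^k$. The absence of the singular transport term makes the argument strictly simpler than in the perturbed case: there is no $\frac{1}{\eps} b\cdot\nabla$ whose antisymmetry must be invoked to kill a large term. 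Multiplying by $b_j \cdot \nabla_z (b_i \cdot \nabla_z v)$, integrating over $\R^m$, summing over $i,j$, and using the coercivity \eqref{EquCoerH1R} for $\ave{D}$ yields a differential inequality of the form
\[
\frac{1}{2}\frac{\md}{\md t}\|\nrz \otimes \nrz v(t)\|_{\lty}^2 + \alpha \|\nrz \otimes \nrz \otimes \nrz v(t)\|_{\lty}^2 \leq C \left(\|\nrz \otimes \nrz v(t)\|_{\lty}^2 + \|\nrz v(t)\|_{\lty}^2\right),
\]
where the lower-order term $\|\nrz v(t)\|_{\lty}^2$ is already controlled on $[0,T]$ by Proposition \ref{MoreEstimatesv}. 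Gronwall's lemma then gives $\nrz \otimes \nrz v \in L^\infty([0,T];L^2)$ and, integrating the inequality, $\nrz \otimes \nrz \otimes \nrz v \in L^2([0,T];L^2)$.

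The third and final step handles the time derivative $\partial_t \nrz v$. From the differentiated equation $\partial_t (b_i \cdot \nabla_z v) = \divz(\ave{D}\nabla_z (b_i\cdot\nabla_z v)) + [b_i\cdot\nabla_z, \divz(\ave{D}\nabla_z)]v$, one reads off $\partial_t(b_i\cdot\nabla_z v)$ pointwise in terms of second-order spatial derivatives of $v$ (through $\divz(\ave{D}\nabla_z(\cdot))$, which expands as $\divy(R\ave{D})\cdot\nry(\cdot) + R\ave{D}\,^tR : \nry\otimes\nry(\cdot)$, exactly as in the computation at the end of the proof of Proposition \ref{MoreEstimates}) plus the commutator terms. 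Taking $L^2$ norms and using the already-established bounds on $\nrz\otimes\nrz v$, $\nrz\otimes\nrz\otimes\nrz v$ in $L^\infty([0,T];L^2)$ and $L^2([0,T];L^2)$ respectively, together with the $\liy$ bounds on the coefficients, gives $\partial_t \nrz v \in L^2([0,T];L^2)$, as claimed. As with Propositions \ref{MoreEstimatesv} and \ref{MoreEstimatesBis}, the detailed (and somewhat tedious) computations are deferred to Appendix \ref{A}; the essential point is that every step of the proof of Proposition \ref{MoreEstimatesBis} goes through for the limit problem because $\ave{D}$ satisfies the same hypotheses as $D$ and the dangerous $1/\eps$ term is simply not present.
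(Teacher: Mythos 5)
Your proposal is correct and follows essentially the same path as the paper's proof, which simply says to repeat the argument of Proposition~\ref{MoreEstimatesBis} for the limit problem after noting that $\ave{D}$ inherits the relevant hypotheses from $D$ (the latter having been established in the proof of Proposition~\ref{MoreEstimatesv} via the commutation of $G_i(h)$ with $G(s)$). Your added observation that the absence of the $1/\eps$ transport term makes the energy estimates strictly simpler, and your identification of the hypothesis transfer for $\ave{D}$ as the only real work, both match the structure of the paper's argument.
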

\begin{proof}
Apply exactly the same arguments as in the proof of Proposition \ref{MoreEstimatesBis}, after observing that the matrix field $\ave{D}$ satisfies the same hypotheses as the matrix field $D$ (see the proof of Proposition \ref{MoreEstimatesv}).
\end{proof}

\section{Two-scale analysis}
\label{2scale}
We intend to investigate the asymptotic behavior of \eqref{Equ1}, or equivalently \eqref{Equ8}. For any smooth, compactly supported function $\psi (t,z)$ we have to pass to the limit, when $\eps \searrow 0$, in the formulation
\[
- \intz{\uin (z) \psi (0,z)} - \inttz{\ve (t,z) \partial _t \psi }+ \inttz{G(t/\eps)D\nabla _z \ve \cdot \nabla _z \psi } = 0.
\]
Clearly, the main difficulty comes from the last integral, which presents two time scales ~: a slow time variable $t$ and also a fast time variable $s = t/\eps$ (not necessarily periodic). We detail here a general two-scale convergence result, based on ergodic means. Let us introduce some notations. We denote by $\avepq{\cdot}{\cdot}:H_P \times H_Q \to \R$ the bilinear continuous application defined for any $(A, B) \in H_P \times H_Q$ by 
\[
\avepq{A}{B}  = \inty{A(y):B(y)} = \inty{P^{1/2}AP^{1/2}:Q^{1/2}BQ^{1/2}}\leq |A|_P|B|_Q.
\]
It is easily seen that $A\in H_P \to \avepq{A}{\cdot}\in H_Q^\prime$ is a linear isomorphism, $\|\avepq{A}{\cdot}\|_{H_Q^\prime} = |A|_P$. Therefore we identify $H_Q^\prime$ to $H_P$ through the duality $\avepq{\cdot}{\cdot}$. Notice also that $A\in H_P \to PAP \in H_Q$, $B \in H_Q \to QBQ \in H_P$ are linear isomorphisms, $|PAP|_Q = |A|_P,|QBQ|_P = |B|_Q$. 
\begin{pro}
\label{SlowFastMat}
Let $T$ be a positive real number. Consider $C \in L^\infty(\R;H_Q)$, such that the family of means $\left ( \frac{1}{S}\int _{s_0} ^{s_0 +S } C(s)\;\md s \right ) _{S>0}$ converges strongly in $H_Q$ toward some $\overline{C} \in H_Q$, uniformly with respect to $s_0 \in \R$, when $S \to +\infty$ and $\calB _\omega \subset C([0,T];H_P)$ a bounded set in $L^1([0,T];H_P)$, of functions which admit as modulus of continuity in $C([0,T];H_P)$ the same function $\omega : [0,T]\to \R_+$ {\it i.e.,}
\[
|B(t) - B(t^\prime)|_P \leq \omega ( |t - t^\prime|),\;\;t, t^\prime \in [0,T],\;\;B \in \calB _\omega
\]
with $\omega$ non decreasing and $\lim _{\lambda \searrow 0} \omega (\lambda)= 0$. Then 
\[
\lime \int _0 ^T \avepq{B(t)}{C(t/\eps)}\;\md t = \int _0 ^T \avepq{B(t)}{\overline{C}}\;\md t
\]
uniformly with respect to $B \in \calB _\omega$. 
\end{pro}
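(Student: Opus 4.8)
The plan is to reduce the statement to the scalar ergodic averaging result by exploiting the duality pairing $\avepq{\cdot}{\cdot}$ and the uniform (in $s_0$) convergence of the Cesàro means of $C$. First I would fix a partition $0 = t_0 < t_1 < \dots < t_N = T$ of $[0,T]$ with uniform mesh $h = T/N$, and on each subinterval $[t_{n-1}, t_n]$ replace $B(t)$ by the constant value $B(t_{n-1})$. Using the common modulus of continuity $\omega$, the error incurred in $\int_0^T \avepq{B(t)}{C(t/\eps)}\,\md t$ is bounded by $\omega(h)\int_0^T |C(t/\eps)|_Q\,\md t \le T\,\omega(h)\,\|C\|_{L^\infty(\R;H_Q)}$, uniformly in $\eps$ and in $B \in \calB_\omega$; similarly for the limit integral with $\overline{C}$ in place of $C(t/\eps)$. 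This reduces the problem, up to an error $O(\omega(h))$ that is uniform over $\calB_\omega$, to showing that for each fixed $n$,
\[
\lime \int_{t_{n-1}}^{t_n} \avepq{B(t_{n-1})}{C(t/\eps)}\,\md t = \int_{t_{n-1}}^{t_n} \avepq{B(t_{n-1})}{\overline{C}}\,\md t = h\,\avepq{B(t_{n-1})}{\overline{C}}.
\]

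For the left-hand side, the change of variable $s = t/\eps$ gives $\int_{t_{n-1}}^{t_n} \avepq{B(t_{n-1})}{C(t/\eps)}\,\md t = \eps \int_{t_{n-1}/\eps}^{t_n/\eps} \avepq{B(t_{n-1})}{C(s)}\,\md s$. Since $A \mapsto \avepq{A}{\cdot}$ identifies $H_P$ with $H_Q'$, this equals $\big\langle B(t_{n-1}),\, \eps\int_{t_{n-1}/\eps}^{t_n/\eps} C(s)\,\md s\big\rangle_{P,Q}$, and $\eps\int_{t_{n-1}/\eps}^{t_n/\eps} C(s)\,\md s = h \cdot \frac{1}{S}\int_{s_0}^{s_0+S} C(s)\,\md s$ with $s_0 = t_{n-1}/\eps$ and $S = h/\eps$. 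As $\eps \searrow 0$ we have $S \to +\infty$, so by hypothesis this Cesàro mean converges strongly in $H_Q$ to $\overline{C}$, uniformly in $s_0$ — hence uniformly in $\eps$ — and the pairing, being continuous, converges to $h\,\avepq{B(t_{n-1})}{\overline{C}}$. Summing over $n = 1,\dots,N$, combining with the partition error, and using the isomorphism bound $|\avepq{A}{B}| \le |A|_P |B|_Q$ together with the uniform $L^1([0,T];H_P)$ bound on $\calB_\omega$ to control the size of the constant, gives
\[
\limsup_{\eps \searrow 0}\left| \int_0^T \avepq{B(t)}{C(t/\eps)}\,\md t - \int_0^T \avepq{B(t)}{\overline{C}}\,\md t \right| \le 2T\,\omega(h)\,\|C\|_{L^\infty(\R;H_Q)},
\]
uniformly in $B \in \calB_\omega$, and letting $h \searrow 0$ (i.e. $N \to \infty$) concludes.

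The main obstacle is bookkeeping the three levels of uniformity simultaneously: uniformity in $s_0$ of the ergodic convergence (needed because $s_0 = t_{n-1}/\eps$ drifts to infinity), uniformity in $B \in \calB_\omega$ (handled by the shared modulus $\omega$ for the partition step and by the shared $L^1$ bound for the magnitude of $B(t_{n-1})$), and the interplay that the partition must be chosen before sending $\eps \searrow 0$ while being fine enough at the end. A minor technical point is that $\calB_\omega$ consists of continuous $H_P$-valued functions, so $B(t_{n-1})$ is well defined and the Riemann-type approximation is legitimate; the integrand $t \mapsto \avepq{B(t)}{C(t/\eps)}$ is measurable and bounded by $\|C\|_{L^\infty(\R;H_Q)}|B(t)|_P \in L^1([0,T])$, so all integrals make sense. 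No compactness or weak-convergence machinery is required — only the strong uniform ergodic hypothesis on $C$ and the equicontinuity built into $\calB_\omega$.
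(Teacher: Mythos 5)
Your proof is correct and takes essentially the same approach as the paper: both arguments replace $B$ by a piecewise constant function on a finite partition of $[0,T]$, use the modulus of continuity $\omega$ to control the approximation error, and invoke the uniform-in-$s_0$ ergodic convergence of the Ces\`aro means of $C$ to pass to the limit on each subinterval, with the $L^1([0,T];H_P)$ bound on $\calB_\omega$ (combined with $\omega$) giving uniformity over $B$. The one cosmetic difference is that you fix the mesh $h$ independently of $\eps$ and then send $\eps\searrow 0$ before $h\searrow 0$, whereas the paper tunes the subinterval length $T_{\delta,\eps}=\eps S_\delta$ to $\eps$ so that the partition error vanishes automatically as $\eps\searrow 0$ for fixed $\delta$; the underlying decomposition, estimates, and order-of-quantifier bookkeeping are otherwise identical.
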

\begin{proof}
For any $\delta >0$, there is $S_\delta >0$ such that 
\[
\left | \frac{1}{S} \int _{s_0} ^{s_0 + S}C(s) \;\md s - \overline{C}\right |_Q < \delta,\;\;\mbox{for any } S\geq S_\delta \mbox{ and } s_0 \in \R.
\]
Performing the change of variable $ s = \frac{t}{\eps}$ in the above integral, leads to
\begin{equation}
\label{E82}
\left | \frac{1}{T}\int_{t_0} ^{t_0 + T} C(t/\eps) \;\md t - \overline{C} \right |_Q < \delta,\;\;\mbox{for any } T\geq \eps S_\delta = T_{\delta, \eps}\;\mbox{ and } t_0 \in \R.
\end{equation}
We split the interval $[0,T[$ in a finite number of intervals of size great or equal to $\Tde$. For example let $\kde$ be $\left [ \frac{T}{\Tde} \right ]$. If $T/\Tde$ is an integer, that is $T/\Tde = \kde$, we consider the intervals
\[
[k\Tde, (k+1) \Tde [,\;\;0 \leq k \leq \kde - 1
\]
and if $T/\Tde$ is not an integer, we take the intervals
\[
[k\Tde, (k+1) \Tde [,\;\;0 \leq k \leq  \kde- 2,\;\;\mbox{ and } \;[(\kde - 1) \Tde, T[.
\]
Notice that in both cases we have $\kde$ intervals, whose sizes are between $\Tde$ and $2\Tde$. We denote by $(t_{k, \delta, \eps})_{0\leq k \leq \kde}$, or simply $(t_k)_{0 \leq k \leq \kde}$, the end points of these intervals. The last point is allways $t_{\kde} = T$. Therefore we can write for any $B \in \calB _\omega$
\begin{align}
\label{E84}
\left | 
\inttT{\avepq{B(t)}{C(t/\eps)}}  \right. & -\left . \inttT{\avepq{B(t)}{\overline{C}}} \right |  = 
\left | \inttT{\avepq{B(t)}{C(t/\eps) - \overline{C}}}\right | \nonumber\\
& \leq \sum _{k = 0} ^{k_{\delta, \eps}-1} \left |\int_{t_k} ^{t_{k+1}} \avepq{B(t)}{C(t/\eps) - \overline{C}}\md t  \right |\nonumber \\
& \leq \sum _{k = 0} ^{k_{\delta, \eps} - 1}\left |\int_{t_k} ^{t_{k+1}} \avepq{B(t)- B(t_k)}{C(t/\eps) - \overline{C}}\md t  \right |\nonumber \\
& + \sum _{k = 0} ^{k_{\delta, \eps}-1} \left | \int _{t_k} ^{t_{k+1}} \avepq{B(t_k)}{C(t/\eps) - \overline{C}}\;\md t\right|\nonumber \\
& =: \Sigma _1 + \Sigma _2.
\end{align}
Since the function $t \in [0,T]\to B(t) \in H_P$ admits $\omega$ as modulus of continuity, we obtain the following estimate for $\Sigma _1$
\begin{align}
\label{E81}
\Sigma _1 & \leq \sum _{k = 0} ^{k_{\delta, \eps}-1} \int _{t_k } ^{t_{k+1} }\omega ( |t - t_k |)\;|C(t/\eps) - \overline{C}|_Q \;\md t \\
& \leq \sum _{k = 0} ^ {k_{\delta, \eps}-1}\omega (2T_{\delta, \eps})(t_{k+1} -t_k) \;2 \|C\|_{L^\infty(\R; H_Q)} \nonumber \\
& = 2 \|C\|_{L^\infty(\R; H_Q)} \omega (2T_{\delta, \eps})T.\nonumber
\end{align}
The estimate for $\Sigma_2$ comes by using \eqref{E82}
\begin{align}
\label{E83}
\Sigma_2 & = \sum _{k = 0} ^{k_{\delta, \eps}-1}\left |\int _{t_k } ^{t_{k+1} }(PB(t_k)P,C(t/\eps) - \overline{C})_Q\;\md t  \right |\\
& = \sum _{k = 0} ^{\kde -1 } \left |
\left ( PB (t_k)P,\int _{t_k} ^ {t_{k+1}} (C(t/\eps) - \overline{C})\;\md t\right )_Q  \right| \nonumber \\
& = \sum _{k = 0} ^{\kde - 1} \left | \avepq{B (t_k )}{\int _{t_k } ^ {t_{k+1}}
(C(t/\eps) - \overline{C})\;\md t} \right |\nonumber \\
& \leq \sum _{k = 0} ^{\kde - 1}\delta (t_{k+1} - t_k)  |B(t_k)|_P \nonumber \\
& \leq \delta \left [\|B\|_{L^1([0,T];H_P)} + \omega (2\Tde) T \right ].\nonumber 
\end{align}
Thanks to \eqref{E84}, \eqref{E81}, \eqref{E83} we deduce
\begin{align*}
\left |\inttT{\avepq{B(t)}{C(t/\eps)}}  - \inttT{\avepq{B(t)}{\overline{C}}}\right| & \leq 2 \|C\|_{L^\infty(\R; H_Q)} \omega (2\Tde) T \\
& + \delta \left [\|B\|_{L^1([0,T];H_P)} + \omega (2\Tde) T    \right ].
\end{align*}
Let $\eta$ be a positive real number and $\delta >0$ small enough such that $\delta \|B\|_{L^1([0,T];H_P)}< \eta /2$ uniformly with respect to $B\in \calB_\omega$ (which is possible since $\calB_\omega$ is bounded in $L^1([0,T];H_P)$). Observing that $\lime \Tde = \lime \eps S_\delta = 0$, and $\lime \omega (2\Tde) = 0$, we deduce that there is $\eps = \eps (\eta)$ such that for any $0 < \eps < \eps (\eta)$
\[
2 \|C\|_{L^\infty(\R; H_Q)} \omega (2\Tde) T  + \delta  \omega (2\Tde) T   < \frac{\eta}{2}.
\]
Finally we obtain
\begin{align*}
\left | \inttT{\avepq{B(t)}{C(t/\eps)}}    - \inttT{\avepq{B(t)}{\overline{C}}}\right| & \leq \delta \|B\|_{L^1([0,T]; H_P)} + \frac{\eta}{2} < \eta
\end{align*}
for any $0 < \eps < \eps (\eta)$, uniformly with respect to $B \in \calB_\omega$. 
\end{proof}
\begin{remark}
\label{SlowFastMatBis}
The conclusion of Proposition \ref{SlowFastMat} holds true for any pair $(B,C) \in L^1([0,T];H_P)\times L^\infty(\R;H_Q)$ such that $\left ( \frac{1}{S}\int _{s_0} ^{s_0 + S}C(s) \;\md s\right )_{S>0}$ converges strongly in $H_Q$ toward some $\overline{C} \in H_Q$, uniformly with respect to $s_0 \in \R$, when $S \to +\infty$. Indeed, observe that 
\[
\left | \int _0 ^T \avepq{B(t)}{C(t/\eps)}\;\md t -  \int _0 ^T \avepq{B(t)}{\overline{C}}\;\md t \right | \leq 2 \|B\|_{L^1([0,T];H_P)} \|C\|_{L^\infty(\R;H_Q)}
\]
and thus, by using the density of $C([0,T];H_P)$ in $L^1([0,T];H_P)$, it is enough to consider $B \in C([0,T];H_P)$. But in this case, the uniform continuity of $B$ allows us to pick a modulus of continuity $\omega : [0,T]\to \R_+$
\[
\omega (\lambda ) = \sup _{t, t^\prime \in [0,T],|t- t^\prime| \leq \lambda} |B(t) - B(t^\prime)|_P,\;\;\lambda \in [0,T]
\]
and all the arguments in the proof of Proposition \ref{SlowFastMat} apply.
\end{remark}
In the sequel, we present some consequences of Proposition \ref{SlowFastMat} which will be used when justifying the main result in Theorem \ref{MainRes1}.
\begin{pro}
\label{WeakLim}
Let $T$ be a positive real number. Consider $D \in H_Q \cap H_Q^\infty$ a symmetric matrix field and $\mathcal{W}_\omega \subset C([0,T];X_P)$ a bounded set in $L^2([0,T];X_P)$ of functions which admit as modulus of continuity in $C([0,T];X_P)$ the same function $\omega : [0,T]\to \R_+$, {\it i.e.,}
\[
|w(t) - w(t^\prime)|_P \leq \omega (|t - t ^\prime|),\;\;t, t^\prime \in [0,T],\;\;w \in \mathcal{W}_\omega
\]
with $\omega$ non decreasing and $\lim _{\lambda \searrow 0} \omega (\lambda) = 0$. Then for any family $(w ^\beta)_{\beta >0} \subset \mathcal{W}_\omega$ which converges weakly in $L^2([0,T];X_P)$ toward $w^0$ when $\beta \searrow 0$, we have
\begin{equation}
\label{E86}
\lim _{(\beta, \eps) \to (0,0)} \int _0 ^T \avepq{\theta (t) \otimes w^\beta (t)}{G(t/\eps)D}\;\md t = \int _0 ^T \avepq{\theta (t) \otimes w^0 (t)}{\ave{D}}\;\md t
\end{equation}
for any $\theta \in L^2([0,T];X_P)$. 
\end{pro}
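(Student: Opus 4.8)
The plan is to split the difference in \eqref{E86} into a ``fast oscillation'' part, to be handled by Proposition \ref{SlowFastMat}, and a ``weak convergence'' part, to be handled by the weak convergence of $(w^\beta)_{\beta>0}$. Throughout I would use the symmetry of $D$ (hence of $G(s)D$ and of $\ave{D}$, by Proposition \ref{CZGroup} and Theorem \ref{AveMatField}) to write, for vector fields $c,d\in X_P$ and a symmetric $M\in H_Q\cap H_Q^\infty$,
\[
\avepq{c\otimes d}{M} = \inty{M(y)\,c(y)\cdot d(y)}, \qquad \bigl|\avepq{c\otimes d}{M}\bigr| \leq |M|_{H_Q^\infty}\,|c|_P\,|d|_P,
\]
the bound coming from $|M(y)c\cdot d| = |Q^{1/2}MQ^{1/2}(y)\,P^{1/2}c\cdot P^{1/2}d| \leq |M|_{H_Q^\infty}|P^{1/2}c|\,|P^{1/2}d|$ and Cauchy--Schwarz. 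I would also use $|G(s)D|_{H_Q^\infty} = |D|_{H_Q^\infty}$ and $|\ave{D}|_{H_Q^\infty}\leq |D|_{H_Q^\infty}$ (Theorem \ref{AveMatField}), and note that since $\mathcal{W}_\omega$ is bounded in $L^2([0,T];X_P)$ and equicontinuous it is bounded in $C([0,T];X_P)$, with $C_\omega := \sup_{w\in\mathcal{W}_\omega}\|w\|_{L^2([0,T];X_P)}<+\infty$. Then I would write the quantity in \eqref{E86} minus its claimed limit as
\[
\int_0^T \avepq{\theta(t)\otimes w^\beta(t)}{G(t/\eps)D-\ave{D}}\,\md t \;+\; \int_0^T \avepq{\theta(t)\otimes(w^\beta(t)-w^0(t))}{\ave{D}}\,\md t \;=:\; A_{\beta,\eps} + B_\beta.
\]

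The term $B_\beta$ is the easy one: $u\mapsto \int_0^T \avepq{\theta(t)\otimes u(t)}{\ave{D}}\,\md t$ is a bounded linear functional on $L^2([0,T];X_P)$ by the displayed bound, so $B_\beta\to 0$ as $\beta\searrow 0$ follows directly from $w^\beta\rightharpoonup w^0$ weakly in $L^2([0,T];X_P)$. For $A_{\beta,\eps}$ I would first reduce to the case where $\theta$ is a step function in $t$ with values in $X_P\cap X_P^\infty$. Such functions are dense in $L^2([0,T];X_P)$ (simple $X_P$-valued functions are dense, and $X_P\cap X_P^\infty$ is dense in $X_P$, e.g.\ via $c\mapsto c\,\mathbf{1}_{\{|P^{1/2}c|\leq n\}}\mathbf{1}_{B_n}$), and if $\theta_n\to\theta$ in $L^2([0,T];X_P)$ then, by the displayed bound and $\|w^\beta\|_{L^2([0,T];X_P)},\|w^0\|_{L^2([0,T];X_P)}\leq C_\omega$ (the latter by weak lower semicontinuity of the norm), both $A_{\beta,\eps}$ and $B_\beta$ change by at most $2|D|_{H_Q^\infty}C_\omega\|\theta-\theta_n\|_{L^2([0,T];X_P)}$ under $\theta\mapsto\theta_n$, uniformly in $\beta$ and $\eps$. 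A standard density argument then reduces the proof to $\theta=\sum_k\theta_k\,\mathbf{1}_{I_k}$ with $\theta_k\in X_P\cap X_P^\infty$ and $(I_k)_k$ a finite partition of $[0,T]$.

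For such a $\theta$ one has $A_{\beta,\eps}=\sum_k\int_{I_k}\avepq{\theta_k\otimes w^\beta(t)}{G(t/\eps)D-\ave{D}}\,\md t$, and I would treat each slab separately. On $I_k$ the family $\{\,t\mapsto\theta_k\otimes w(t):w\in\mathcal{W}_\omega\,\}$ lies in $C(I_k;H_P)$ — here it is essential that $\theta_k\in X_P^\infty$, which forces $\theta_k\otimes w(t)\in H_P$ with $|\theta_k\otimes w(t)|_P\leq |\theta_k|_{X_P^\infty}|w(t)|_P$ — admits the common modulus of continuity $\lambda\mapsto |\theta_k|_{X_P^\infty}\omega(\lambda)$, and is bounded in $L^1(I_k;H_P)$. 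Applying Proposition \ref{SlowFastMat} on $I_k$ with $C(s)=G(s)D$, whose ergodic means converge in $H_Q$ to $\overline{C}=\ave{D}$ uniformly in the starting point by Theorem \ref{AveMatField}, yields $\int_{I_k}\avepq{\theta_k\otimes w(t)}{G(t/\eps)D-\ave{D}}\,\md t\to 0$ as $\eps\searrow 0$ uniformly over $w\in\mathcal{W}_\omega$, hence uniformly in $\beta$. Summing over the finitely many $k$ gives $A_{\beta,\eps}\to 0$ as $\eps\searrow 0$, uniformly in $\beta$.

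To conclude, given $\eta>0$ I would pick $\eps_0$ with $|A_{\beta,\eps}|<\eta/2$ for all $\eps<\eps_0$ and all $\beta$, and $\beta_0$ with $|B_\beta|<\eta/2$ for $\beta<\beta_0$; then $|A_{\beta,\eps}|+|B_\beta|<\eta$ whenever $\beta<\beta_0$ and $\eps<\eps_0$, which is precisely \eqref{E86}. I expect the main obstacle to be the term $A_{\beta,\eps}$, where a fast-oscillating matrix field is paired against vector fields that themselves vary with $\beta$: the argument works only because Proposition \ref{SlowFastMat} delivers its convergence \emph{uniformly} over the equicontinuous, $L^1$-bounded family $\mathcal{W}_\omega$, and because the preliminary reduction guarantees that $\theta_k\otimes w(t)$ genuinely belongs to $H_P$, so that the $|\cdot|_P\,|\cdot|_Q$ duality estimate underlying Proposition \ref{SlowFastMat} is available.
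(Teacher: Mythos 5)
Your proof is correct and follows the same skeleton as the paper's: split the difference into an oscillating part $A_{\beta,\eps}$ and a weak-convergence part $B_\beta$, kill $B_\beta$ by the continuity of $u\mapsto\int_0^T\avepq{\theta\otimes u}{\ave{D}}\,\md t$, and kill $A_{\beta,\eps}$ uniformly in $\beta$ by reducing to a dense class of $\theta$ and invoking Proposition \ref{SlowFastMat} with $\calB_\omega=\{\theta\otimes w: w\in\mathcal{W}_\omega\}$. The only genuine difference is the choice of density class: the paper takes $\theta$ with $P^{1/2}\theta\in C^0_c([0,T]\times\R^m)$, which makes $t\mapsto\theta(t)\otimes w(t)$ equicontinuous on all of $[0,T]$ (with modulus $\omega_\theta(\cdot)\|w\|_{C([0,T];X_P)}+\omega(\cdot)\|P^{1/2}\theta\|_{C^0}$) so a single application of Proposition \ref{SlowFastMat} suffices; you instead take $\theta$ a step function in $t$ with values in $X_P\cap X_P^\infty$, which loses continuity at the endpoints of the $I_k$ and forces a slab-wise application of Proposition \ref{SlowFastMat}. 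Your route is perfectly sound, but it tacitly uses that Proposition \ref{SlowFastMat} holds on any subinterval $[a_k,b_k]\subset[0,T]$ and not only on $[0,T]$ with left endpoint $0$ (equivalently, after a change of variable, that the rescaled matrix field $C((t+a_k)/\eps)$ is admissible); this is true because hypothesis \eqref{E82} in that proposition's proof is uniform in the starting point $t_0$, but it is worth saying explicitly. The paper's continuous-in-time class avoids that adaptation at the small cost of handling a slightly more involved modulus-of-continuity estimate; your step-function class trades that for the slab bookkeeping. Both density classes are valid and roughly equally convenient.
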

\begin{proof}
Notice that for any $\theta, w \in L^2([0,T];X_P)$ we have 
\begin{align*}
\int _0 ^T & \avepq{\theta (t)  \otimes w (t)}{G(t/\eps)D}\;\md t  = \inttTy{\theta(t,y) \otimes w(t,y) : G(t/\eps)D} \\
& = \inttTy{( P^{1/2}(y)\theta(t,y) )\otimes (P^{1/2}(y)w(t,y)) : Q^{1/2}(y)G(t/\eps)D Q^{1/2}(y)} \\
& \leq \int _0 ^T |G(t/\eps)D|_{H_Q^\infty}\inty{|P^{1/2}(y)\theta(t,y)|\,|P^{1/2}(y)w(t,y)|}\md t \\
& \leq |D|_{H_Q^\infty}\left (\inttTy{P(y) \theta (t,y) \cdot \theta(t,y)} \right ) ^{1/2}\left (\inttTy{P(y) w (t,y) \cdot w(t,y)} \right ) ^{1/2}\\
& = |D|_{H_Q^\infty} \|\theta \|_{L^2([0,T];X_P)} \|w \|_{L^2([0,T];X_P)} 
\end{align*}
and similarly, by using $|\ave{D}|_{H_Q^\infty} \leq |D|_{H_Q^\infty}$
\begin{equation}
\label{E87}
\int _0 ^T  \avepq{\theta (t)  \otimes w (t)}{\ave{D}}\;\md t \leq |D|_{H_Q^\infty} \|\theta \|_{L^2([0,T];X_P)} \|w \|_{L^2([0,T];X_P)}.
\end{equation}
As the family $(w^\beta)_{\beta >0}$ is bounded in $L^2([0,T];X_P)$, it is enough to check \eqref{E86} for any $\theta$ in a dense subset of $L^2([0,T];X_P)$, for example for any $\theta$ such that $P^{1/2} \theta \in C^0 _c ([0,T]\times \R^m)$. We appeal to Proposition \ref{SlowFastMat} with $C(s) = G(s)D, \overline{C} = \ave{D}$ and $\calB = \{\theta \otimes w\;:\; w \in \mathcal{W}_\omega\}$. By Proposition \ref{CZGroup} we know that $(G(s))_{s\in \R}$ is a $C^0$-group of unitary operators on $H_Q$, implying that $C \in L^\infty(\R;H_Q)$. By Theorem \ref{AveMatField} we deduce that
\[
\limS \frac{1}{S} \int _{s_0} ^{s_0 + S} C(s)\;\md s = \overline{C},\;\mbox{uniformly with respect to } s_0 \in \R.
\]
For any $w \in \mathcal{W}_\omega$ we write
\begin{align*}
\|\theta \otimes w\|_{L^1([0,T];H_P)} & = \int_0 ^T \left (\inty{(P^{1/2} \theta) \otimes (P^{1/2} w): (P^{1/2} \theta) \otimes (P^{1/2} w)} \right ) ^{1/2} \;\md t\\
& \leq \int _0 ^T |\theta (t) |_{X_P ^\infty} |w(t)|_P\;\md t\\
& \leq \|P^{1/2} \theta\|_{L^2([0,T];\liy)}\|w \|_{L^2([0,T];X_P)}
\end{align*}
and therefore the boundedness of $\mathcal{W}_\omega$ in $L^2([0,T];X_P)$ implies the boundedness of $\calB$ in $L^1([0,T];H_P)$ (here use $P^{1/2} \theta \in C^0 _c([0,T]\times \R^m)$). We search now for a continuity modulus of $\calB$. For any $w \in \mathcal{W}_\omega$, $t, t^\prime \in [0,T]$, we have
\begin{align*}
|\theta (t) \otimes w(t) & - \theta (t^\prime) \otimes w(t^\prime)|_P  \leq |\theta (t) - \theta (t^\prime) |_{X_P^\infty} |w(t)|_P + |\theta(t^\prime)|_{X_P^\infty} |w(t) - w(t^\prime)|_P \\
& \leq \|P^{1/2} \theta (t) - P^{1/2} \theta (t^\prime) \|_{\liy} |w(t)|_P + \|P^{1/2}\theta(t^\prime)\|_{\liy} \omega( |t - t^\prime|) \\
& \leq \omega_\theta ( |t - t^\prime|)\|w\|_{C([0,T];X_P)} + \omega( |t - t^\prime|)\|P^{1/2}\theta\|_{C^0([0,T]\times \R^m)}
\end{align*}
where $\omega_\theta$ is a continuity modulus for $P^{1/2}\theta \in C^0_c([0,T]\times \R^m)$. We are done if we show that $\mathcal{W}_\omega$ is also bounded in $C([0,T];X_P)$. This comes easily by noticing that for any $t, t^\prime \in [0,T], w \in \mathcal{W}_\omega$ we have
\[
|w(t)|^2_P \leq ( |w(t^\prime)| + \omega (|t-t^\prime|))^2 \leq 2 |w(t^\prime)|^2 _P + 2 \omega ^2 (T).
\]
Integrating with respect to $t^\prime \in [0,T]$ one gets for any $t \in [0,T]$
\[
|w(t)|^2_P \leq \frac{2}{T} \|w\|^2 _{L^2([0,T];X_P)} + 2 \omega ^2(T) 
\]
saying that $\mathcal{W}_\omega$ is bounded in $C([0,T];X_P)$. By Proposition \ref{SlowFastMat}, for any $\eta >0$, there is $\eps (\eta)>0$ such that for any $0 < \eps < \eps (\eta), \beta >0$
\begin{equation*}
\label{E88}
\left | \int _0 ^T \avepq{\theta(t) \otimes w^\beta (t)}{G(t/\eps)D}\;\md t - \int _0 ^T \avepq{\theta(t) \otimes w^\beta (t)}{\ave{D}}\;\md t \right | < \frac{\eta}{2}
\end{equation*}
By \eqref{E87} we know that $w \to \int_0 ^T \avepq{\theta(t)\otimes w(t)}{\ave{D}}\;\md t$ is a linear continuous application on $L^2([0,T];X_P)$, and since $(w^\beta)_{\beta >0}$ converges weakly in $L^2([0,T];X_P)$, toward $w^0$, when $\beta \searrow 0$, there is $\beta (\eta) >0$ such that for any $0 < \beta < \beta (\eta)$
\begin{equation*}
\label{E89}
\left | \int_0 ^T \avepq{\theta(t)\otimes w^\beta (t)}{\ave{D}}\;\md t - \int _0 ^T \avepq{\theta(t) \otimes w^0}{\ave{D}}\;\md t \right | < \frac{\eta}{2}
\end{equation*}
Therefore, for any $\eta >0$, there is $\beta (\eta) >0, \eps (\eta) >0$ such that for any $0 < \beta < \beta (\eta),0< \eps < \eps (\eta)$
\[
\left | \int_0 ^T \avepq{\theta(t)\otimes w^\beta (t)}{G(t/\eps)D}\;\md t - \int _0 ^T \avepq{\theta(t) \otimes w^0}{\ave{D}}\;\md t \right | < \eta.
\]
\end{proof}
\begin{remark}
\label{Aux} The previous arguments show that if $D \in H_Q \cap H_Q^\infty$, then
\begin{equation}
\label{E94}
\lime \int _0 ^T \avepq{w(t) \otimes w(t)}{G(t/\eps)D}\;\md t = \int _0 ^T \avepq{w(t) \otimes w(t)}{\ave{D}}\;\md t
\end{equation}
for any $w \in L^2([0,T];X_P)$. Indeed, taking into account that the bilinear application 
\[
(\theta, w) \in L^2([0,T];X_P) \times L^2([0,T];X_P) \to \int _0 ^T \avepq{\theta(t) \otimes w(t)}{\ave{D}}\;\md t \in \R
\]
is continuous, it is enough to establish \eqref{E94} for $w$ in the set $\{\theta \in L^2([0,T];X_P)\;:\;P^{1/2} \theta \in C^0_c([0,T]\times \R^m)\}$, which is dense in $L^2([0,T];X_P)$. And this is a direct consequence of Remark \ref{SlowFastMatBis}, since for any $\theta \in L^2([0,T];X_P)$ such that $P^{1/2}\theta \in C^0_c([0,T]\times \R^m)$, we have
\begin{align*}
\|\theta \otimes \theta \|_{L^1([0,T];H_P)} & = \int _0 ^T \!\!\!\left (\inty{|P^{1/2}\theta|^4} \right )^{1/2}\!\!\!\md t \leq \!\!\int _0 ^T \!\!\!\|P^{1/2}\theta (t)\|_{C^0(\R^m)} \|P^{1/2}\theta (t)\|_{\lty}\;\md t \\
& \leq \|\theta\|_{L^2([0,T];X_P^\infty)} \|\theta\|_{L^2([0,T];X_P)} < +\infty.
\end{align*}
\end{remark}
When the matrix field $D$ is definite positive, the behavior of the upper limit with respect to $(\beta, \eps)$ for the quadratic term $\int_0 ^T \avepq{\theta(t)\otimes w^\beta (t)}{G(t/\eps)D}\;\md t$ characterizes the strong convergence of the family $(w^\beta)_{\beta >0}$ as shown in the following result.
\begin{pro}
\label{StrongLim}
Assume that all the hypotheses in Proposition \ref{WeakLim} hold true.
\begin{enumerate}
\item
If the matrix field $D$ is positive, then we have
\[
\int _0 ^T \avepq{w^0(t) \otimes w^0(t)}{\ave{D}}\;\md t \leq \liminf _{(\beta, \eps) \to (0,0)} \int _0 ^T \avepq{w^\beta(t) \otimes w^\beta(t)}{G(t/\eps)D}\;\md t. 
\]
\item
If $(w^\beta)_{\beta>0}$ converges strongly in $L^2([0,T];X_P)$ toward $w^0$ when $\beta \searrow 0$ (the existence of a modulus of continuity $\omega$ in $C([0,T];X_P)$ for the family $(w^\beta)_{\beta >0}$ is not necessary here), then we have
\[
\int _0 ^T \avepq{w^0(t) \otimes w^0(t)}{\ave{D}}\;\md t = \lim _{(\beta, \eps) \to (0,0)} \int _0 ^T \avepq{w^\beta(t) \otimes w^\beta(t)}{G(t/\eps)D}\;\md t. 
\]
\item
If there is $\alpha >0$ such that $Q^{1/2} D Q^{1/2} \geq \alpha I_m$, and
\[
\limsup _{(\beta, \eps) \to (0,0)} \int _0 ^T \avepq{w^\beta(t) \otimes w^\beta(t)}{G(t/\eps)D}\;\md t \leq  \int _0 ^T \avepq{w^0(t) \otimes w^0(t)}{\ave{D}}\;\md t
\]
then the family $(w^\beta)_{\beta >0}$ converges strongly in $L^2([0,T];X_P)$ toward $w^0$ when $\beta \searrow 0$. 
\end{enumerate}
\end{pro}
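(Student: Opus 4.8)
The plan is to obtain all three statements from the classical lower-semicontinuity/coercivity scheme for quadratic functionals, using only two analytic inputs: the cross-term convergence of Proposition~\ref{WeakLim} and the diagonal convergence \eqref{E94} of Remark~\ref{Aux}; everything else is the algebra of expanding a square. Since $D$ is symmetric, $G(s)D$ is symmetric for every $s$ (Proposition~\ref{CZGroup}, item~2), so for all $v,\tilde v\in L^2([0,T];X_P)$ one has
\[
\int_0^T\avepq{(v-\tilde v)\otimes(v-\tilde v)}{G(t/\eps)D}\md t = \int_0^T\avepq{v\otimes v}{G(t/\eps)D}\md t - 2\int_0^T\avepq{v\otimes\tilde v}{G(t/\eps)D}\md t + \int_0^T\avepq{\tilde v\otimes\tilde v}{G(t/\eps)D}\md t,
\]
and all these pairings are finite because $|G(t/\eps)D|_{H_Q^\infty}=|D|_{H_Q^\infty}$, exactly as in the proof of Proposition~\ref{WeakLim}.

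\textbf{First statement.} As $D$ is positive, Proposition~\ref{CZGroup} (item~3) gives that $G(t/\eps)D$ is positive for every $\eps$, hence $0\le\int_0^T\avepq{(w^\beta(t)-w^0(t))\otimes(w^\beta(t)-w^0(t))}{G(t/\eps)D}\md t$ for all $\beta,\eps>0$. I would expand this square with the identity above and pass to the $\liminf$ as $(\beta,\eps)\to(0,0)$: the cross term tends to $\int_0^T\avepq{w^0(t)\otimes w^0(t)}{\ave{D}}\md t$ by Proposition~\ref{WeakLim} applied with $\theta=w^0$, and the diagonal term $\int_0^T\avepq{w^0(t)\otimes w^0(t)}{G(t/\eps)D}\md t$ converges to the same quantity by \eqref{E94}. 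The claimed inequality follows at once.

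\textbf{Second statement.} Writing $w^\beta\otimes w^\beta - w^0\otimes w^0 = (w^\beta-w^0)\otimes w^\beta + w^0\otimes(w^\beta-w^0)$ and using the Cauchy-Schwarz estimate from the proof of Proposition~\ref{WeakLim} together with $|G(t/\eps)D|_{H_Q^\infty}=|D|_{H_Q^\infty}$, the two resulting integrals are bounded by $|D|_{H_Q^\infty}\,\|w^\beta-w^0\|_{L^2([0,T];X_P)}\big(\|w^\beta\|_{L^2([0,T];X_P)}+\|w^0\|_{L^2([0,T];X_P)}\big)$, which tends to $0$ uniformly in $\eps$ by strong convergence; combining this with \eqref{E94} applied to $w^0$ gives the equality. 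This uses neither a modulus of continuity nor weak compactness, as the statement asserts.

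\textbf{Third statement.} The extra ingredient is coercivity: $Q^{1/2}DQ^{1/2}\ge\alpha I_m$ yields $Q^{1/2}(G(t/\eps)D)Q^{1/2}\ge\alpha I_m$ for every $\eps$ (Proposition~\ref{CZGroup}, item~4), hence
\[
\alpha\,\|w^\beta-w^0\|_{L^2([0,T];X_P)}^2 \le \int_0^T\avepq{(w^\beta(t)-w^0(t))\otimes(w^\beta(t)-w^0(t))}{G(t/\eps)D}\md t.
\]
Expanding and taking $\limsup$ as $(\beta,\eps)\to(0,0)$: the diagonal term $\int_0^T\avepq{w^\beta\otimes w^\beta}{G(t/\eps)D}\md t$ has $\limsup$ at most $\int_0^T\avepq{w^0\otimes w^0}{\ave{D}}\md t$ by hypothesis, while the cross term and the $w^0\otimes w^0$ term each converge to $\int_0^T\avepq{w^0\otimes w^0}{\ave{D}}\md t$ (by Proposition~\ref{WeakLim} with $\theta=w^0$, and by \eqref{E94}); the three contributions cancel, so $\limsup \alpha\|w^\beta-w^0\|_{L^2([0,T];X_P)}^2\le 0$, which is strong convergence. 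I do not anticipate a genuine obstacle: once Propositions~\ref{WeakLim},~\ref{CZGroup} and \eqref{E94} are available the proof is pure bookkeeping, the only two points requiring attention being that every limit involving $\eps$ is a joint limit in $(\beta,\eps)$ — precisely what Proposition~\ref{WeakLim} delivers — and that positivity, resp. uniform coercivity, passes from $D$ to $G(t/\eps)D$ uniformly in $\eps$, which is exactly items~2--4 of Proposition~\ref{CZGroup}.
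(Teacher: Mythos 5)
Your proof is correct and relies on the same key ingredients as the paper's: items 2--4 of Proposition~\ref{CZGroup} for positivity and coercivity of $G(t/\eps)D$, Proposition~\ref{WeakLim} for the cross terms, and Remark~\ref{Aux} (formula~\eqref{E94}) for the diagonal terms. The one genuine variation is in statement~1: the paper bounds the cross term $\int_0^T\avepq{w^0\otimes w^\beta}{G(t/\eps)D}\,\md t$ via a pointwise Cauchy--Schwarz inequality for positive symmetric matrices followed by Cauchy--Schwarz over $(t,y)$, producing a product of two square roots, and then passes to the liminf and divides through by $\big(\int_0^T\avepq{w^0\otimes w^0}{\ave{D}}\,\md t\big)^{1/2}$; you instead expand the nonnegative quantity $\int_0^T\avepq{(w^\beta-w^0)\otimes(w^\beta-w^0)}{G(t/\eps)D}\,\md t\ge 0$ and take the liminf term by term. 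Both are valid and invoke positivity at the same point, but your route is slightly cleaner: it sidesteps the (trivial but unaddressed in the paper) degenerate case where $\int_0^T\avepq{w^0\otimes w^0}{\ave{D}}\,\md t=0$, and it makes statements~1 and~3 structurally twin arguments, differing only in whether positivity supplies a lower bound of $0$ or coercivity supplies a lower bound of $\alpha\|w^\beta-w^0\|^2_{L^2([0,T];X_P)}$. Statements~2 and~3 in your write-up match the paper's proof essentially verbatim.
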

\begin{proof} $\;$\\
1. As the matrix field $D$ is symmetric and positive, so is the matrix field $G(t/\eps)D$ for any $t \in [0,T]$ and $\eps >0$, and thus
\begin{align*}
\int_0 ^T & \avepq{w^0(t) \otimes w^\beta(t)}{G(t/\eps)D}\;\md t  = \inttTy{ w^0(t,y) \otimes w^\beta(t,y):G(t/\eps)D} \\
&  \leq \inttTy{(w^0(t,y) \otimes w^0(t,y):G(t/\eps)D)^{1/2}\; (w^\beta(t,y) \otimes w^\beta(t,y):G(t/\eps)D)^{1/2} } \\
& \leq \left ( \int _0 ^T \!\!\!\!\avepq{w^0(t) \otimes w^0(t)}{G(t/\eps)D}\;\md t  \right ) ^{1/2} \!\! \left (\int _0 ^T \!\!\!\!\avepq{w^\beta(t) \otimes w^\beta(t)}{G(t/\eps)D}\;\md t  \right ) ^{1/2}.
\end{align*}
Passing to the lower limit with respect to $(\beta, \eps)$ yields, thanks to Proposition \ref{WeakLim}
\begin{align}
\label{E91}
\int _0 ^T  \avepq{w^0(t)\otimes w^0(t)}{\ave{D}}\;\md t & \leq 
\liminf _{(\beta, \eps) \to (0,0)} \left \{ \left ( \int _0 ^T \avepq{w^0(t) \otimes w^0(t)}{G(t/\eps)D}\;\md t  \right ) ^{1/2} \right. \nonumber \\
& \times \left. \left (\int _0 ^T \avepq{w^\beta(t) \otimes w^\beta(t)}{G(t/\eps)D}\;\md t  \right ) ^{1/2} \right \}.
\end{align}
Thanks to Remark \ref{Aux}, we know that 
\begin{equation}
\label{E92}
\lime \int _0 ^T  \avepq{w^0(t)\otimes w^0(t)}{G(t/\eps)D}\;\md t = \int _0 ^T  \avepq{w^0(t)\otimes w^0(t)}{\ave{D}}\;\md t.
\end{equation}
Using the equality \eqref{E92} in the inequality \eqref{E91} leads to
\begin{align*}
\int _0 ^T  \avepq{w^0(t)\otimes w^0(t)}{\ave{D}}\;\md t &\leq \left (\int _0 ^T  \avepq{w^0(t)\otimes w^0(t)}{\ave{D}}\;\md t \right )^{1/2} \\
& \times \liminf_{(\beta, \eps) \to (0,0)} \left ( \int _0 ^T  \avepq{w^\beta(t)\otimes w^\beta(t)}{G(t/\eps)D}\;\md t\right )^{1/2}
\end{align*}
which is equivalent to our assertion.\\
2. Pick $\eta$ a positive real number. By Remark \ref{Aux}, there is $\eps (\eta)$ such that for any $0 < \eps < \eps (\eta)$
\[
\left | \int _0 ^T \avepq{w^0(t)\otimes w^0(t)}{G(t/\eps)D}\;\md t - \int_0 ^T \avepq{w^0(t)\otimes w^0(t)}{\ave{D}}\;\md t\right | < \frac{\eta}{2}.
\]
It is easily seen, thanks to the strong convergence of $(w^\beta)_{\beta>0}$ in $L^2([0,T];X_P)$ toward $w^0$, that there is $\beta (\eta)>0$ such that for any $0 < \beta < \beta(\eta), \eps >0$
\begin{align*}
& \left | \int _0 ^T \avepq{w^\beta(t)\otimes w^\beta(t)}{G(t/\eps)D}\;\md t - \int _0 ^T \avepq{w^0(t)\otimes w^0(t)}{G(t/\eps)D}\;\md t \right |  \\
& \leq |D|_{H_Q^\infty} \|w^\beta - w^0\|_{L^2([0,T];X_P)}\left ( \|w^\beta \|_{L^2([0,T];X_P)} + \|w^0 \|_{L^2([0,T];X_P)}\right) < \frac{\eta}{2}.
\end{align*}
Therefore the second assertion holds true, that is, for any $\eta >0$, there is $\beta (\eta) >0, \eps(\eta)>0$ such that 
\[
\left |  \int _0 ^T \avepq{w^\beta(t)\otimes w^\beta(t)}{G(t/\eps)D}\;\md t -  \int _0 ^T \avepq{w^0(t)\otimes w^0(t)}{\ave{D}}\;\md t\right | < \eta
\]
for any $0 < \beta < \beta(\eta), 0 < \eps < \eps(\eta)$. \\
3. We know by Proposition \ref{CZGroup} that $Q^{1/2} G(t/\eps) Q^{1/2} \geq \alpha I_m$, for any $t \in \R_+, \eps >0$ and therefore 
\begin{align*}
\alpha \|w^\beta & - w^0 \|^2 _{L^2([0,T];X_P)}  \leq \int _0 ^T \avepq{[w^\beta (t) - w^0 (t)]\otimes [w^\beta (t) - w^0 (t)]}{G(t/\eps)D} \;\md t \\
& = \int _0 ^T \avepq{w^\beta (t) \otimes w^\beta (t)}{G(t/\eps)D} \;\md t + 
\int _0 ^T \avepq{w^0 (t) \otimes w^0 (t)}{G(t/\eps)D} \;\md t\\
& - \int _0 ^T \avepq{w^\beta (t) \otimes w^0 (t)}{G(t/\eps)D} \;\md t - \int _0 ^T \avepq{w^0 (t) \otimes w^\beta (t)}{G(t/\eps)D} \;\md t.
\end{align*}
By Proposition \ref{WeakLim} we know that 
\begin{align*}
\lim_{(\beta, \eps) \to (0,0)}\int _0 ^T \!\!\!\!\!\avepq{w^\beta (t) \otimes w^0 (t)}{G(t/\eps)D} \;\md t & = \lim_{(\beta, \eps) \to (0,0)}\int _0 ^T \!\!\!\!\!\avepq{w^0 (t) \otimes w^\beta (t)}{G(t/\eps)D} \;\md t \\
& = \int _0 ^T \avepq{w^0 (t) \otimes w^0 (t)}{\ave{D}} \;\md t 
\end{align*}
and by Remark \ref{Aux} we have
\[
\lime \int _0 ^T \avepq{w^0 (t) \otimes w^0 (t)}{G(t/\eps)D} \;\md t = \int _0 ^T \avepq{w^0 (t) \otimes w^0 (t)}{\ave{D}} \;\md t.
\]
Finally we obtain
\begin{align*}
\alpha \limsup _{\beta \searrow 0} \|w^\beta - w^0\|_{L^2([0,T];X_P)}^2 & \leq \limsup _{(\beta, \eps) \to (0,0)}\int _0 ^T \avepq{w^\beta (t) \otimes w^\beta (t)}{G(t/\eps)D} \;\md t \\
& - \int _0 ^T \avepq{w^0 (t) \otimes w^0 (t)}{\ave{D}} \;\md t \leq 0
\end{align*}
saying that $(w^\beta)_{\beta >0}$ converges strongly in $L^2([0,T];X_P)$ toward $w^0$ when $\beta \searrow 0$. 
\end{proof}

\section{Proofs of the main theorems}
\label{Proofs}

We establish two convergence results. In Theorem \ref{MainRes1} we prove strong convergence results for the families $(\ve)_{\eps>0}$ in $\litloclty$ and $(\nabla _z \ve )_{\eps>0}$ in $\lttlocxp$. In Theorem \ref{MainRes2} we study the order of the above convergences, by introducing a corrector, that is, we justify the dominant term in the developement \eqref{Equ10}.

\begin{proof} (of Theorem \ref{MainRes1})\\
As $\ue$ is the variational solution of \eqref{Equ1}, we have for any $\Phi \in C^1_c (\R_+ \times \R^m)$
\begin{align}
\label{E95} 
- \intty{\ue (t,y) \partial _t \Phi } & - \inty{\uin (y) \Phi (0,y)} + \intty{D(y) \nabla _y \ue \cdny \Phi} \nonumber \\
& - \frac{1}{\eps} \intty{\ue (t,y) b(y) \cdny \Phi } = 0.
\end{align}
Actually the above formulation holds true for any compactly supported function in $\R_+ \times \R^m$, which belongs to $W^{1,\infty}(\R_+ \times \R^m)$. Pick a test function $\psi \in C^1_c (\R_+ \times \R^m)$ and let us introduce the function $\Phie (t,y) = \psi (t, \ymty)$, $(t,y) \in \R_+ \times \R^m$. Thanks to the hypotheses \eqref{Equ15}, \eqref{Equ17}, the function $\Phie$ is compactly supported in $\R_+ \times \R^m$, belongs to $W^{1,\infty}(\R_+ \times \R^m)$ and thus satisfies \eqref{E95}. We perform the change of variable $y = \ytz$. Taking the time and space derivatives of the equalities $\psi (t,z) = \Phie (t,\ytz)$ and $\ve (t,z) = \ue (t, \ytz)$ gives
\[
\partial _t \psi (t,z)  = \partial _t \Phie (t,\ytz) + \frac{1}{\eps} b(\ytz) \cdny \Phie (t, \ytz)
\]
\[
\nabla _z \psi (t,z) = \,^t \partial \ytz \nabla _y \Phie (t,\ytz),\;\;\nabla _z \ve (t,z) = \,^t \partial \ytz \nabla _y \ue (t,\ytz)
\]
and the weak formulation \eqref{E95}, written with the test function $\Phie (t,y)$ becomes 
\begin{align*}
- \inttz{& \ve (t,z) \partial _t \psi } - \intz{\uin (z) \psi (0,z)} \\
& + \inttz{\partial Y^{-1}(t/\eps;z)D(\ytz)\,^t \partial Y^{-1}(t/\eps;z)\nabla _z \ve \cdot \nabla _z \psi } = 0.
\end{align*}
Therefore $\ve$ is the variational solution of \eqref{Equ8}. By Propositions \ref{MoreEstimates}, \ref{MoreEstimatesBis} we have, for any $T \in \R_+$
\[
\supe \{ \|\ve \|_{\litlty} + \|\nrz \ve \|_{L^\infty([0,T];\lty)} + \|\nrz \otimes \nrz \ve \|_{L^\infty([0,T];\lty)} \} < +\infty
\]
\[
\supe \{ \|\partial _t \ve \|_{L^2([0,T];\lty)} + \|\partial _t \nrz \ve \|_{L^2([0,T];\lty)}\} < +\infty.
\]
Let us consider a sequence $(\epsk)_k$ converging to $0$ such that 
\begin{equation}
\label{E101} \limk \vek = v^0\;\mbox{ weakly } \star \mbox{ in } \litlty
\end{equation}
\begin{equation}
\label{E102} \limk \nabla _z \vek = \nabla _z v^0\;\mbox{ weakly } \star \mbox{ in } L^\infty([0,T];X_P),\;\;T \in \R_+.
\end{equation}
We claim that $v^0$ is the variational solution of \eqref{Equ14}. For any $\eta \in C^1 _c (\R_+)$ and $\Phi \in H^1_R$, the variational formulation of \eqref{Equ8} yields
\begin{align*}
- \inttz{\vek (t,z) \eta ^\prime (t) \Phi (z)} & - \intz{\uin (z) \eta (0) \Phi (z)} \\
& + \inttz{G(t/\epsk)D\nabla _z \vek \cdot \eta (t) \nabla _z \Phi } = 0.
\end{align*}
As $\eta ^\prime \Phi$ belongs to $\lotlty$, the weak $\star$ convergence in $\litlty$ of $(\vek)_k$ gives
\[
\inttz{\vek (t,z) \eta ^\prime (t) \Phi (z)} \underset{k\to +\infty}{\longrightarrow} \inttz{v^0 (t,z) \eta ^\prime (t) \Phi (z)}.
\]
We use now Proposition \ref{WeakLim} with $T>0$ such that $\supp \eta \subset [0,T[$, and $\mathcal{W}_\omega = \{w^k = \nabla _z \vek|_{[0,T]\times \R^m} \;:\;k \in \N\}$. Obviously, $\mathcal{W}_\omega$ is bounded in $L^2([0,T];X_P)$ and for any $k\in \N$, $t, t^\prime \in [0,T]$, we can write
\[
|\nabla _z \vek (t) - \nabla _z \vek (t^\prime) |_P = \|\nrz \vek (t) - \nrz \vek (t^\prime)\|_{L^2} \leq \!\sqrt{|t - t^\prime|}\, \|\partial _t \nrz \vek \|_{L^2([0,T];\lty)}.
\]
Therefore $\mathcal{W}_\omega$ is contained in $C([0,T];X_P)$ and admits the continuity modulus
\[
\omega (\lambda) = \sqrt{\lambda} \supe \|\partial _t \nrz \ve \|_{L^2([0,T];\lty)}.
\]
Applying Proposition \ref{WeakLim} with $\theta(t,z) = \eta(t) \nabla _z \Phi (z) \in L^2([0,T];X_P)$ we deduce that 
\begin{align*}
 \inttz{G(t/\epsk)D\nabla _z \vek \cdot \eta (t) \nabla _z \Phi } & =  \int _0 ^T \avepq{\eta (t) \nabla _z \Phi \otimes \nabla _z \vek (t)}{G(t/\epsk)D}\;\md t \\
& \underset{k\to +\infty}{\longrightarrow} \int _0 ^T \avepq{\eta (t) \nabla _z \Phi \otimes \nabla _z v^0 (t)}{\ave{D}}\;\md t \\
& = \inttz{\ave{D}\nabla _z v^0 \cdot \eta (t) \nabla _z \Phi}.
\end{align*}
Therefore, passing to the limit, when $k \to +\infty$, in the variational formulation of $\vek$, implies
\begin{align*}
- \inttz{v^0 (t,z) \eta ^\prime (t) \Phi (z)} & - \intz{\uin (z) \eta (0) \Phi (z)} \\
& + \inttz{\ave{D}\nabla _z v^0 \cdot \eta (t) \nabla _z \Phi } = 0
\end{align*}
and thus $v^0$ is the variational solution of \eqref{Equ14} ($v^0 = v$). By the uniqueness of the solution for the limit model \eqref{Equ14}, we deduce that the convergences in \eqref{E101}, \eqref{E102} hold with respect to $\eps \searrow 0$ 
\[
\lime \ve = v \mbox{ weakly } \star \mbox{ in } \litlty,\;\;\lime \nabla _z \ve = \nabla _z v \mbox{ weakly } \star \mbox{ in } L^\infty_{\mathrm{loc}}(\R_+;X_P).
\] 
The regularity of $v$ follows by Propositions \ref{MoreEstimatesv}, \ref{MoreEstimatesBisv}, in particular $\partial _t v \in L^2_{\mathrm{loc}}(\R_+;\lty)$. Actually the time derivative $\partial _t v$ belongs to $L^\infty _{\mathrm{loc}}(\R_+;\lty)$. This comes immediately by the regularity of $\ave{D}$. Indeed, by the proofs of Propositions \ref{MoreEstimatesv}, \ref{MoreEstimatesBisv} we know that $\divz (R \ave{D}) \in \liy, R\ave{D}\,^t R \in \liy$ and we obtain
\begin{align*}
\partial _t v & = \divz (\ave{D}\nabla _z v) = \divz(\ave{D}\,^t R \nrz v ) = \divz (R \ave{D}) \cdot \nrz v + R\ave{D}:\partial \nrz v \\
& = \divz (R \ave{D}) \cdot \nrz v + R\ave{D}\,^t R : \nrz \otimes \nrz v \in L^\infty_{\mathrm{loc}}(\R_+;\lty).
\end{align*}
We concentrate now on the strong convergence of $(\ve)_{\eps>0}$ in $L^\infty_{\mathrm{loc}}(\R_+;\lty)$ and $(\nabla _z \ve )_{\eps >0}$ in $L^\infty_{\mathrm{loc}}(\R_+;X_P)$. By the energy balance associated with \eqref{Equ8} we deduce
\begin{equation}
\label{E103} \|\ve (t) \|^2 _{\lty} + 2 \int _0 ^t \avepq{\nabla _z \ve (\tau)\otimes \nabla _z \ve (\tau)}{G(\tau/\eps)D}\;\md \tau = \|\uin\|^2 _{\lty},t \in \R_+.
\end{equation}
Similarly, the energy balance associated with \eqref{Equ14} gives
\begin{equation}
\label{E104} \|v (t) \|^2 _{\lty} + 2 \int _0 ^t \avepq{\nabla _z v (\tau)\otimes \nabla _z v (\tau)}{\ave{D}}\;\md \tau = \|\uin\|^2 _{\lty},t \in \R_+.
\end{equation}
By the first statement in Proposition \ref{StrongLim} we know that 
\begin{equation}
\label{E105}
\int _0 ^t \avepq{\nabla _z v (\tau) \otimes \nabla _z v (\tau)}{\ave{D}}\;\md \tau \leq \liminf _{\eps \searrow 0} \int _0 ^t \avepq{\nabla _z \ve (\tau) \otimes \nabla _z \ve (\tau)}{G(\tau/\eps)D}\;\md \tau.
\end{equation}
Combining \eqref{E103}, \eqref{E104}, \eqref{E105} one gets
\begin{align*}
\frac{1}{2}\limsup _{\eps \searrow 0} \{ \|\ve (t) \|^2 _{\lty} & - \|v(t) \|^2 _{\lty} \} = \limsup _{\eps \searrow 0} \left \{\int _0 ^t \avepq{\nabla _z v (\tau) \otimes \nabla _z v (\tau)}{\ave{D}}\;\md \tau \right.\\
& - \left. \int _0 ^t \avepq{\nabla _z \ve (\tau) \otimes \nabla _z \ve (\tau)}{G(\tau/\eps)D}\;\md \tau\right \} \\
& = \int _0 ^t \avepq{\nabla _z v (\tau) \otimes \nabla _z v (\tau)}{\ave{D}}\;\md \tau  \\
& - \liminf _{\eps \searrow 0} \int _0 ^t \avepq{\nabla _z \ve (\tau) \otimes \nabla _z \ve (\tau)}{G(\tau/\eps)D}\;\md \tau \leq 0
\end{align*}
saying that at any time $t \in \R_+$ we have
\begin{equation}
\label{E106} \limsup _{\eps \searrow 0} \|\ve (t) \|^2 _{\lty} \leq \|v(t)\|^2_{\lty}. 
\end{equation}
Applying Fatou lemma to the family of non negative functions $t \to \|\uin \|^2 _{\lty} - \|\ve (t) \|^2 _{\lty}$ we deduce that 
\[
\int _0 ^T \liminf _{\eps \searrow 0}\{ \|\uin \|^2 _{\lty} - \|\ve (t) \|^2 _{\lty}\}\;\md t \leq \liminf _{\eps \searrow 0}\int _0 ^T\{ \|\uin \|^2 _{\lty} - \|\ve (t) \|^2 _{\lty}\}\;\md t 
\]
or equivalently
\[
\limsup _{\eps \searrow 0}\int _0 ^T\|\ve (t) \|^2 _{\lty} \;\md t \leq \int _0 ^T \limsup _{\eps \searrow 0} \|\ve (t) \|^2 _{\lty}\;\md t.
\]
Therefore, the above inequality, together with the weak convergence of the family  $(\ve)_{\eps >0}$ in $L^2([0,T];\lty)$ toward $v$ and \eqref{E106} imply
\begin{align*}
\limsup _{\eps \searrow 0}\int _0 ^T\|\ve (t) \|^2 _{\lty} \;\md t \leq \int _0 ^T \|v(t)\|^2 _{\lty} \;\md t
\end{align*}
saying that $(\ve)_{\eps>0}$ converges strongly in $L^2([0,T];\lty)$ toward $v$ for any $T \in \R_+$
\[
\lime \int _0 ^T \|\ve (t) - v(t)\|^2 _{\lty}\;\md t = 0.
\]
There is a sequence $(\tepsk)_k$ converging to $0$ such that 
\begin{equation}
\label{E107} \limk \|v ^{\tepsk}(t) - v(t)\|^2 _{\lty} = 0,\;\;\mbox{for a.a. } t \in [0,T].
\end{equation}
As $\partial _t v \in L^2([0,T];\lty)$ and $\supe \|\partial _t \ve \|_{L^2([0,T];\lty)} < +\infty$, it is easily seen that \eqref{E107} holds true for any $t \in [0,T]$, $T \in \R_+$, and thus for any $t \in \R_+$. Actually we have
\[
\lime \|\ve (T) - v(T) \|^2 _{\lty} = 0,\;\;T \in \R_+
\]
which implies, thanks to \eqref{E103}, \eqref{E104}
\begin{align*}
\limsup _{\eps \searrow 0} \int _0 ^T \avepq{\nabla _z \ve (t) \otimes \nabla _z \ve (t)}{G(t/\eps)D}\;\md t & = \frac{1}{2} \|\uin \|^2 _{\lty} - \frac{1}{2}\lime \|\ve (T) \|^2 _{\lty} \\
& = \frac{1}{2} \|\uin \|^2 _{\lty} - \frac{1}{2} \|v (T) \|^2 _{\lty}\\
& = \int _0 ^T \avepq{\nabla _z v (t) \otimes \nabla _z v (t)}{\ave{D}}\;\md t.
\end{align*}
By the third statement of Proposition \ref{StrongLim} we deduce that $(\nabla _z \ve)_{\eps>0}$ converges strongly in $L^2([0,T];X_P)$ toward $\nabla _z v$, for any $T \in \R_+$. Finally, in order to prove the convergence of $(\ve )_{\eps>0}$ in $L^\infty(\R_+;\lty)$ toward $v$ we take the difference between the equations \eqref{Equ8} and \eqref{Equ14}
\[
\partial _t ( \ve - v ) - \divz \{ G(t/\eps)D \nabla _z \ve - \ave{D} \nabla _z v\} = 0,\;\;(t,z) \in \R_+ \times \R^m. 
\]
Writing the energy balance, we obtain for any $t \in \R_+$
\begin{align*}
\frac{1}{2}\|\ve (t) - v(t) \|^2 _{\lty}  &+ \int _0 ^t \avepq{[\nabla _z \ve (\tau) - \nabla _z v(\tau)] \otimes \nabla _z \ve (\tau)}{G(\tau/\eps)D}\;\md \tau \\
& - \int _0 ^t \avepq{[\nabla _z \ve (\tau) - \nabla _z v(\tau)] \otimes \nabla _z v (\tau)}{\ave{D}}\;\md \tau = 0.
\end{align*}
As in the proof of Proposition \ref{WeakLim} we have
\begin{align*}
\left | \avepq{[\nabla _z \ve (\tau) - \nabla _z v(\tau)] \otimes \nabla _z \ve (\tau)}{G(\tau/\eps)D}  \right | & \leq |D|_{H_Q^\infty} |\nabla _z \ve (\tau) - \nabla _z v(\tau) |_P |\nabla _z \ve (\tau) |_P
\end{align*}
and
\begin{align*}
\left | \avepq{[\nabla _z \ve (\tau) - \nabla _z v(\tau)] \otimes \nabla _z v (\tau)}{\ave{D}}  \right | & \leq |D|_{H_Q^\infty} |\nabla _z \ve (\tau) - \nabla _z v(\tau) |_P |\nabla _z v (\tau) |_P
\end{align*}
and we deduce that for any $t \in [0,T]$ we have
\[
\|(\ve - v)(t) \|^2 _{L^2}  \leq 2|D|_{H_Q^\infty} \|\nabla _z \ve  - \nabla _z v \|_{L^2([0,T];X_P)} ( \|\nabla _z \ve \|_{L^2([0,T];X_P)} + \|\nabla _z v \|_{L^2([0,T];X_P)}).
\]
The strong convergence of $(\ve)_{\eps >0}$ in $L^\infty([0,T];\lty)$ toward $v$ comes by the strong convergence of $(\nabla _z \ve )_{\eps >0}$ in $L^2([0,T];X_P)$ toward $\nabla _z v$, when $\eps \searrow 0$. 
\end{proof}
\begin{remark}
The strong convergence of $(\ve)_{\eps >0}$ in $L^\infty_{\mathrm{loc}}(\R_+; \lty)$, when $\eps \searrow 0$, holds true for initial conditions $\uin \in \lty$. Indeed, for any $\uin \in \lty$, $T \in \R_+, \delta >0$, let us consider $\uin_{\delta}\in H^{2}_{R}$ such that $\|\uin-\uin_{\delta}\|_{\lty} \leq \delta/2$. We denote by $\ve _\delta$ (resp. $v_\delta$) the variational solution of \eqref{Equ8} (resp. \eqref{Equ14}) with the initial condition $\uin _\delta$. Thanks to the energy balance we obtain easily 
\begin{align*}
\|v^{\eps}-v\|_{L^{\infty}([0,T];\lty)}& \leq \|v^{\eps}-v^{\eps}_{\delta}\|_{L^{\infty}([0,T];\lty)}+\|v^{\eps}_{\delta}-v_{\delta}\|_{L^{\infty}([0,T];\lty)}\\
& +\|v_{\delta}-v\|_{L^{\infty}([0,T];\lty)}\\
& \leq 2 \|\uin - \uin _\delta \|_{\lty} + \|\ve _\delta - v_\delta\|_{L^{\infty}([0,T];\lty)}.
\end{align*}
By Theorem \ref{MainRes1} we know that 
\[
\lime \|\ve_\delta - v_\delta \|_{L^\infty([0,T];\lty)} = 0
\]
and therefore 
\[
\limsup _{\eps \searrow 0} \|v^{\eps}-v\|_{L^{\infty}([0,T];\lty)}\leq \delta,\;\;\delta >0
\]
saying that $\lime \|v^{\eps}-v\|_{L^{\infty}([0,T];\lty)} = 0$, for any $T \in \R_+$. 
\end{remark}
\begin{remark}
\label{ChainRuleDiff}
The computations in the proof of Theorem \ref{MainRes1} show that for any smooth matrix field $C$ and any locally integrable function $v = v(z)$ we have
\[
\left ( \divz (G(s) C \,\nabla _z v)\right )_{-s} = \divy \{ C \nabla _y ( v_{-s}) \}\;\;\mbox{ in } \dpri{}.
\]
\end{remark}
The above considerations show that $\ve = v + o(1)$ in $L^\infty_{\mathrm{loc}}(\R_+;\lty)$, when $\eps \searrow 0$. As suggested by \eqref{Equ10}, we expect a convergence rate in $\calO (\eps)$. This can be achieved assuming that the limit solution $v$ is smooth enough and that there is a smooth matrix field $C$ such that 
\begin{equation}
\label{CloseR}
D = \ave{D} + L(C).
\end{equation}
The existence of the matrix field $C$ is essential when constructing the corrector term $u^1$, see \eqref{EquCcorr}. Notice that Proposition \ref{PropOpeL} guarantees that $D - \ave{D} \in \overline{\ran L}$, and thus \eqref{CloseR} holds true if the range of $L$ is closed. Moreover, we will assume without loss of generality that $C \in (\ker L )^\perp$, which implies also that $^t C \in (\ker L )^\perp$. As $D$ is symmetric, so is $\ave{D}$, and thus $L(C- \,^t C) = 0$. Finally $
C- \,^t C\in \ker L \cap (\ker L )^\perp$ = \{0\}, saying that $C$ is symmetric. 
\begin{proof} (of Theorem \ref{MainRes2})\\
We introduce the functions $\tue (t,y) = v(t, \ymty), (t,y) \in \R_+ \times \R^m, \eps >0$. As in the proof of Theorem \ref{MainRes1} we check that $\tue$ is the variational solution of the problem
\[
\left \{ 
\begin{array}{lll}
\partial _t \tue - \divy \{ G(t/\eps)\ave{D}\nabla _y \tue\} + \frac{1}{\eps} b(y) \cdny \tue = 0,& (t,y) \in \R_+ \times \R^m\\
\tue (0,y) = \uin (y),& y \in \R^m.
\end{array}
\right.
\]
For doing that, pick a smooth compactly supported test function $\Phi (t,y)$ and appeal to the weak formulation of $v$, with the test function $\psie (t,z) = \Phi (t, \ytz), (t,z ) \in \R_+ \times \R^m$. By construction, the average matrix field $\ave{D}$ belongs to $\ker L$ implying that $G(t/\eps)\ave{D} = \ave{D}$. Therefore the functions $(\tue)_{\eps>0}$ solve the problems 
\begin{equation}
\label{E111}
\left \{ 
\begin{array}{lll}
\partial _t \tue - \divy \{\ave{D}\nabla _y \tue\} + \frac{1}{\eps} b(y) \cdny \tue = 0,& (t,y) \in \R_+ \times \R^m\\
\tue (0,y) = \uin (y),& y \in \R^m.
\end{array}
\right.
\end{equation}
Recall that the functions $(\ue)_{\eps>0}$ satisfy
\begin{equation}
\label{E113}
\left \{ 
\begin{array}{lll}
\partial _t \ue - \divy \{D\nabla _y \ue\} + \frac{1}{\eps} b(y) \cdny \ue = 0,& (t,y) \in \R_+ \times \R^m\\
\ue (0,y) = \uin (y),& y \in \R^m.
\end{array}
\right.
\end{equation}
Notice that both families $(\tue)_{\eps>0}, (\ue)_{\eps>0}$ verify the same initial condition. The key point for obtaining a convergence rate is to introduce a corrector term. We consider the function
\begin{align}
\label{EquCcorr}
u^1(t,s,y) & = - \divz (C \nabla _z v(t))(\ymsy) + \divy \{C(y)\nabla _y v (t, \ymsy)\} \\
& = - \tau (-s) \divz (C \nabla _z v(t)) + \divy \{C(y) \nabla _y [\tau (-s) v(t)]\},\;\;(t,s,y) \in \R_+\times \R \times \R^m \nonumber 
\end{align}
where we use the notation $\tau (s) f = f \circ Y(s;\cdot)$ for any function $f$. By Remark \ref{ChainRuleDiff} we have
\begin{equation}
\label{E119}
u^1 (t,s,\ysz) = \divz (G(s)C\nabla _z v(t)) - \divz (C \nabla _z v(t))
\end{equation}
and taking the derivative with respect to $s$ (here $L$ is the infinitesimal generator of the group $G$) leads to
\begin{align*}
\partial _s u^1 (t, s, \ysz) + b(\ysz) \cdny u^1 (t, s, \ysz) & = \divz \left \{\frac{\md }{\md s}G(s)C \,\nabla _z v(t)  \right \}\\
& = \divz \{ G(s)L(C) \nabla _z v(t)\}\\
& = \{\divy[L(C) \nabla _y \tau (-s) v(t)] \} (\ysz).
\end{align*}
Notice that for the last equality we have used one more time Remark \ref{ChainRuleDiff}. Therefore the corrector $u^1$ verifies
\begin{equation}
\label{E117}
(\partial _s  + b(y) \cdny )u^1 (t,s,y) - \divy \{ L(C) \nabla _y v(t,Y(-s;\cdot))\}(y) = 0,\;\;(t,s,y) \in \R_+ \times \R \times \R^m
\end{equation}
and by definition $u^1(t,0,y) = 0,\;(t,y) \in \R _+ \times \R^m$. The equation  \eqref{E117} is exactly the equality coming out at the leading order when plugging the Ansatz $\ue (t,y) = v(t, \ymty) + \eps u^1 (t, t/\eps, y) +...$ into \eqref{E113}. Indeed, the above Ansatz also writes
\[
\ue (t,\ytz) = v(t,z) + \eps u^1 (t,t/\eps, \ytz) + ...
\]
and by observing that 
\begin{align*}
\frac{\md }{\md t}\ue (t, \ytz) & = \partial _t \ue (t, \ytz) + \frac{1}{\eps} b(\ytz) \cdny \ue (t, \ytz) \\
& = [ \divy (D \nabla _y \ue (t)   ) ](\ytz) = \divz [ G(t/\eps) D \nabla _z \ue (t,\ytz)]
\end{align*}
we obtain
\begin{align}
\label{E118}
\partial _t v(t,z) & + \eps \partial _t u^1 (t,t/\eps, \ytz) + \partial _s u^1 (t,t/\eps, \ytz) \\
& + b(\ytz) \cdny u^1 (t,t/\eps, \ytz) + ...  = \divz(G(t/\eps)D \nabla _z v ) \nonumber \\
& + \divz[G(t/\eps)D \nabla _z ( \eps u^1 (t,t/\eps, \ytz))] + ...\;.\nonumber
\end{align}
Taking into account that $\partial _t v = \divz(\ave{D}\nabla _z v)$, we deduce from \eqref{E118}, thanks to Remark \ref{ChainRuleDiff}, that 
\begin{align*}
(\partial _s  + b(y) \cdny )u^1 (t,s,y) & = \tau (-s) \divz [ G(s)D \nabla _z v(t) ] - \tau (-s) \divz [\ave{D}\nabla _z v(t)]\\
& = \divy [(D - \ave{D})\nabla _y \tau (-s)v(t)] \\
& = \divy [L(C) \nabla _y \tau (-s) v(t)]
\end{align*}
which corresponds to \eqref{E117}. In particular, for $s = t/\eps$, one gets
\[
(\partial _s  + b(y) \cdny )u^1 (t,t/\eps,y) - \divy(L(C)\nabla _y \tue (t))(y) = 0,\;(t,y) \in \R_+ \times \R^m,\eps >0
\]
and we obtain the following equation for $\tueo : = u^1 (t, t/\eps, y)$
\begin{align}
\label{E115}
\partial _t (\eps \tueo )(t,y) - \divy (L(C)\nabla _y \tue (t)) + \frac{1}{\eps} b(y) \cdny (\eps \tueo)(t,y) = \eps \partial _t u^1 (t,t/\eps,y).
\end{align}
Taking the sum between the equation in \eqref{E111} and \eqref{E115} yields
\begin{equation}
\label{E116}
\partial _t (\tue + \eps \tueo ) - \divy [(\ave{D} + L(C)) \nabla _y \tue ] + \frac{1}{\eps}b(y) \cdny (\tue + \eps \tueo ) = \eps \partial _t u^1 (t,t/\eps, y)
\end{equation}
which also writes, thanks to \eqref{CloseR}
\[
\partial _t (\tue + \eps \tueo ) - \divy [D \nabla _y (\tue + \eps \tueo)] + \frac{1}{\eps}b(y) \cdny (\tue + \eps \tueo ) = \eps[ \partial _t u^1  -  \divy (D\nabla _y u^1)](t,t/\eps, y).
\] 
Combining \eqref{E113} and \eqref{E116}, it is easily seen that 
\begin{align*}
\partial _t ( \ue - \tue - \eps \tueo) - \divy [D\nabla _y ( \ue - \tue - \eps \tueo)] & + \frac{1}{\eps} b(y)\cdny ( \ue - \tue - \eps \tueo) \\
& = -\eps[ \partial _t u^1  -  \divy (D\nabla _y u^1)](t,t/\eps, y).
\end{align*}
Using the energy balance together with the hypothesis $Q^{1/2}DQ^{1/2} \geq \alpha I_m$ we obtain
\begin{align*}
\frac{1}{2}\frac{\md }{\md t} \|\ue - \tue - \eps \tueo\|^2 _{\lty} & + \alpha |\nabla _y (\ue - \tue - \eps \tueo)|^2_P  \leq \eps 
 \|\ue - \tue - \eps \tueo\|_{\lty}\\
& \times \|\partial _t u^1 (t,t/\eps,\cdot) - \divy (D \nabla _y u^1 (t,t/\eps,\cdot)\|_{\lty},\;\;t \in \R_+.
\end{align*}
Notice that $(\ue - \tue - \eps \tueo)|_{t=0} = \uin - \uin - 0= 0$ and therefore, after integration with respect to $t \in [0,T]$, one gets
\[
\|\ue - \tue - \eps \tueo\|_{L^\infty([0,T];\lty)} \leq \eps \int _0 ^T \|\partial _t u^1 (t,t/\eps,\cdot) - \divy (D \nabla _y u^1 (t,t/\eps,\cdot)\|_{\lty}\;\md t
\]
and
\begin{align*}
\alpha \int _0 ^T |\nabla _y (\ue - \tue - \eps \tueo)|_P ^2 \;\md t & \leq \eps \|\ue - \tue - \eps \tueo\|_{L^\infty([0,T];\lty)}\\
& \times \int _0 ^T \|\partial _t u^1 (t,t/\eps,\cdot) - \divy (D \nabla _y u^1 (t,t/\eps,\cdot)\|_{\lty}\;\md t\\
& \leq \eps ^2 \left ( \int _0 ^T \|\partial _t u^1 (t,t/\eps,\cdot) - \divy (D \nabla _y u^1 (t,t/\eps,\cdot)\|_{\lty}\;\md t\right ) ^2.
\end{align*}
We are done if the corrector $u^1(t,s,y)$ satisfies uniform estimates with respect to the fast variable $s$
\[
u^1 \in L^\infty([0,T];L^\infty(\R_s;\lty)),\;\;\partial _t u^1 \in L^1([0,T];L^\infty(\R_s;\lty))
\]
\[
\divy(D\nabla _y u^1) \in L^1([0,T];L^\infty(\R_s;\lty)),\;\;\nabla _y u^1 \in L^2([0,T];L^\infty(\R_s;X_P)).
\]
Let us estimate the $\lty$ norm of $u^1$, uniformly with respect to $(t,s) \in [0,T]\times \R$. Thanks to \eqref{E119} we have
\begin{align*}
\|u^1 (t,s,\cdot) \|_{\lty}& \leq \|\divz (G(s)C \nabla _z v(t)) - \divz(C \nabla _z v(t))\|_{\lty} \\
& \leq 2 \sup _{s\in \R} \|\divz (G(s)C \nabla _z v(t))\|_{\lty}.
\end{align*}
For any $s\in \R$ we can write, using the formula $\divz(X\xi) = \divz \,^t X \cdot \xi + \,^t X : \partial _z \xi$, for any smooth matrix field $X$ and vector field $\xi$
\begin{align}
\label{E120}
\divz(G(s)C \nabla _z v(t)) & = \divz (G(s)C \,^t R \nrz v(t)) \\
& = \divz(RG(s)C) \cdot \nrz v(t) + RG(s)C\,^t R: \partial _z \nrz v(t)R^{-1} \nonumber \\
& = \divz(RG(s)C) \cdot \nrz v(t) + RG(s)C\,^t R : \nrz \otimes \nrz v(t). \nonumber
\end{align}
We claim that $\divz(RG(s)C) = \tau (s)\divy (RC)$. Indeed, for any smooth compactly supported vector field $\Phi = \Phi (y)$ we have, thanks to \eqref{E43}
\begin{align}
\label{DivRG}
\intz{\divz (RG(s)C) \cdot \Phi (\ysz)} & = - \intz{RG(s)C : \partial _z \{\Phi (\ysz)\}}\\
& = - \intz{RG(s)C\,^t R:(\partial _y \Phi)(\ysz) \partial \ysz R^{-1}}\nonumber \\
& = - \intz{(RC\,^tR)(\ysz): (\partial _y \Phi R^{-1})(\ysz)} \nonumber\\
& = - \inty{RC\,^tR: \partial _y \Phi R^{-1}}\nonumber\\
& = - \inty{RC:\partial _y \Phi} \nonumber\\
& = \inty{\divy(RC)\cdot \Phi (y)} \nonumber\\
& = \intz{\tau (s) [\divy (RC)] \cdot \Phi (\ysz)}.\nonumber
\end{align}
Coming back to \eqref{E120} we obtain
\begin{equation}
\label{E122}
\divz (G(s)C\nabla _z v(t))  = \tau (s) [\divy (RC)] \cdot \nrz v(t) + \tau (s) (RC\,^tR): \nrz \otimes \nrz v(t)
\end{equation}
and therefore
\[
\|\divz (G(s)C\nabla _z v(t))\|_{\lty} \leq \|\divy (RC)\|_{\liy} \|\nrz v(t)\|_{\lty} + |C|_{H_Q ^\infty} \|\nrz \otimes \nrz v(t)\|_{\lty}
\]
saying that 
\begin{align*}
\|u^1\|_{L^\infty([0,T];L^\infty(\R_s;\lty))}& \leq 2 \|\divy (RC)\|_{\liy} \|\nrz v\|_{L^\infty([0,T];\lty)} \\
& + 2 |C|_{H_Q ^\infty} \|\nrz \otimes \nrz v(t)\|_{L^\infty([0,T];\lty)}.
\end{align*}
Similarly, taking the derivative of \eqref{E119} with respect to $t$ yields
\begin{align*}
\|\partial _t u^1\|_{L^1([0,T];L^\infty(\R_s;\lty))}& \leq 2 \|\divy (RC)\|_{\liy} \|\nrz \partial _t v\|_{L^1([0,T];\lty)} \\
& + 2 |C|_{H_Q ^\infty} \|\nrz \otimes \nrz \partial _t v(t)\|_{L^1([0,T];\lty)}.
\end{align*}
It remains to estimate the space derivatives of $u^1$. The key point is that $\nabla ^R $ commutes with $\tau (s)$, i.e.
\[
\nrz (\tau(s) f) = \nrz \{f(\ys)\}= (\nry f)(\ys) = \tau (s) (\nry f)
\]
for any smooth function $f = f(y)$. Indeed, for any $i \in \{1,...,m\}$ we have
\begin{align*}
b_i \cdnz (\tau (s) f )(z) & = \lim_{h\to 0} \frac{f(Y(s;Y_i(h;z))) - f(\ysz)}{h} \\
& =  \lim_{h\to 0} \frac{f(Y_i(h;Y(s;z))) - f(\ysz)}{h}\\
& = b_i (\ysz) \cdot (\nabla _y f)(\ysz) = \tau (s) (b_i \cdny f)(z).
\end{align*}
Applying the operator $\nabla ^R$ in \eqref{E119} and using \eqref{E120}, \eqref{E122} lead to
\begin{align}
\label{E123}
(\nry u^1(t,s,\cdot))(\ys) & = \nrz u^1 (t,s,\ys) = \nrz [\divz(G(s)C\nabla _z v(t)) - \divz(C\nabla _z v(t))]\nonumber \\
& = \nrz [\tau (s) (\divy (RC)) \cdot \nrz v(t) + \tau (s) (RC\,^tR):\nrz \otimes \nrz v(t)]\nonumber \\
& - \nrz [ \divz (RC) \cdot \nrz v(t) +  (RC\,^tR):\nrz \otimes \nrz v(t)].
\end{align}
Appealing one more time to the commutation between $\tau (s) $ and $\nabla ^R$ we deduce that for any $k \in \{1,...,m\}$
\begin{align}
\label{E124}
& b_k \cdnz[\tau (s) (\divy (RC)) \cdot \nrz v(t) + \tau (s) (RC\,^tR):\nrz \otimes \nrz v(t)] \\
& = \tau (s) (b_k \cdny \divy(RC)) \cdot \nrz v(t) + \tau (s) \divy(RC) \cdot (b_k \cdnz \nrz v(t)) \nonumber \\
& + \tau (s) (b_k \cdny  (RC\,^tR)) : \nrz \otimes \nrz v(t) + \tau (s)(RC\,^tR): b_k \cdnz (\nrz \otimes \nrz v(t)).\nonumber
\end{align}
Therefore there is a constant $K$ depending on $\|\divy (RC)\|_{\liy} + \|RC\,^tR \|_{\liy} + \sum_{k = 1} ^m \|b_k \cdny \divy(RC)\|_{\liy} + \sum_{k = 1} ^m\|b_k \cdny (RC\,^tR)\|_{\liy} $ such that 
\begin{align*}
\|\nry u^1 (t,\cdot,\cdot)\|_{L^\infty(\R_s;\lty)} & \leq K\{\|\nrz v(t)\|_{\lty} + \|\nrz \otimes \nrz v(t)\|_{\lty} \\
& + \|\nrz \otimes \nrz \otimes \nrz v(t)\|_{\lty}\}.
\end{align*}
We deduce that 
\begin{align*}
\|\nabla _y u^1 \|_{L^2([0,T];L^\infty(\R_s;X_P))} & \leq K\{\|\nrz v\|_{L^2([0,T];\lty)} + \|\nrz \otimes \nrz v\|_{L^2([0,T];\lty)} \\
& + \|\nrz \otimes \nrz \otimes \nrz v\|_{L^2([0,T];\lty)}\}.
\end{align*}
For the second space derivatives of $u^1$, we write as before
\[
\divy (D \nabla _y u^1)  = \divy (D \,^t R \nry u^1) = \divy (RD) \cdot \nry u^1 + RD\,^t R : \nry \otimes \nry u^1.
\]
Notice that $\divy (RD) \cdot \nry u^1$ belongs to $L^1([0,T];L^\infty(\R_s;\lty))$ since by hypotheses $\divy (RD) \in \liy$ and we already know that $\nabla _y u^1 \in L^2([0,T];L^\infty(\R_s;X_P))$. As the matrix field $RD\,^t R$ belongs to $\liy$, it remains to check that $\nry \otimes \nry u^1$ belongs to $ L^1([0,T];L^\infty(\R_s;\lty))$. For doing that, we apply one more time the operator $\nrz$ in \eqref{E123}, or equivalently the operator $b_l \cdnz$ in \eqref{E124}. Using again the commutation between $\tau (s)$ and $b_l \cdnz$ we obtain
\begin{align*}
& b_l \cdnz \{ b_k \cdnz [\tau (s) (\divy (RC)) \cdot \nrz v(t) + \tau (s) (RC\,^tR):\nrz \otimes \nrz v(t)]\} \\
& = [b_l \cdny(b_k \cdny (\divy(RC)))]_s \cdot \nrz v(t) +  [b_k \cdny (\divy(RC))]_s \cdot [b_l \cdnz(\nrz v(t))]\\
& + [b_l \cdny(\divy(RC))]_s \cdot (b_k \cdnz (\nrz v(t))) + (\divy(RC))_s \cdot \{b_l \cdnz [b_k \cdnz(\nrz v(t))]\}\\
& +  [b_l \cdny (b_k \cdny (RC\,^tR))]_s : \nrz \otimes \nrz v(t) +  [b_k \cdny (RC\,^tR)]_s:b_l \cdnz (\nrz \otimes \nrz v(t))\\
& +  [b_l \cdny (RC\,^tR)]_s : b_k \cdnz (\nrz \otimes \nrz v(t)) +  (RC\,^tR)_s : b_k \cdnz (b_l \cdnz (\nrz \otimes \nrz v(t)))
\end{align*}
which belongs to $L^1([0,T];L^\infty(\R_s;\lty))$, thanks to the hypotheses on the matrix field $C$ and the solution $v$. 
\end{proof}

\appendix
\section{Proofs of Propositions \ref{MoreEstimatesBis}, \ref{MoreEstimatesv}}
\label{A}

\begin{proof}(of Proposition \ref{MoreEstimatesBis})\\
For any $i,j,k\in \{1,...,m\}$ we introduce the notations $\ue _i = b_i \cdny \ue, \ve _i = b_i \cdnz \ve, \ue _{ij} = b_j \cdny ( b_i \cdny \ue), \ve_{ij} = b_j \cdnz (b_i \cdnz \ve), \ue_{ijk} = b_k \cdny (b_j \cdny (b_i \cdny \ue))$ and $\ve _{ijk} = b_k \cdnz (b_j \cdnz (b_i \cdnz \ve))$. With these notations, the equation \eqref{E48} becomes
\[
\partial _t \ue _i - \divy (D(y) \nabla _y \ue _i) + \frac{1}{\eps} b \cdny \ue _i= \divy ([b_i,D]\,^t R \nry \ue ) + D\,^t R \nry \ue \cdny \divy b_i.
\]
Taking now the directional derivative $b_j \cdny $, yields
\begin{align}
\label{E59}
\partial _t \ue _{ij} - \divy (D(y) \nabla _y \ue _{ij}) & + \frac{1}{\eps} b \cdny \ue _{ij}  = \divy ([b_j,D]\,^t R \nry \ue_i ) + R D\;^t R \nry \ue _i \cdot \nry \divy b_j \nonumber \\
& + b_j \cdny \divy ([b_i,D]\nabla _y \ue ) + b_j \cdny (D \,^t R \nry \ue  \cdny \divy b_i).
\end{align}
Thanks to the commutation formula \eqref{ComFor}, we have
\begin{align}
\label{E60} 
& b_j \cdny \divy ([b_i,D]\nabla _y \ue)  = [b_j \cdny, \divy ([b_i,D]\nabla _y )]\ue + \divy ([b_i,D]\nabla _y (b_j \cdny \ue))\nonumber \\
& = \divy ([b_j, [b_i,D]]\nabla _y \ue ) + [b_i,D]\nabla _y \ue \cdot \nabla _y \divy b_j + \divy ([b_i,D]\nabla _y (b_j \cdny \ue)) \nonumber \\
& = \divy ([b_j, [b_i,D]]\,^t R \nry \ue ) + R[b_i,D]\,^t R \nry \ue \cdot \nry \divy b_j + \divy ([b_i,D]\,^t R\nry \ue _j).
\end{align}
Combining \eqref{E59}, \eqref{E60} we obtain
\begin{align*}
\partial _t \ue _{ij} - \divy (D(y) \nabla _y \ue _{ij} ) & + \frac{1}{\eps} b \cdny \ue _{ij}  = \divy ([b_j,D]\,^t R \nry \ue _i ) + R D \,^t R \nry \ue _i \cdot \nry \divy b_j \\
& + \divy ([b_j, [b_i,D]]\,^t R \nry \ue ) + R[b_i,D]\,^t R \nry \ue \cdot \nry \divy b_j\\
& + \divy([b_i,D]\,^t R \nry \ue _j) + b_j \cdny (D\,^t R \nry \ue \cdny \divy b_i).
\end{align*}
Multiplying by $\ue _{ij}$ and integrating on $\R^m$ lead to
\begin{align}
\label{E61}
\frac{1}{2}\frac{\md }{\md t} \inty{(\ue _{ij})^2} & + \inty{D\nabla _y \ue _{ij} \cdot \nabla _y \ue _{ij}} = - \inty{R[b_j,D]\,^t R \nry \ue _i \cdot \nry \ue _{ij} } \nonumber \\
& + \inty{RD\,^t R\nry \ue _i \cdot \nry \divy b_j \, \ue _{ij}} + \inty{\divy ([b_j,[b_i,D]]\,^t R \nry \ue ) \ue _{ij}}\nonumber \\
& + \inty{R[b_i,D]\,^t R\nry \ue \cdot \nry (\divy b_j) \ue _{ij}} + \inty{\ue _{ij} \divy ( [b_i,D]\,^t R \nry \ue _j)}\nonumber \\
& + \inty{\ue _{ij} b_j \cdny (D \,^t R \nry \ue \cdny (\divy b_i))}\nonumber \\
& =: K^1 _{ij} + K^2 _{ij} +K^3 _{ij} +K^4 _{ij} +K^5 _{ij} +K^6 _{ij}.
\end{align}
By hypothesis \eqref{E46} we have, cf. \eqref{EquCoerH1R}
\begin{align}
\label{E62} 
D \nabla _y \ue _{ij} \cdot \nabla _y \ue _{ij} & 
\geq \alpha |P^{1/2} \nabla _y \ue _{ij}|^2 = \alpha |\nry \ue _{ij}|^2.
\end{align}
Exactly as before we obtain
\[
\nry (\ue _{ij}) = b_j \cdny (\nry \ue _i) - \mathcal{A}_j \nry \ue _i
\]
which allows us to replace $K^1 _{ij}$ by 
\begin{align*}
K^1 _{ij} & = \inty{R[b_j,D]\,^t R \nry \ue _i (t) \cdot \mathcal{A}_j \nry \ue _i (t)} \\
& + \frac{1}{2} \inty{b_j \cdny (R[b_j,D]\,^t R):\nry \ue _i (t) \otimes \nry \ue _i (t) } \\
& + \frac{1}{2}\inty{(\divy b_j) R [b_j,D]\,^t R : \nry \ue_i (t) \otimes \nry \ue_i (t)}.
\end{align*}
Thanks to our hypotheses, there is a constant $C_3$ (not depending on $\eps$ or $t$) such that
\[
\sum _{j = 1} ^m \sum _{i = 1} ^m |K_{ij} ^1| \leq C_3 \|\nry \otimes \nry \ue (t)\|^2 _{\lty},\;\;t \in \R_+, \;\;\eps >0.
\]
Obviously, there is a constant $C_4$  (not depending on $\eps$ or $t$) such that
\[
\sum _{j = 1} ^m \sum _{i = 1} ^m |K_{ij} ^2| \leq C_4 \|\nry \otimes \nry \ue (t)\|^2 _{\lty},\;\;t \in \R_+, \;\;\eps >0.
\]
We consider now the term $K^3_{ij}$, which writes
\begin{align*}
K^3 _{ij} & = \inty{\ue _{ij} (t) \divy (R[b_j,[b_i,D]])\cdot \nry \ue (t) } \\
& + \inty{\ue _{ij} (t) R[b_j,[b_i,D]]\,^t R :\nry \otimes \nry \ue (t) }.
\end{align*}
It is easily seen that there is a constant $C_5$ (not depending on $\eps$ or $t$) such that
\begin{align*}
\sum _{j = 1} ^m \sum _{i = 1} ^m ( |K_{ij} ^3| + |K_{ij} ^4| + |K_{ij} ^6|)& \leq C_5 \left ( \|\nry \otimes \nry \ue (t)\| _{\lty} \|\nry \ue (t)\|_{\lty} \right.\\
& \left.+\|\nry \otimes \nry \ue (t)\|^2 _{\lty} \right ) ,\;\;t \in \R_+, \;\;\eps >0.
\end{align*}
It remains to estimate the term $K^5_{ij}$. For any $i,j \in \{1,...,m\}$ we have
\[
\ue _{ij} (t) = \ue _{ji}(t) - \sum _{k = 1}^m \alpha _{ij} ^k \ue _k (t)
\]
and therefore $K^5 _{ij}$ writes
\begin{align*}
K^5 _{ij} & = - \sum _{k = 1} ^m \inty{\alpha _{ij} ^k \ue _k (t) \divy ([b_i,D]\;^t R \nry \ue _j (t))} \\
& + \inty{\ue _{ji} (t) \divy ( [b_i,D]\,^t R \nry \ue _j (t))} \\
& = \sum _{k = 1} ^m \inty{\nry (\alpha _{ij} ^k \ue _k (t))\cdot R[b_i,D]\,^t R \nry \ue _j (t)}\\
& - \inty{\nry (b_i \cdny \ue _j (t)) \cdot R [b_i,D]\,^t R \nry \ue _j (t)} = : K^7_{ij} + K^8_{ij}.
\end{align*}
Clearly, there is a constant $C_6$ (not depending on $\eps$ or $t$) such that
\begin{align*}
\sum _{j = 1} ^m \sum _{i = 1} ^m  |K_{ij} ^7| & \leq C_6 \left ( \|\nry \otimes \nry \ue (t)\| _{\lty} \|\nry \ue (t)\|_{\lty} \right.\\
& \left.+\|\nry \otimes \nry \ue (t)\|^2 _{\lty} \right ) ,\;\;t \in \R_+, \;\;\eps >0.
\end{align*}
For the last term $K^8 _{ij}$ we use \eqref{E54} and we get as before
\begin{align*}
K^8 _{ij} & = \inty{\mathcal{A}_i \nry \ue _j (t) \cdot R[b_i,D]\,^t R \nry \ue _j (t)} \\
& - \inty{b _i \cdny ( \nry \ue _j (t)) \cdot R[b_i, D]\,^t R \nry \ue _j (t)}\\
& = \inty{\mathcal{A}_i \nry \ue _j (t) \cdot R[b_i, D]\,^t R \nry \ue _j (t)}\\
& + \frac{1}{2}\inty{(\divy b_i) R[b_i, D]\,^t R: \nry \ue _j (t) \otimes \nry \ue _j (t)}\\
& + \frac{1}{2}\inty{b_i \cdny (R[b_i, D]\,^t R) : \nry \ue _j (t) \otimes \nry \ue _j (t)}
\end{align*}
implying that there is a constant $C_7$ (not depending on $\eps$ or $t$) such that
\begin{align*}
\sum _{j = 1} ^m \sum _{i = 1} ^m  |K_{ij} ^8| & \leq C_7 \|\nry \otimes \nry \ue (t)\| _{\lty} ^ 2,\;\;t \in \R_+, \;\;\eps >0.
\end{align*}
Putting together \eqref{E61}, \eqref{E62} and the estimates for all the terms $K_{ij} ^r, i,j \in \{1, ...,m\}, r \in \{1, ...,8\}$ we deduce that 
\begin{align*}
& \frac{1}{2}\frac{\md }{\md t} \|\nry \otimes \nry \ue \|^2 _{\lty} + \alpha \|\nry \otimes \nry \otimes \nry \ue \|^2 _{\lty}\leq C \| \nry \otimes \nry \ue \| _{\lty}  \\
& \times \left (\| \nry \otimes \nry \ue \| _{\lty} + \|  \nry \ue \| _{\lty}     \right ),\;C = \sum _{r = 3} ^7 C_r.
\end{align*}
Applying Gronwall's lemma yields, for some constant $C_T$ depending only on $T$ and the coefficients $\alpha_{ij}^k$, the vector fields $b_i$ and the matrix field $D$
\begin{align*}
\|\nry \otimes \nry \ue \|_{L^\infty([0,T];\lty)}  & = \|\nrz \otimes \nrz \ve \|_{L^\infty([0,T];\lty)} \\
& \leq C_T ( \|\nry \uin \|_{\lty} + \|\nry \otimes \nry \uin \|_{\lty} ),\;\;\eps >0
\end{align*}
and
\begin{align*}
\|\nry \otimes \nry \otimes \nry \ue \|_{L^2([0,T];\lty)} & = \|\nrz \otimes \nrz \otimes \nrz \ve \|_{L^2([0,T];\lty)}  \\
& \leq C_T ( \|\nry \uin \|_{\lty}  + \|\nry \otimes \nry \uin \|_{\lty} ),\eps >0.
\end{align*}
For estimating $\partial _t \nrz \ve$ we take the directional derivative $b_i \cdnz$ in \eqref{DerTimeVeps}. As the vector fields $b_i, b$ are in involution, that is $[b_i,b] = 0$, the flows $Y_i, Y$ are commuting \cite{Arnold78, Arnold89}, and therefore the derivative along $b_i$ commutes with the translation along the flow of $b$ (take the derivative with respect to $h$, at $h = 0$, of the equality $f(Y(s;Y_i(h;\cdot))) = f(Y_i (h; Y(s;\cdot)))$\;). We deduce that
\[
\partial _t ( b_i \cdnz \ve ) (t,z) = [b_i \cdny \divy (D \nabla _y \ue )](\ytz)
\]
which implies 
\begin{align*}
\|\partial _t \nrz \ve (t)\|_{\lty} & = \|\nry \divy (D \nabla _y \ue (t))\|_{\lty} \\
& = \|\nry \divy ( D \,^t R \nry \ue (t))\|_{\lty} \\
& = \|\nry (\divy (RD) \cdot \nry \ue (t)) + \nry ( R D \,^t R : \nry \otimes \nry \ue (t)) \|_{L^2}.
\end{align*}
We claim that for any $i \in \{1, ...,m\}$ we have the equality 
\begin{equation}
\label{EquDivbi}
\divy (RD)_i = \sum _{j =1 } ^m b_j \cdny (RD\,^t R)_{ij} + \sum _{j =1 } ^m (RD\,^t R)_{ij} \divy b_j.
\end{equation}
Indeed, for any $i \in \{1, ...,m\}$ we can write (here $(e_k)_{1\leq k \leq m}$ stands for the canonical basis of $\R^m$)
\begin{align*}
(\divy RD)_i & = \divy (RD \,^t R \,^t R ^{-1})_i = \sum_{k = 1} ^m \partial _{y_k} \sum _{j=1} ^m (RD\,^t R)_{ij} R^{-1}_{kj}\\
& = \sum _{j=1} ^m \sum_{k = 1} ^m \partial _{y_k} [(RD\,^t R)_{ij} (b_j \cdot e_k)] \\
& = \sum _{j=1} ^m \divy [ (RD\,^t R)_{ij} b_j ].
\end{align*}
Thanks to our hypotheses and formula \eqref{EquDivbi}, it is easily seen that there is a constant depending only on the coefficients $\alpha _{ij} ^k$, the vector fields $b_i$ and the matrix field $D$ such that 
\begin{align*}
\|\partial _t \nrz \ve \|_{\lty} & \leq C  (\|\nry \ue (t) \|_{\lty} + \|\nry \otimes \nry \ue (t) \|_{\lty} \\
& + \|\nry \otimes \nry \otimes \nry \ue (t)\|_{\lty}    ).
\end{align*}
Thanks to the uniform estimates satisfied by $\nry \ue, \nry \otimes \nry \ue$ in $L^\infty([0,T];\lty)$, and by $\nry \otimes \nry \otimes \nry \ue $ in $L^2([0,T];\lty)$, we obtain that, for any $T \in \R_+$, there is a constant $\tilde{C}_T$ (depending only on $T$ and $\alpha$, $\alpha _{ij} ^k, b_i, D$) such that 
\[
\|\partial _t \nrz \ve \|_{L^2([0,T];\lty)} \leq \tilde{C}_T \left ( \|\nry \uin \|_{\lty} + \|\nry \otimes \nry \uin \|_{\lty} \right ),\;\;\eps >0.
\]
\end{proof}

\begin{proof}( of Proposition \ref{MoreEstimatesv})\\
We perform exactly the same computations as in the proof of Proposition \ref{MoreEstimates} (it does not matter that \eqref{Equ14} has no the term $\frac{1}{\eps} b \cdnz v$). Nevertheless, we have to check that all the hypotheses on the matrix field $D$ in Proposition \ref{MoreEstimates} are also satisfied by the matrix field $\ave{D}$. By Theorem \ref{AveMatField} we deduce that $^t\ave{D} = \ave{D}$, $\ave{D} \in H_Q \cap H_Q ^\infty$, $|\ave{D}|_Q \leq |D|_Q,|\ave{D}|_{H_Q ^\infty} \leq |D|_{H_Q ^\infty}$, $Q^{1/2} \ave{D} Q^{1/2} \geq \alpha I_m$ and therefore the hypotheses \eqref{E45}, \eqref{E46} corresponding to the matrix field $\ave{D}$ hold true. We also need to show that $R[b_i,\ave{D}]\,^t R \in \liy$, $\divy (R \ave{D}) \in \liy$, $\sum _{i = 1} ^m b_i \cdny (R[b_i, \ave{D}]\,^t R) \in \liy$, provided that the same conditions are satisfied by the matrix field $D$. The key point is that for any $i \in \{1, ...,m\}$, the groups $(G_i (h))_{h \in \R}$, $(G(s))_{s \in \R}$ are commuting, where $G_i (h)$ is defined by
\[
G_i (h) A = \partial Y_i ^{-1}(h;\cdot) A(Y_i(h;\cdot))\;^t \partial Y_i ^{-1} (h;\cdot).
\]
It is easily seen that for any $s, h \in \R$
\begin{align}
\label{E65}
(G(s) \circ G_i (h))A & = G(s) (G_i (h)A) = \partial Y^{-1} (s; \cdot)(G_i (h)A)(\ys)\,^t\partial Y^{-1} (s; \cdot)  \\
& = \partial Y^{-1} (s; \cdot)\partial Y_i^{-1} (h; \ys)A(Y_i(h;\ys)) \,^t \partial Y_i^{-1} (h; \ys)\,^t\partial Y^{-1} (s; \cdot)  \nonumber 
\end{align}
and
\begin{align}
\label{E66}
(G_i(h) \circ G (s))A & = G_i(h) (G (s)A) = \partial Y_i^{-1} (h; \cdot)(G (s)A)(Y_i(h;\cdot))\,^t\partial Y_i^{-1} (h; \cdot)  \\
& = \partial Y_i^{-1} (h; \cdot)\partial Y^{-1} (s; Y_i(h;\cdot))A(Y(s;Y_i(h;\cdot))) \,^t \partial Y^{-1} (s; Y_i(h;\cdot))\,^t\partial Y_i ^{-1} (h; \cdot).  \nonumber 
\end{align}
By the involution between $b$ and $b_i$, we know that 
\begin{equation}
\label{E67} Y_i (h; \ys) = Y(s; Y_i (h;\cdot))
\end{equation}
and by differentiation one gets
\[
\partial Y_i (h; \ys) \partial \ys = \partial Y (s; Y_i (h;\cdot)) \partial Y_i (h;\cdot)
\]
which also writes
\begin{equation}
\label{E68}
\partial Y ^{-1} (s; \cdot) \partial Y_i ^{-1} (h; \ys) = \partial Y_i ^{-1} (h; \cdot) \partial Y ^{-1} (s;Y_i (h;\cdot)).
\end{equation}
Combining \eqref{E65}, \eqref{E66}, \eqref{E67}, \eqref{E68} we obtain the commutation property between the groups $(G_i(h))_{h \in \R}, (G(s))_{s \in \R}$, for any $i \in \{1, ...,m\}$. Notice that the hypothesis $R [b_i, D]\,^t R \in \liy$, or equivalently $[b_i,D] \in H_Q^\infty$, should be understood in $\dpri$, that is,  there is a matrix field, denoted $[b_i,D]$, which belongs to $H_Q^\infty$, such that for any $A \in C^1 _c (\R^m)$ we have
\begin{equation}
\label{E69}
\inty{D:(- b_i \cdny A - (\divy b_i )A - \,^t \partial b_i A - A \partial b_i )} = \inty{[b_i,D]:A}.
\end{equation}
We introduce the operator $L_i (D) = [b_i,D]$ and its formal adjoint
\[
L_i ^\star (A) = - b_i \cdny A - (\divy b_i )A - \,^t \partial b_i A - A \partial b_i,\;A \in C^1 _c (\R^m).
\]
Using these notations, \eqref{E69} becomes
\begin{equation*}
\label{E70} \inty{D:L_i ^\star (A)} = \inty{L_i (D) :A },\;\;A \in C^1 _c (\R^m).
\end{equation*}
Notice that $H_Q^\infty$ is the topological dual of the space 
\[
H_P^1 = \left \{A:\R^m \to \mathcal{M}_m (\R) \mbox{ measurable }\;:\; \inty{(P(y)A(y) : A(y)P(y))^{1/2}} < +\infty \right \}
\]
and thus $[b_i, D] \in H_Q^\infty$ iff there is a constant $C_i$ such that $\inty{D:L_i ^\star (A) } \leq C_i |A|_{H_P^1}$ for any $A \in C^1 _c(\R^m)$. A straightforward computation shows that if $B \in H_Q ^\infty$ is such that $G_i(h)B \in H_Q ^\infty$ for any $h \in \R$ and $h \in \R^\star \to (G_i(h)B - B)/h$ is bounded in $H_Q^\infty$, then $(G_i(h)B - B)/h$ converges toward $L_i (B)$ weakly $\star$ in $H_Q^\infty$, when $h \to 0$. Indeed, for any $A \in C^1 _c (\R^m) \subset H_P^1$, we have
\begin{align*}
& \inty{\frac{G_i(h)B  - B}{h}:A}  = \inty{\frac{\partial Y_i ^{-1} (h;y) B(Y_i (h;y))\,^t \partial Y_i ^{-1} (h;y) - B(y)}{h}: A(y)}\\
& = \inty{\frac{\partial Y_i (-h;Y_i (h;y))B(Y_i(h;y))\,^t \partial Y_i (-h;Y_i (h;y))- B(y)}{h} : A(y)} \\
& = \frac{1}{h} \intz{\left \{\mathrm{det}(\partial Y_i (-h;z)) \partial Y_i (-h;z)B(z) \,^t \partial Y_i (-h;z): A(Y_i (-h;z)) - B(z) :A(z)\right \}}\\
& = \intz{\frac{\mathrm{det}(\partial Y_i (-h;z)) - 1}{h}\partial Y_i (-h;z)B(z) \,^t \partial Y_i (-h;z): A(Y_i (-h;z))} \\
& + \intz{  B(z) :\frac{^t \partial Y_i (-h;z) A(Y_i(-h;z))\partial Y_i (-h;z) - A(z)}{h}}\\
& \underset{h\to 0}{\longrightarrow} - \intz{\divz b_i \;B(z) : A(z)} - \intz{B(z) : ( b_i \cdnz A + \,^t \partial b_i A + A \partial b_i )}\\
& = \intz{B(z):L_i ^\star (A)}. 
\end{align*}
We deduce that any weak $\star$ limit point in $H_Q^\infty$ satisfies
\[
\inty{\mbox{w}\star \lim _{h_k \to 0} \frac{G_i (h_k)B - B}{h_k} :A } = \inty{B:L_i ^\star (A)}
\]
for any $A \in C^1 _c (\R^m)$. Therefore all the family $(G_i (h) B - B)/h$ converges weakly $\star$ in $H_Q ^\infty$, as $h\to 0$, and $L_i(B) = \lim_{h \to 0} \frac{G_i(h)B - B}{h}$, weakly $\star$ in $H_Q^\infty$. 

\noindent
We claim that for any $s\in \R$, we have $L_i (G(s)D) = G(s)(L_i(D))$, that is
\begin{equation}
\label{E71} \inty{G(s)D : L_i ^\star (A)} = \inty{G(s)L_i (D) : A},\;\;A \in C^1 _c (\R^m).
\end{equation}
By density arguments (notice that $B_n \rightharpoonup B$ weakly $\star$ in $H_Q^\infty$, implies $G(s)B_n \rightharpoonup G(s)B$ weakly $\star$ in $H_Q^\infty$, for any $s \in \R$) it is enough to show that $L_i (G(s)B)= G(s)L_i(B)$ for any smooth, compactly supported matrix field $B$. Let us consider a smooth, compactly supported matrix field $B$. Obviously, for any $h \in \R^\star$ we have $G_i (h)B \in H_Q ^\infty$ and $\frac{G_i (h)B-B}{h} \rightharpoonup  L_i(B)$ weakly $\star$ in $H_Q ^\infty$, when $h \to 0$. We deduce that $G_i (h)G(s)B = G(s)G_i (h)B \in H_Q ^\infty$ for any $h \in \R$ and
\[
\frac{G_i (h)G(s)B - G(s)B}{h} = \frac{G(s)G_i (h)B - G(s)B}{h} = G(s)\frac{G_i (h)B - B}{h} \underset{h \to 0}{\rightharpoonup} G(s)L_i (B)
\]
weakly $\star$ in $H_Q^\infty$. By the previous remark, we obtain $L_i(G(s)B) = G(s)L_i(B)$ for any $s \in \R$. Now it is easily seen that $L_i (\ave{D}) = [b_i,\ave{D}] \in H_Q^\infty$, if $L_i (D) = [b_i,D] \in H_Q ^\infty$. Indeed, 
averaging \eqref{E71} with respect to $s$ one gets
\[
\inty{\frac{1}{S} \int _0 ^S G(s)D\;\md s : L_i ^\star (A)} = \inty{\frac{1}{S}\int _0 ^S G(s)L_i (D) \;\md s :A},\;\;A \in C^1 _c (\R^m).
\]
Taking into account that $\frac{1}{S}\int _0 ^S G(s)D \;\md s \to \ave{D}$ in $H_Q$, when $S \to +\infty$, that $PL_i ^\star (A) P \in H_Q$ and noticing that
\[
\left | \frac{1}{S}\int _0 ^S G(s)L_i(D)\;\md s \right|_{H_Q^\infty} \leq \frac{1}{S}\int _0 ^S |G(s)L_i(D)|_{H_Q ^\infty}\;\md s = |L_i (D)|_{H_Q ^\infty}
\]
we deduce that any weak $\star$ limit point in $H_Q^\infty$ of $\left ( \frac{1}{S}\int _0 ^S G(s)L_i (D)\;\md s \right ) _S$ satisfies
\[
\inty{\ave{D}:L_i ^\star (A)} = \inty{\mbox{w}\star \lim _{S \to +\infty}\frac{1}{S}\int _0 ^S G(s)L_i(D)\;\md s : A}
\]
for any $A \in C^1 _c (\R^m) \subset H^1_P$, saying that $L_i (\ave{D})= \lim _{S \to +\infty}\frac{1}{S}\int _0 ^S G(s)L_i(D)\;\md s$, weakly $\star$ in $H_Q^\infty$. In particular $|L_i (\ave{D})|_{H_Q^\infty} \leq |L_i (D) |_{H_Q^\infty}$. 

\noindent
We concentrate now on the hypothesis $\sum _{i = 1} ^m b_i \cdny (R[b_i,D]\,^t R) \in \liy$. For any $s \in \R$ we have
\[
RG(s)[b_i,D]\,^t R = R \partial Y^{-1}(s;\cdot) [b_i,D](\ys) \,^t \partial Y^{-1}(s;\cdot) \,^t R = (R[b_i,D]\,^t R)(\ys)
\]
and since $[b_i,b] = 0$, we obtain
\[
b_i \cdnz (RG(s)[b_i,D]\,^t R ) = b_i \cdnz ((R[b_i,D]\,^t R) \circ \ys ) = \left (b_i \cdny (R[b_i,D]\,^t R ) \right )\circ \ys.
\]
Multiplying by a smooth compactly supported matrix field $A \in C^1 _c (\R^m)$ and averaging  with respect to $s$ one gets 
\begin{align*}
- \intz{ \frac{\divz b_i}{S} \int _0 ^S G(s)L_i (D) \;\md s & : \,^t R AR}  - \intz{\frac{1}{S} \int _0 ^S G(s)L_i (D)\;\md s :\,^t R (b_i \cdnz A)R} \\
& = \intz{\frac{1}{S}\int _0 ^S (b_i \cdny (RL_i (D) \,^t R))(\ysz) \;\md s :A(z)}.
\end{align*}
We use now the weak $\star$ convergence in $H_Q^\infty$
\[
\limS \frac{1}{S} \int _0 ^S G(s)L_i (D) \;\md s = L_i (\ave{D})
\]
and the facts that $^tRAR\in H^1_P$, $\divy b_i \in \liy$ implying that 
\begin{align}
\label{E79}
- \intz{\sum _{i = 1} ^m \divz b_i & \, R L_i (\ave{D}) \,^t R  : A}   - \intz{\sum _{i = 1} ^m RL_i (\ave{D})\,^t R:b_i \cdnz A} \\
& = \intz{\lim_{S\to +\infty} \frac{1}{S}\int _0 ^S \sum _{i = 1} ^m (b_i \cdny (RL_i (D) \,^t R))(\ysz) \;\md s :A(z)}.\nonumber 
\end{align}
For passing to the limit in the last integral we use the weak $\star$ convergence in $\liy{}$, since the family $\left ( \sum _{i = 1} ^m (b_i \cdny (RL_i (D) \,^t R))\circ \ys  \right ) _{s \in \R}$ is bounded in $\liy{}$. We deduce that 
\[
\sum _{i = 1} ^m b_i \cdny (RL_i (\ave{D}) \,^t R) = \mathrm{w}\star \lim_{S\to +\infty} \frac{1}{S}\int _0 ^S \sum _{i = 1} ^m (b_i \cdny (RL_i (D) \,^t R))\circ \ys \;\md s \in \liy{}.
\]
It remains to observe that for any $s \in \R$, $\divz (RG(s)D) = \divy (RD)\circ \ys$ (see \eqref{DivRG} for details), which implies
\[
\divz(R\ave{D}) = \mathrm{w}\star \lim _{S\to +\infty} \frac{1}{S}\int _0 ^S \divy(RD) \circ \ys \;\md s \in \liy
\]
where, as before, the last limit should be understood in the weak $\star \;\liy$ sense.
\end{proof}

\subsection*{Acknowledgments}
This work has been carried out within the framework of the EUROfusion Consortium and has received funding from the Euratom research and training programme 2014-2018 under grant agreement No 633053. The views and opinions expressed herein do not necessarily reflect those of the European Commission.

\end{document}